\renewcommand\part{%
   \if@noskipsec \leavevmode \fi
   \par
   \addvspace{4ex}%
   \@afterindentfalse
   \secdef\@part\@spart}
\def\@part[#1]#2{%
    \ifnum \c@secnumdepth >\m@ne
      \refstepcounter{part}%
      \addcontentsline{toc}{part}{\thepart\hspace{1em}#1}%
    \else
      \addcontentsline{toc}{part}{#1}%
    \fi
    {\parindent \z@ \raggedright
     \interlinepenalty \@M
     \normalfont
\center{
     \ifnum \c@secnumdepth >\m@ne
       \large\bfseries \partname\nobreakspace\thepart: 
     \fi
     \large \bfseries #2}%
     \par}%
    \nobreak
    \vskip 3ex
    \@afterheading}
\def\@spart#1{%
    {\parindent \z@ \raggedright
     \interlinepenalty \@M
     \normalfont
     \huge \bfseries #1\par}%
     \nobreak
     \vskip 3ex
     \@afterheading}
\newcommand{\bF}{\mathbb{F}}
\newcommand{\bN}{\mathbb{N}}
\newcommand{\bQ}{\mathbb{Q}}
\newcommand{\bZ}{\mathbb{Z}}
\newcommand{\bk}{\mathbbm{k}}
\newcommand\lra{\longrightarrow}
\newcommand\colim{\operatorname*{colim}}
\newcommand\hocolim{\operatorname*{hocolim}}
\newcommand\holim{\operatorname*{holim}}
\newcommand\Coker{\operatorname*{Coker}}
\newcommand\Ker{\operatorname*{Ker}}
\newcommand{\Hom}{\mathrm{Hom}}
\newcommand{\Ext}{\mathrm{Ext}}
\renewcommand{\epsilon}{\varepsilon}
\newcommand{\GL}{\mathrm{GL}}
\newcommand{\SL}{\mathrm{SL}}
\newcommand{\ul}{\underline}
\newcommand{\K}{\mathrm{K}}
\newcommand{\SK}{\mathrm{SK}}
\newcommand{\et}{\text{{\'e}t}}
\mathchardef\ordinarycolon\mathcode`\:
\theoremstyle{plain}
\newtheorem{MainThm}{Theorem}
\newtheorem{MainCor}[MainThm]{Corollary}
\newtheorem{theorem}{Theorem}[section]
\newtheorem{proposition}[theorem]{Proposition}
\newtheorem{lemma}[theorem]{Lemma}
\newtheorem{corollary}[theorem]{Corollary}
\theoremstyle{definition}
\theoremstyle{remark}
\newtheorem{remark}[theorem]{Remark}
\newtheorem*{remark*}{Remark}
\newtheorem{example}[theorem]{Example}
\newtheorem*{example*}{Example}
\newtheorem{question}{Question}
\numberwithin{equation}{section}
\title{Stable cohomology of congruence subgroups}
\author{Oscar Randal-Williams}
\email{o.randal-williams@dpmms.cam.ac.uk}
\address{Centre for Mathematical Sciences\\
Wilberforce Road\\
Cambridge CB3 0WB\\
UK}
\subjclass[2010]{11F75, 19F99, 20J06}
\begin{document}
\begin{abstract}
We describe the $\bF_p$-cohomology of the congruence subgroups $\SL_n(\bZ, p^m)$ in degrees $* < p-1$, for all large enough $n$, establishing a formula proposed by F.~Calegari. Along the way, we also establish a formula for the stable cohomology of $\SL_n(\bZ/p)$ with certain twisted coefficients.
\end{abstract}
\maketitle

\setcounter{tocdepth}{1}
\tableofcontents

\section*{Introduction}

\subsection*{Stable cohomology of congruence subgroups}
Let $p$ be an odd prime number. We will be concerned with the $\bF_p$-cohomology of the level $p^m$ congruence subgroups
$$\SL_n(\bZ, p^m) := \Ker(\SL_n(\bZ) \to \SL_n(\bZ/p^m))$$
with $m \geq 1$, in a range of cohomological degrees which is stable in two senses: certainly $n$ should be large compared with the cohomological degree, but $p$ should be too. Our main result is expressed in terms of the completed cohomology 
$$\widetilde{H}^*(\SL_n;\bF_p) := \colim_m H^*(\SL_n(\bZ, p^m);\bF_p)$$ 
of Calegari and Emerton \cite{CESurvey}, and should be considered as extending some of the ideas developed by Calegari \cite{Calegari} to higher cohomological degrees.

\begin{MainThm}\label{thm:mainCong}
Let $p$ be odd. In degrees $* < p-1$ and for all large enough $n$ there is an isomorphism
$$H^*(\SL_n(\bZ, p^m);\bF_p) \cong \Lambda_{\bF_p}^*[sl_n(\bF_p)^\vee] \otimes \widetilde{H}^*(\SL_n;\bF_p)$$
of $\bF_p$-algebras and of $\SL_n(\bZ/p^m)$-representations.
\end{MainThm}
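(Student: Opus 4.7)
The plan is to analyse the Lyndon--Hochschild--Serre spectral sequence associated to the extension
$$1 \to \SL_n(\bZ, p^{m+1}) \to \SL_n(\bZ, p^m) \xrightarrow{\pi} sl_n(\bF_p) \to 1,$$
where $sl_n(\bF_p) \cong \SL_n(\bZ, p^m)/\SL_n(\bZ, p^{m+1})$ is viewed as an elementary abelian $p$-group. This takes the form
$$E_2^{s,t} = H^s(sl_n(\bF_p); H^t(\SL_n(\bZ, p^{m+1}); \bF_p)) \Longrightarrow H^{s+t}(\SL_n(\bZ, p^m); \bF_p).$$
First I would show that the coefficient system is trivial in the range of interest. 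On the abelianisation $\SL_n(\bZ, p^{m+1})/\SL_n(\bZ, p^{m+2}) \cong sl_n(\bF_p)$ the conjugation action is trivial, since $(I+p^m A)(I+p^{m+1} B)(I+p^m A)^{-1} \equiv I + p^{m+1} B \pmod{p^{m+2}}$ (using $2m+1 \geq m+2$ for $m \geq 1$); propagating this to all $H^t$ in the stable range for $n$ is where the formula for the stable cohomology of $\SL_n(\bZ/p)$ with twisted coefficients (advertised in the abstract) would enter.

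With untwisted coefficients $E_2 \cong H^*(sl_n(\bF_p); \bF_p) \otimes H^*(\SL_n(\bZ, p^{m+1}); \bF_p)$, and for $p$ odd
$$H^*(sl_n(\bF_p); \bF_p) \cong \Lambda^*[sl_n(\bF_p)^\vee] \otimes \mathrm{Sym}^*[\beta sl_n(\bF_p)^\vee],$$
with symmetric generators $\beta x \in E_2^{2,0}$ of degree $2$. The core of the argument is the identification of the transgression $d_2$. The fibre $H^1(\SL_n(\bZ, p^{m+1}); \bF_p)$ contains a distinguished copy of $sl_n(\bF_p)^\vee$ coming from the next-level quotient $\pi' : \SL_n(\bZ, p^{m+1}) \to sl_n(\bF_p)$, and I would show, by computing the extension class of the above short exact sequence in $H^2(sl_n(\bF_p); sl_n(\bF_p))$ directly from the matrix commutator structure, that $d_2$ carries this fibre copy isomorphically onto $\beta sl_n(\bF_p)^\vee \subset E_2^{2,0}$. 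By the Koszul multiplicative structure of the spectral sequence, the sub-DGA generated by these fibre exterior classes together with their base Bockstein images is acyclic in positive degrees, so $E_3$ reduces to
$$E_3 \cong \Lambda^*[sl_n(\bF_p)^\vee] \otimes \bigl( H^*(\SL_n(\bZ, p^{m+1}); \bF_p) \big/ \langle \text{next-level exterior classes} \rangle \bigr).$$

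Iterating this step across all levels $m' \geq m$, or equivalently passing to the colimit $m \to \infty$ in a coherent family of such spectral sequences, the residual factor stabilises to $\widetilde{H}^*(\SL_n; \bF_p)$: by construction the exterior classes at each finite level die on restriction to deeper congruence subgroups, so the direct limit agrees with the definition of completed cohomology. The degree restriction $* < p$ is crucial here: it ensures both that no higher differentials $d_r$ for $r \geq 3$ can contribute (their potential targets lie in degrees $\geq p$) and that no nontrivial multiplicative extensions occur. The $\SL_n(\bZ/p^m)$-equivariance of the isomorphism is built in from the naturality of every step.

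The main obstacle I anticipate is the precise determination of the transgression $d_2$: showing that the fibre and base copies of $sl_n(\bF_p)^\vee$ pair via the ``identity'' class demands a careful universal computation of the extension class in $H^2(sl_n(\bF_p); sl_n(\bF_p))$, and controlling its interaction with the Bockstein. A secondary obstacle is making rigorous the claim that the iterated residual cohomology is exactly $\widetilde{H}^*(\SL_n; \bF_p)$ --- coordinating the spectral sequences across all congruence levels into a single collapse statement is likely where the twisted-coefficients theorem for $\SL_n(\bZ/p)$ does the most work.
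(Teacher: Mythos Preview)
Your approach is genuinely different from the paper's, and as it stands it has real gaps.

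\textbf{The paper's route.} The paper does \emph{not} iterate the one-step extension $1 \to \SL_n(\bZ,p^{m+1}) \to \SL_n(\bZ,p^m) \to sl_n(\bF_p) \to 1$. Instead it uses the completed-cohomology spectral sequence
\[
{^{II}}E_2^{s,t} = H^s_{cts}(\SL_n(\bZ_p,p^m)) \otimes \widetilde{H}^t(\SL_n(\bZ)) \Longrightarrow H^{s+t}(\SL_n(\bZ,p^m)),
\]
whose base is the $p$-adic analytic group $\SL_n(\bZ_p,p^m)$. Because this group is uniform pro-$p$, its continuous $\bF_p$-cohomology is \emph{already} the exterior algebra $\Lambda^*[sl_n(\bF_p)^\vee]$ --- no symmetric generators to kill, no iteration needed. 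The completed cohomology $\widetilde{H}^*$ is identified with $H^*(\mathrm{hofib}(\Omega^\infty\kappa))$ via $K$-theory, and is shown to be freely generated by classes that are transgressive in the $m=0$ version of this spectral sequence. Collapse for $m\geq 1$ then follows from a single vanishing statement: the restriction $H^*_{cts}(\SL_n(\bZ_p)) \to H^*_{cts}(\SL_n(\bZ_p,p^m))$ is zero in degrees $0<*<p$. The extension problem is handled by showing $H^1(\SL_n(\bZ/p);\Lambda^t[sl_n(\bF_p)^\vee])=0$ stably, using the functor-homology input.

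\textbf{Gaps in your proposal.} First, the iteration is not a proof: to compute $H^*(\SL_n(\bZ,p^m))$ you invoke $H^*(\SL_n(\bZ,p^{m+1}))$, which you do not know; there is no base case, and ``passing to the colimit in a coherent family of spectral sequences'' is a hope, not an argument. Second, your claim that ``no higher differentials $d_r$ for $r\geq 3$ can contribute (their potential targets lie in degrees $\geq p$)'' is false: $d_r$ raises total degree by one, so there are plenty of potential $d_3,d_4,\ldots$ landing in total degree $<p$, and your $E_3$-page $\Lambda^*[sl_n(\bF_p)^\vee]\otimes(\text{unknown quotient of }H^*(\SL_n(\bZ,p^{m+1})))$ gives you no control over them. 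Third, even the $d_2$ calculation is delicate for $m=1$: the extension $\SL_n(\bZ,p)/\SL_n(\bZ,p^3)$ is not abelian, and the $2$-cocycle $c(A,B)=AB$ (plus determinant corrections) is not purely the Bockstein class --- the paper's own analogous differential calculation (for a different spectral sequence) occupies several pages and uses the functor-homology description of $H^2(\SL_n(\bF_p);(V\otimes V^\vee)^{\otimes t})$ in an essential way.
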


In Remark \ref{rem:StabRangeThmA} we explain that $n \geq 2p+4$ suffices.

Here $sl_n(\bF_p)$ denotes the vector space of traceless $n \times n$ matrices, considered as an $\SL_n(\bZ/p^m)$-representation via $\SL_n(\bZ/p^m) \to \SL_n(\bZ/p)$ and the adjoint action. That the dimensions of the cohomology groups $H^i(\SL_n(\bZ, p^m);\bF_p)$ are eventually polynomial in $n$, and admit a systematic description, is the fact that they satisfy representation stability \cite[Theorems 1.5 and 1.6]{CEFN}. The point of Theorem \ref{thm:mainCong} is that it identifies what the ``representation stable'' cohomology is, at least in degrees $* < p-1$. Earlier results we are aware of that calculate the (co)homology of $\SL_n(\bZ, p)$ in a stable range of degrees are those of Lee--Szczarba \cite[Theorem 1.1]{LeeSzczarba} in degree $*=1$, and of Calegari \cite[Corollary 4.4]{Calegari} in degrees $* \leq 2$, whose methods we extend here.

Crucial to the proof of this theorem, as well as to its applications, is that completed cohomology satisfies homological stability with respect to $n$ \cite{CEStability}, and the relation between the stable completed cohomology and the fibre of the $p$-adic completion map in algebraic $K$-theory \cite{Calegari}. More precisely, if
$$\kappa : \SK(\bZ ; \bZ_p) \lra \SK(\bZ_p ; \bZ_p)$$
is the map induced by $p$-adic completion $\bZ \to \bZ_p$ on (the 1-connected cover of) $p$-adic $K$-theory, then Calegari proves that
$$\widetilde{H}^*(\SL;\bF_p) \cong H^*(\Omega^\infty \mathrm{hofib}(\kappa) ; \bF_p).$$
Combined with deep results in algebraic $K$-theory, this can be used to evaluate completed cohomology for $p$ a regular prime, leading to the following formula.

\begin{MainCor}\label{cor:mainReg}
Let $p$ be an odd regular prime. Then in degrees $* < p-1$ and for all large enough $n$ there is an isomorphism
$$H^*(\SL_n(\bZ, p^m);\bF_p) \cong \Lambda_{\bF_p}^*[sl_n(\bF_p)^\vee] \otimes \bF_p[x_2, x_6, x_{10}, \ldots]$$
of $\bF_p$-algebras and of $\SL_n(\bZ/p^m)$-representations.
\end{MainCor}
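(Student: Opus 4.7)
The plan is to combine Theorem \ref{thm:mainCong} with the identification $\widetilde{H}^*(\SL;\bF_p) \cong H^*(\Omega^\infty \mathrm{hofib}(\kappa); \bF_p)$ recalled in the introduction, and then to pin down the homotopy type of $F := \mathrm{hofib}(\kappa)$ using the regularity of $p$. Since Theorem \ref{thm:mainCong} already supplies the exterior factor $\Lambda^*_{\bF_p}[sl_n(\bF_p)^\vee]$ in degrees $* < p$, the whole task reduces to computing the $\bF_p$-algebra $H^{<p}(\Omega^\infty F;\bF_p)$ and to checking that the $\SL_n(\bZ/p^m)$-action on it is trivial.

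I would first determine $\pi_* F$. Using Borel's rank computation together with the vanishing of $p$-torsion in $K_*(\bZ)$ at odd regular primes (Soul\'e, Voevodsky--Rost), one has $\pi_i \SK(\bZ;\bZ_p) = \bZ_p$ for $i \in \{5, 9, 13, \ldots\}$ and $0$ otherwise in degrees $\geq 2$. By B\"okstedt--Madsen and Hesselholt--Madsen, $\pi_i \SK(\bZ_p;\bZ_p) = \bZ_p$ in every odd degree $\geq 3$ and $0$ otherwise. The crucial arithmetic input, which is where the regularity enters, is that $\pi_{4k+1}(\kappa)$ is an isomorphism for every $k \geq 1$: under Quillen--Lichtenbaum both sides are identified with $H^1_{et}(-;\bZ_p(2k+1))$, and the localisation map $H^1_{et}(\bZ[1/p]; \bZ_p(2k+1)) \to H^1_{et}(\bZ_p; \bZ_p(2k+1))$ is an isomorphism precisely under the regularity hypothesis. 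Running the long exact sequence of the fibration then gives
\[
\pi_i F \cong \begin{cases} \bZ_p & i = 4k+2,\ k \geq 0, \\ 0 & \text{otherwise}. \end{cases}
\]

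Because $\Map_{Sp}(H\bZ_p, \Sigma^j H\bZ_p) \simeq 0$ for $j > 0$, all Postnikov $k$-invariants of $F$ vanish, so $F \simeq \bigvee_{k \geq 0}\Sigma^{4k+2} H\bZ_p$ and $\Omega^\infty F \simeq \prod_{k \geq 0}K(\bZ_p, 4k+2)$. The map $K(\bZ, d) \to K(\bZ_p, d)$ is an $\bF_p$-cohomology equivalence since $\bZ_p/\bZ$ is uniquely divisible. In the range $* < p$, Serre's computation of $H^*(K(\bZ, 2d);\bF_p)$ collapses to the truncation $\bF_p[\iota_{2d}]^{<p}$, as the Steenrod operations $\St^i$ and their Bocksteins live in degrees $\geq 2(p-1) > p$ for $p$ odd. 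Taking the product and setting $x_{4k+2} := \iota_{4k+2}$ delivers the claimed $\bF_p[x_2, x_6, x_{10}, \ldots]$ in degrees $< p$; triviality of the $\SL_n(\bZ/p^m)$-representation structure on this polynomial algebra is immediate from the description via $\Omega^\infty F$, which involves no matrix group at all. The main obstacle is the arithmetic input that $\pi_{4k+1}(\kappa)$ is an isomorphism at regular odd primes; once this is granted, everything else is a standard consequence of the structure of $\bF_p$-cohomology of Eilenberg--MacLane spaces in the range $* < p$.
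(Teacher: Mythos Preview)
Your overall strategy is sound and reaches the right conclusion, but one stated claim is false as written: it is \emph{not} true that $\Map_{Sp}(H\bZ_p, \Sigma^j H\bZ_p) \simeq 0$ for all $j > 0$. Steenrod operations furnish nontrivial maps once $j \geq 2p-2$; the correct statement (which the paper proves as a lemma in Section~\ref{sec:IrregPrimesSS}) is that $[HA, \Sigma^i HB]=0$ for $1 < i < 2p-2$ when $A,B$ are finitely-generated $\bZ_p$-modules. Fortunately this range suffices: the $k$-invariants needed to split $\tau_{< p} F$ all have degree at most $p+2 < 2p-2$ (for $p \geq 5$; for $p=3$ only $\pi_2 F$ is relevant). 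So the argument is easily repaired by restricting the vanishing claim.

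Your route differs from the paper's. Rather than compute $\pi_* F$ and then split the Postnikov tower, the paper invokes Rognes's identification (at odd regular primes) $\K(\bZ;\bZ_p) \simeq j \oplus \Sigma^5 ko$ together with the B\"okstedt--Madsen splitting $\K(\bZ_p;\bZ_p) \simeq j \oplus \Sigma j \oplus \Sigma^3 ku$, under which $\kappa$ is the identity on the $j$-summand and is complexification $\Sigma^5 ko \to \Sigma^5 ku$ followed by Bott multiplication $\Sigma^5 ku \to \Sigma^3 ku$ on the rest. Since $j$ has the $(2p-3)$-type of $H\bZ_p$, in degrees $< 2p-2$ the homotopy fibre is identified outright with $\Omega^\infty \Sigma^2 ko$, whose $\bF_p$-cohomology in that range is the polynomial algebra $\bF_p[x_2, x_6, x_{10}, \ldots]$. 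This packages all of the arithmetic (regularity) into the single citation of Rognes, and in particular bypasses any separate verification that $\pi_{4k+1}(\kappa)$ is an isomorphism; your \'etale-cohomology justification of that step is plausible but only sketched. Your approach is in fact closer to what the paper does at \emph{irregular} primes (Section~\ref{sec:IrregPrimesSS}), where no clean spectrum-level splitting of $\K(\bZ;\bZ_p)$ is available and one is forced to argue via homotopy groups and vanishing of $k$-invariants.
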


Recall that an odd prime is called \emph{regular} if it does not divide the numerator of a Bernoulli number. We will explain in Sections \ref{sec:CompletionMap} and \ref{sec:LFunction} that regularity simplifies matters in two ways: firstly $K_{4i+2}(\bZ;\bZ_p)=0$ if $p$ is regular, and secondly the map $\kappa_*: \bZ_p \cong K_{4i+1}(\bZ ; \bZ_p) \to K_{4i+1}(\bZ_p ; \bZ_p) \cong \bZ_p$ is an isomorphism if $p$ is regular. Together these facts cause $H^*(\Omega^\infty \mathrm{hofib}(\kappa) ; \bF_p)$ to have the simple description appearing in Corollary \ref{cor:mainReg}. The situation can also be analysed at irregular primes: we will justify the following examples in Section \ref{sec:LFunction}.

\begin{example*}
For $p=37$ we have
$$H^*(\SL_n(\bZ, p^m);\bF_p) \cong \Lambda_{\bF_p}^*[sl_n(\bF_p)^\vee] \otimes \bF_p[x_2, x_6, x_{10}, \ldots] \otimes \bF_p[y_8] \otimes \Lambda^*_{\bF_p}[y_9]$$
in degrees $* < 36$ for all large enough $n$.
\end{example*}

\begin{example*}
For $p=16843$ we have
$$H^*(\SL_n(\bZ, p^m);\bF_p) \cong \Lambda_{\bF_p}^*[sl_n(\bF_p)^\vee] \otimes \bF_p[x_2, x_6, x_{10}, \ldots] \otimes \bF_p[y_4] \otimes \Lambda^*_{\bF_p}[y_5]$$
in degrees $* < 16842$ for all large enough $n$.
\end{example*}

\begin{example*}
For $p=2124679$ we have
\begin{align*}
H^*(\SL_n(\bZ, p^m);\bF_p) \cong \Lambda_{\bF_p}^*[sl_n(\bF_p)^\vee] &\otimes \bF_p[x_2, x_6, x_{10}, \ldots] \otimes \bF_p[y_4] \otimes \Lambda^*_{\bF_p}[y_5] 
\end{align*}
in degrees $* < 1403794$ for all large enough $n$.
\end{example*}

For completeness, in Section \ref{sec:AwayFromChar} we describe the analogue of Theorem \ref{thm:mainCong} with coefficients coprime to $p$, whose statement is simpler and whose proof is much simpler.

\subsection*{Strategy}
The general strategy for proving Theorem \ref{thm:mainCong} is the same as \cite[Section 3]{Calegari}. The theory of completed cohomology provides a spectral sequence
$$E_2^{s,t} = H^s_\mathrm{cts}(\SL_n(\bZ_p, p^m) ; \bF_p) \otimes \widetilde{H}^t(\SL_n;\bF_p) \Longrightarrow H^{s+t}(\SL_n(\bZ, p^m) ; \bF_p),$$
and the theory of $p$-adic analytic groups gives an identification
$$H^*_\mathrm{cts}(\SL_n(\bZ_p, p^m) ; \bF_p) \cong \Lambda_{\bF_p}^*[sl_n(\bF_p)^\vee]$$
of $\bF_p$-algebras and of $\SL_n(\bZ/p^m)$-representations. One must then show that this spectral sequence collapses at $E_2$ in degrees $* < p-1$, and that it has no nontrivial extensions either multiplicatively or as $\SL_n(\bZ/p^m)$-representations. This is what we shall do.

That a statement like Corollary \ref{cor:mainReg} could be true we learnt from a talk given by Calegari at BIRS in October 2021 \cite{CalegariTalk}. Based on heuristics including that Corollary \ref{cor:mainReg} should be true, Calegari presented a conjectural formula for the cohomology of the finite groups $\SL_n(\bZ/p)$ with coefficients in certain modular representations (coming from representations of the algebraic group $\SL_n$), and suggested that such  formula could be useful in approaching results like Corollary \ref{cor:mainReg}. The second thing we do in this paper is to prove this conjectural formula (in Theorem \ref{thm:mainFin} below). We will not directly use this formula to prove Theorem \ref{thm:mainCong}, but we will use many of the same ingredients that go into proving it. We formulate it in the following section.

\subsection*{Stable twisted cohomology of $\SL_n(k)$}
In this section we work not just with $\bF_p$ but with a finite field $k$ of characteristic $p$. We work throughout with $k$-modules, and in particular form all tensor products over $k$. If $V$ is a finite-dimensional $k$-module with dual $V^\vee$ and coevaluation map $coev: k \to V \otimes V^\vee$, then we may form the quotient $V_{[n,m]}$ of $V^{\otimes n} \otimes (V^\vee)^{\otimes m}$ by the subspace spanned by inserting coevaluations in all possible ways. The group $\Sigma_n \times \Sigma_m$ acts on $V^{\otimes n} \otimes (V^\vee)^{\otimes m}$ by permuting the factors, and this action descends to $V_{[n,m]}$. For partitions $\lambda \vdash n$ and $\mu \vdash m$ with associated Specht modules $S^\lambda$ and $S^\mu$, we define
$$S_{\lambda, \mu}(V) := \Hom_{k[\Sigma_n \times \Sigma_m]} (S^\lambda \otimes S^\mu, V_{[n,m]}).$$
This is a $\GL(V)$-representation.

The formula proposed by Calegari is then as follows. Form the graded algebra $\mathrm{Sym}^\bullet(V \otimes V^\vee)$, where $V \otimes V^\vee$ is placed in degree 2, and let $X^\bullet$ denote its quotient by the ideal generated by the $\GL(V)$-invariant elements\footnote{We will see (in Lemma \ref{lem:invariants}) that the invariant elements form a polynomial algebra $k[c_1, c_2, c_3, \ldots]$ in a range of degrees increasing with $\dim(V)$, with $|c_i|=2i$.}.

\begin{MainThm}\label{thm:mainFin}
For all partitions $\lambda \vdash n$ and $\mu \vdash m$ with  $n+m \leq \tfrac{p+1}{2}$, there is an isomorphism
$$H^i(\SL(V) ; S_{\lambda, \mu}(V)) \cong [S_{\lambda, \mu}(V) \otimes X^i]^{\SL(V)}$$
in degrees $i < 2p$, as long as $\dim(V)$ is large enough.
\end{MainThm}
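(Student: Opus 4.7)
The plan is to reduce the statement to a computation in the category of strict polynomial bifunctors over $k$, using the framework of Friedlander-Suslin and its stable refinement due to Franjou-Friedlander-Scorichenko-Suslin.

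First I would reduce the problem from $\SL(V)$ to $\GL(V)$. Since $|k^\times|$ is coprime to $p$, the Lyndon-Hochschild-Serre spectral sequence for $1 \to \SL(V) \to \GL(V) \to k^\times \to 1$ collapses, and the $k^\times$-action lets one decompose both sides of the claimed isomorphism as direct sums indexed by characters $\chi = \det^j$ of $k^\times$. It therefore suffices to establish, for each such character $j$,
$$H^i(\GL(V); S_{\lambda,\mu}(V) \otimes \det^j) \cong [S_{\lambda,\mu}(V) \otimes X^i \otimes \det^j]^{\GL(V)}$$
in the same range. This reduction is clean because the finite abelian quotient is order-coprime to the residue characteristic, so no derived functors intervene.

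Next, I would identify this left-hand side with an Ext group in the category of strict polynomial bifunctors over $k$. By the stability results of Franjou-Friedlander-Scorichenko-Suslin, in the stable range (i.e.\ for $\dim(V)$ large) the cohomology $H^*(\GL(V); S_{\lambda,\mu}(V) \otimes \det^j)$ agrees with the corresponding Ext group in the bifunctor category, and the hypothesis $n + m \leq (p+1)/2$ ensures that the bifunctor has polynomial degree low enough that Frobenius twists do not contribute in cohomological degrees below $2p$. I would compute this Ext via the natural Koszul-type resolution attached to the symmetric algebra $\mathrm{Sym}^\bullet(V \otimes V^\vee)$ and its $\GL(V)$-invariant ideal of Chern classes (whose explicit description is the content of the footnoted Lemma \ref{lem:invariants}); the bound $i < 2p$ is precisely what forces the associated spectral sequence to degenerate, producing $[S_{\lambda,\mu}(V) \otimes X^i \otimes \det^j]^{\GL(V)}$.

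The main obstacle lies in the second step: carrying out the Ext computation with enough precision to establish the degeneration of the relevant spectral sequence in the range $i < 2p$, and matching the answer on the nose with the invariant-theoretic right-hand side rather than merely up to associated graded. Once this is accomplished, the reduction from $\SL$ to $\GL$ and the assembly of the final formula from the pieces indexed by characters $\det^j$ are essentially formal, following from the structure of $X^\bullet$ as the quotient of the symmetric algebra by its $\GL(V)$-invariants.
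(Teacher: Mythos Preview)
Your proposal has a genuine gap: you have misidentified the role of the hypothesis $n+m \leq \tfrac{p+1}{2}$. It has nothing to do with bounding polynomial degree or suppressing Frobenius twists. In the paper the statement for tensor powers,
\[
H^i(\GL(V); V^{\otimes n}\otimes (V^\vee)^{\otimes m}) \cong [V^{\otimes n}\otimes (V^\vee)^{\otimes m}\otimes X^i]^{\GL(V)}
\]
in degrees $i<2p$, is proved with \emph{no} condition on $n+m$ at all (Sections~\ref{sec:FunctorHomology}--\ref{sec:wBr} and Corollary~\ref{cor:IdOnBrauerCat}). The condition $n+m\leq \tfrac{p+1}{2}$ enters only in the passage from tensor powers to $V_{[n,m]}$ and hence to $S_{\lambda,\mu}(V)$: it guarantees (Proposition~\ref{prop:Semisimplicity}, via semisimplicity of the walled Brauer algebra $\mathcal{B}_{n,m}(\delta)$ for $\delta\equiv\tfrac{p+1}{2}\bmod p$) that the quotient map $V^{\otimes n}\otimes (V^\vee)^{\otimes m}\to V_{[n,m]}$ is $\GL(V)$-equivariantly split. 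Without that splitting you cannot deduce the statement for $S_{\lambda,\mu}(V)$ from the statement for tensor powers, and your proposal contains no mechanism for handling this.

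A second, less fatal, issue is that your ``Koszul-type resolution'' and its spectral sequence are never specified, so the claimed degeneration below degree $2p$ is an assertion rather than an argument. The paper's route is more concrete: it computes $\Ext^*_{\GL}(I^{\otimes T},I^{\otimes S})$ explicitly as $\Gamma_k[x]^{\otimes S}\otimes k\{\mathrm{Bij}(T,S)\}$ using functor homology (ordinary functors, not strict polynomial ones), organises this as a functor on the upward walled Brauer category, and then shows directly that the sequence $c_1,c_2,\ldots$ acts regularly on both $\mathrm{Sym}^\bullet(I\otimes I^\vee)$ and on $H^*(\GL; I^{\otimes S}\otimes (I^\vee)^{\otimes T}\otimes \mathrm{Sym}^\bullet(I\otimes I^\vee))$ in degrees $<p$, so that the quotient by the $c_i$ commutes with taking cohomology. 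This gives an identification of both sides for tensor powers by matching bases, after which the semisimplicity argument handles the passage to $V_{[n,m]}$. Finally, your reduction from $\SL$ to $\GL$ via a character decomposition is more elaborate than necessary: in the stable range the $k^\times$-action on $H^*(\SL(V);V_{[S,T]})$ is already trivial, so only the trivial character contributes.
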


In Remark \ref{rem:StabRangeThmC} we explain that $\dim(V) \geq 2i+2+n+m$ suffices.

\begin{remark}
The analogous statement for $\GL(V)$ instead of $\SL(V)$ holds too, and is in fact what we shall focus on: the statement for $\SL(V)$ will follow because the $k^\times$-action on $H^*(\SL(V) ; V_{[n,m]})$ is trivial in a stable range.
\end{remark}

\subsection*{Questions} Theorem \ref{thm:mainCong} immediately suggests some avenues for further research. Some of the most obvious are:

\begin{question}
How does $H^*(\SL_n(\bZ, p^m) ; \bF_p)$ behave beyond the range $* < p-1$?
\end{question}

\begin{question}
If $\mathcal{O}_K$ is the ring of integers in a number field, and $\mathfrak{p} \subset \mathcal{O}_K$ is a prime ideal lying over $(p) \subset \mathbb{Z}$, is there a corresponding description of $H^*(\SL_n(\mathcal{O}_K, \mathfrak{p}^m) ; \bF_p)$ in a stable range?
\end{question}

One should read \cite[Section 4]{Calegari} to get started with this.

\begin{question}
What is the analogous result for other congruence subgroups of arithmetic groups, such as $\mathrm{Sp}_{2g}(\bZ, p^m)$?
\end{question}

\subsection*{Leitfaden} The reader interested only in the proof of Theorem \ref{thm:mainCong} may read Sections \ref{sec:Stability}--\ref{sec:wBr} and then skip to Section \ref{sec:Recollections}.

\subsection*{Acknowledgements} I thank the organisers of the BIRS workshop \emph{Cohomology of Arithmetic Groups: Duality, Stability, and Computations} (21w5011) for hosting such an intradisciplinary workshop, and F.\ Calegari for presenting the questions addressed here in his talk, providing detailed notes on it afterwards, and his helpful correspondence. I am grateful to C.\ Vespa for her guidance through the literature involved in Section \ref{sec:FunctorHomology}, and to the referees for their extremely useful input.

This material is based upon work supported by the Swedish Research Council under grant no.\ 2016-06596 while I was in residence at Institut Mittag-Leffler in Djursholm, Sweden during the semester \emph{Higher algebraic structures in algebra, topology, and geometry}. I was supported by the ERC under the European Union’s Horizon 2020 research and innovation programme (grant agreement No.\ 756444), and by a Philip Leverhulme Prize from the Leverhulme Trust.

\part{Stable twisted cohomology of $\SL_n(k)$}

In this part of the paper we work over a finite field $k$ of characteristic $p$.

\section{Stability considerations}\label{sec:Stability}

We will say that a cohomological statement about $\GL(V)$ or $\SL(V)$ holds ``in a stable range of degrees'' if it holds in a range of cohomological degrees tending to infinity with $\dim(V)$. For homological stability we will use the work of van der Kallen \cite{vanderkallen}.

\subsection{Coefficient systems}\label{sec:CoeffSys}

The modules $V$ and $V^\vee$ for $\GL(V)$ both fit into the setting of \cite[\S 5]{vanderkallen}. We recall this setting, in modernised language,  as follows. There is a category $\mathsf{VIC}(k)$ whose objects are the finite-dimensional $k$-modules, and whose morphisms from $V$ to $W$ are given by a linear injection $f : V \to W$ along with a choice of subspace $U \leq W$ complementary to $f(V)$. A \emph{coefficient system} will for us be defined  to be a functor
$$F : \mathsf{VIC}(k) \lra k\text{-Mod}.$$
Unravelling definitions, this provides a ``strongly central coefficient system'' in the sense of \cite[\S 5.2]{vanderkallen}. Here are two examples. The first is $I(V) := V$, with its functoriality given on a morphism $(f,U) : V \to W$ by $I(f,U) = f : V \to W$. The second is $I^\vee(V) := V^\vee$,  given on a morphism $(f,U) : V \to W$ by 
$$I^\vee(f,U) : V^\vee \overset{f^\vee}{\underset{\sim}\longleftarrow} f(V)^\vee \overset{inc}\lra f(V)^\vee \oplus U^\vee =  W^\vee.$$
From these, for finite sets $S$ and $T$ we can form coefficient systems $I^{\otimes S} \otimes (I^\vee)^{\otimes T}$, whose values at $V$ are of course $V^{\otimes S} \otimes (V^\vee)^{\otimes T}$. (Here the tensor product is, of course, taken over $k$.) Under the shifting operation $\Sigma$ on coefficient systems \cite[\S 5.3]{vanderkallen} we have a decomposition
\begin{equation}\label{eq:SplitCoeffSys}
\Sigma (I^{\otimes S} \otimes (I^\vee)^{\otimes T}) \cong \bigoplus_{\substack{A \subseteq S \\ B \subseteq T}} I^{\otimes A} \otimes (I^\vee)^{\otimes B},
\end{equation}
from which it quickly follows that $I^{\otimes S} \otimes (I^\vee)^{\otimes T}$ has degree $|S|+|T|$ in the sense of \cite[\S 5.5]{vanderkallen}. 

The evaluation maps yield commutative squares
\begin{equation*}
\begin{tikzcd}
V \otimes V^\vee \ar[rr, "ev_V"] && k \ar[dd, equals]\\
V \otimes f(V)^\vee \ar[u, swap, "id \otimes f^\vee"] \dar{f \otimes inc}\\
(f(V) \oplus U)\otimes(f(V)^\vee \oplus U^\vee) \ar[rr, "ev_{f(V) \oplus U}"] && k
\end{tikzcd}
\end{equation*}
giving a morphism of coefficient systems $ev : I \otimes I^\vee \to k$, the target being the constant coefficient system. We can therefore form the coefficient systems
$$I^{[S,T]} := \Ker\left(I^{\otimes S} \otimes (I^\vee)^{\otimes T} \to \bigoplus_{\substack{s \in S \\ t \in T}} I^{\otimes S-s} \otimes (I^\vee)^{\otimes T-t}\right).$$
The decomposition \eqref{eq:SplitCoeffSys} restricts to a decomposition $\Sigma(I^{[S,T]}) \cong \bigoplus_{{A \subseteq S, B \subseteq T}} I^{[A,B]}$, so $I^{[S,T]}$ also has degree $|S|+|T|$.

\begin{example}
The coefficient system $I^{[1,1]}$ associates to $V$ the vector space
$$sl(V) := \Ker(ev_V : V \otimes V^\vee \lra k).$$
When $V = k^n$ we will sometimes denote this $sl_n(k)$ instead.
\end{example}

We write $V^{[S,T]}$ for the value of the functor $I^{[S,T]}$ at $V$, given of course by the kernel $\Ker\left(V^{\otimes S} \otimes (V^\vee)^{\otimes T} \to \bigoplus_{\substack{s \in S, t \in T}} V^{\otimes S-s} \otimes (V^\vee)^{\otimes T-t}\right)$.

\subsection{Stability}\label{sec:stab}

By \cite[Theorem 5.6]{vanderkallen} (using that we can take $sdim=0$ as $k$ is a field \cite[\S 2.2]{vanderkallen}) it follows that for fixed finite sets $S$ and $T$ the maps
\begin{align*}
H_i(\SL(V) ; V^{\otimes S} \otimes (V^\vee)^{\otimes T}) &\lra H_i(\SL(V \oplus k) ; (V\oplus k)^{\otimes S} \otimes ((V\oplus k)^\vee)^{\otimes T})\\
H_i(\SL(V) ; V^{[S,T]}) &\lra H_i(\SL(V \oplus k) ; (V \oplus k)^{[S,T]})
\end{align*}
are isomorphisms as long as $2i \leq \dim(V)-2-(|S|+|T|)$. We will not keep track of explicit stability ranges, and will just use the fact that this range of degrees diverges with $\dim(V)$ for $S$ and $T$ fixed.

We can obtain a similar result on cohomology, by dualising. Recalling that in the introduction we defined
$$V_{[S,T]} = \Coker\left(\bigoplus_{\substack{s \in S \\ t \in T}} V^{\otimes S-s} \otimes (V^\vee)^{\otimes T-t} \to V^{\otimes S} \otimes (V^\vee)^{\otimes T}\right),$$
there is an isomorphism $V^{[T,S]} \overset{\sim}\to V_{[S,T]}^\vee$ induced by the (swapped) evaluation map $V^{\otimes S} \otimes (V^\vee)^{\otimes T} \overset{\sim}\to \mathrm{Hom}_k(V^{\otimes T} \otimes (V^\vee)^{\otimes S}, k)$. By the Universal Coefficient Theorem the maps
\begin{align*}
H^i(\SL(V \oplus k) ; (V\oplus k)^{\otimes T} \otimes ((V\oplus k)^\vee)^{\otimes S}) \lra & H^i(\SL(V) ; V^{\otimes T} \otimes (V^\vee)^{\otimes S}) \\
H^i(\SL(V \oplus k) ; (V \oplus k)_{[T,S]}) \lra & H^i(\SL(V) ; V_{[T,S]}) 
\end{align*}
are also isomorphisms in a stable range of degrees. Note that these maps are induced by the inclusions $\SL(V) \to \SL(V \oplus k)$ and the maps $(V\oplus k)^{\otimes T} \otimes ((V\oplus k)^\vee)^{\otimes S} \to V^{\otimes T} \otimes (V^\vee)^{\otimes S}$ \emph{dual} to the maps considered on homology (i.e.\ induced by projection $V \oplus k \to V$ on the first tensor factors and restriction $(V \oplus k)^\vee \to V^\vee$ on the second).

\subsection{SL vs.\ GL}\label{sec:SLvsGL}
The extension 
\begin{equation}\label{eq:ext}
1 \lra \SL(V) \lra \GL(V) \overset{\det}\lra k^\times \lra 1
\end{equation}
induces a $k^\times$-action on $H^*(\SL(V) ; V_{[S,T]})$.

\begin{lemma}\label{lem:ActTrivOnSL}
This action is trivial in a stable range of degrees.
\end{lemma}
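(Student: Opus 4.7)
The plan is to absorb the outer automorphism of $\SL(V)$ given by $\lambda \in k^\times$ into an inner automorphism of $\SL(V \oplus k)$, and then conclude using the cohomological stability isomorphism of Section \ref{sec:stab}.

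First I would fix the set-theoretic splitting $\lambda \mapsto g_\lambda := \diag(\lambda, 1, \ldots, 1) \in \GL(V)$ of \eqref{eq:ext}, so that the $k^\times$-action of $\lambda$ on $H^*(\SL(V); V_{[S,T]})$ is the one induced by $g_\lambda$ acting simultaneously on $\SL(V)$ by conjugation and on the coefficients by its natural $\GL(V)$-action. The crucial observation is that $g_\lambda$ admits a natural lift to $\tilde{g}_\lambda := \diag(\lambda, 1, \ldots, 1, \lambda^{-1}) \in \SL(V \oplus k)$. Because $\tilde{g}_\lambda$ lies in $\SL(V \oplus k)$ itself, conjugation by $\tilde{g}_\lambda$ is inner and, combined with its natural action on $(V \oplus k)_{[S,T]}$, induces the identity on $H^*(\SL(V \oplus k); (V \oplus k)_{[S,T]})$.

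The remaining step is to check that the stability map
$$H^*(\SL(V \oplus k); (V \oplus k)_{[S,T]}) \lra H^*(\SL(V); V_{[S,T]})$$
from Section \ref{sec:stab} is equivariant for the $\tilde{g}_\lambda$-action on the source and the $g_\lambda$-action on the target. This reduces to two direct verifications: conjugation by $\tilde{g}_\lambda$ on $\SL(V\oplus k)$ preserves the subgroup $\SL(V) = \SL(V) \oplus \{1\}$ (the lower-right entry $\lambda^{-1} \cdot 1 \cdot \lambda$ stays equal to $1$) and restricts there to conjugation by $g_\lambda$; and the projection $V \oplus k \to V$ used to define the stability map on coefficients is $(\tilde{g}_\lambda, g_\lambda)$-equivariant, hence so is the induced map $(V \oplus k)_{[S,T]} \to V_{[S,T]}$.

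In the stable range the stability map is an isomorphism, so the trivial $\tilde{g}_\lambda$-action on the source forces the $g_\lambda$-action on the target to be trivial, for every $\lambda \in k^\times$. I do not anticipate a serious obstacle here: the whole argument is by naturality once one spots the lift $\tilde{g}_\lambda \in \SL(V \oplus k)$ and invokes cohomological stability with twisted coefficients, both already in hand.
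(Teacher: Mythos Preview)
Your proof is correct and is essentially the same argument as the paper's, just run in the opposite direction: the paper decomposes $V = W \oplus k$ and realises the $k^\times$-action via $\diag(1_W,\lambda)$, which acts trivially on $\SL(W)$ and on $W$, whereas you enlarge to $V \oplus k$ and realise the action via an element of $\SL(V\oplus k)$ which is therefore inner. Both reduce to the same observation---that the outer $k^\times$-action becomes trivial after one stabilisation step---and both conclude by invoking the stability isomorphism of Section~\ref{sec:stab}.
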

\begin{proof}
Choose a decomposition $V=W \oplus k$. The cohomological stability result above applies to show that the natural map
$$H^*(\SL(W \oplus k) ; (W \oplus k)_{[S,T]}) \lra H^*(\SL(W) ; W_{[S,T]})$$
is an isomorphism in a stable range of degrees. For the $k^\times$-action on the source given by conjugation with $\mathrm{diag}(1_W, k^\times)$, and the trivial $k^\times$-action on the target, this map is equivariant, and so in the range in which this map is an isomorphism the $k^\times$-action on the domain is trivial.
\end{proof}

\begin{corollary}\label{cor:GLvSL}
The natural map
$$H^*(\GL(V) ; V_{[S,T]}) \lra H^*(\SL(V) ; V_{[S,T]})$$
is an isomorphism in a stable range of degrees.
\end{corollary}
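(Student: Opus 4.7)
The plan is to combine the preceding lemma with the Lyndon--Hochschild--Serre spectral sequence for the extension \eqref{eq:ext}. Concretely, I would consider
$$E_2^{s,t} = H^s(k^\times; H^t(\SL(V); V_{[S,T]})) \Longrightarrow H^{s+t}(\GL(V); V_{[S,T]}),$$
whose edge homomorphism to $E_\infty^{0,t} \hookrightarrow E_2^{0,t}$, composed with the inclusion of $k^\times$-invariants into $H^t(\SL(V); V_{[S,T]})$, is precisely the restriction map under consideration.

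The key observation is that $k$ is a finite field of characteristic $p$, so $|k^\times| = |k|-1$ is coprime to $p$. Because the coefficients $H^t(\SL(V); V_{[S,T]})$ are $k$-modules, a standard transfer (or Maschke-type averaging) argument gives $H^s(k^\times; M) = 0$ for every $s>0$ and every $k$-module $M$. Hence the spectral sequence is concentrated on the row $s=0$, degenerates at $E_2$, and identifies the restriction map with
$$H^t(\GL(V); V_{[S,T]}) \xrightarrow{\ \cong\ } H^t(\SL(V); V_{[S,T]})^{k^\times} \hookrightarrow H^t(\SL(V); V_{[S,T]}).$$

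Finally, I would invoke the preceding lemma: in a stable range of degrees the $k^\times$-action on $H^*(\SL(V); V_{[S,T]})$ is trivial, so in that range the inclusion of invariants above is an equality, and the restriction map is an isomorphism. There is no serious obstacle in this argument; it is a formal consequence of the lemma together with the coprimality $\gcd(|k^\times|, p)=1$, and all of the nontrivial work has already been carried out in establishing that the conjugation action of $k^\times$ becomes trivial in a stable range.
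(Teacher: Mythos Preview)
Your proposal is correct and follows essentially the same argument as the paper: use the Hochschild--Serre spectral sequence for the extension \eqref{eq:ext}, note that $|k^\times|$ is coprime to $p$ so the spectral sequence collapses to give $H^*(\GL(V);V_{[S,T]}) \cong H^*(\SL(V);V_{[S,T]})^{k^\times}$, and then invoke the preceding lemma to identify the invariants with the full cohomology in the stable range. The paper's proof is just a slightly terser version of what you wrote.
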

\begin{proof}
As the finite field $k$ has characteristic $p$, its units $k^\times$ form a finite group whose order is invertible in $k$. It follows by transfer that $H^s(k^\times ; M)=0$ for $s>0$ and any $k[k^\times]$-module $M$. The spectral sequence for the extension \eqref{eq:ext} takes the form
$$E_2^{s,t} = H^s(k^\times ; H^t(\SL(V) ; V_{[S,T]})) \Longrightarrow H^{s+t}(\GL(V) ; V_{[S,T]})$$
and is therefore supported along the line $s=0$ so collapses to give an isomorphism $H^*(\GL(V) ; V_{[S,T]}) \overset{\sim}\to H^0(k^\times ; H^*(\SL(V) ; V_{[S,T]}))$. However, as the $k^\times$-action is trivial in a stable range of degrees, the latter is $H^*(\SL(V) ; V_{[S,T]})$ in this range.
\end{proof}

\section{Functor homology}\label{sec:FunctorHomology}

Our initial goal is to calculate $H^*(\GL(V) ; V^{\otimes S} \otimes (V^\vee)^{\otimes T})$, which we will do using methods of functor homology. We have attempted to keep the actual use of functor homology in the proofs, and to formulate statements only at the level of group (co)homology. There are no new ideas in the proofs, which simply combine results extracted from the functor homology literature. We are grateful to C.\ Vespa for explaining how to do so.

\subsection{$\Ext$ and products}
We will be interested in various cohomology groups of the form $H^*(\GL(V) ; U \otimes W^\vee)$ where $V$ is a finite-dimensional $k$-vector space and $U$ and $W$ are $k[\GL(V)]$-modules which are finite-dimensional as $k$-modules (typically $U$ and $W$ will be constructed functorially from $V$). It will be convenient to translate between such cohomology groups and $\Ext$-groups over $k[\GL(V)]$. 

For $k[\GL(V)]$-modules $A$ and $B$ we write $\Ext^i_{\GL(V)}(A,B) := \Ext^i_{k[\GL(V)]}(A,B)$, which we consider as the abelian group of morphisms in the derived category of $k[\GL(V)]$-modules from $A$ to the $i$-fold shift $B[i]$ (cf.\ e.g.\ \cite[\S 10.7]{Weibel}). The cocommutative coalgebra structure on the Hopf algebra $k[\GL(V)]$ defines a symmetric monoidal structure $- \otimes_k -$ on $k[\GL(V)]$-modules, which is exact in each variable and so descends to a symmetric monoidal structure on the derived category of $k[\GL(V)]$-modules (which preserves exact triangles in each variable).

Some of this structure can be spelled out as follows. If $A$, $B$, and $C$ are $k[\GL(V)]$-modules, then composition in the derived category yields the \emph{Yoneda product} \cite[\S 2.6]{BensonI}
$$-\circ- : \Ext_{\GL(V)}^i(B, C) \otimes \Ext_{\GL(V)}^j(A, B)  \lra \Ext_{\GL(V)}^{i+j}(A, C).$$
If $D$ is a further $k[\GL(V)]$-module then the symmetric monoidal structure $- \otimes_k -$ induces the \emph{cup product} \cite[\S 3.2]{BensonI}
$$-\otimes- : \Ext_{\GL(V)}^i(A, B) \otimes \Ext_{\GL(V)}^j(C, D) \lra \Ext_{\GL(V)}^{i+j}(A \otimes_k C, B \otimes_k D).$$
It is associative and graded commutative. Its compatibility with the Yoneda product is encoded by the fact that $- \otimes_k -$ defines a bifunctor on the derived category of $k[\GL(V)]$-modules.

As $H^*(\GL(V) ; U \otimes W^\vee) = \Ext^*_{\GL(V)}(k, U \otimes W^\vee)$ we can form the map
$$\Ext^*_{\GL(V)}(k, U \otimes W^\vee) \overset{- \otimes \mathrm{Id}_W}\lra \Ext^*_{\GL(V)}(W, U \otimes W^\vee \otimes W) \overset{(U \otimes ev) \circ -}\lra \Ext^*_{\GL(V)}(W, U)$$
using the evaluation $ev : W^\vee \otimes W \to k$, and this map is an isomorphism (with inverse given by composing $- \otimes \mathrm{Id}_{W^\vee}$ with $- \circ coev$ using $coev : k \to W \otimes W^\vee$).

\subsection{Defining cohomology classes}\label{sec:DefCohClasses}
As an instance of the discussion above, for a finite-dimensional $k$-vector space $V$ we have constructed an isomorphism
$$H^*(\GL(V) ; V \otimes V^\vee) = \Ext_{\GL(V)}^*(k, V \otimes V^\vee) \cong \Ext_{\GL(V)}^*(V, V),$$
and the latter has an associative $k$-algebra structure by the Yoneda product. We define
$$\Ext_{\GL}^*(I, I) := \lim_{V \in \mathsf{VIC}(k)^\mathrm{op}} \Ext_{\GL(V)}^*(V,V),$$
which again has an associative $k$-algebra structure. The limit here is taken over the opposite of the (essentially small) category $\mathsf{VIC}(k)$ from Section \ref{sec:CoeffSys}: a morphism $(f, U) : V \to W$ in this category gives an isomorphism $f \oplus inc : V \oplus U \overset{\sim}\to W$ and so induces a map
$$\Ext_{\GL(W)}^*(W,W) \overset{\text{restriction}}\lra \Ext_{\GL(V)}^*(V \oplus U,V \oplus U) \overset{\text{projection}}\lra \Ext_{\GL(V)}^*(V,V)$$
of associative $k$-algebras. In fact any automorphism $(f,0) : V \to V$ in $\mathsf{VIC}(k)$ acts on $\Ext_{\GL(V)}^*(V,V)$ as the identity---as an instance of the fact that inner automorphisms act trivially on group cohomology \cite[III (8.3)]{Brown}---so the limit is the same if we just take it over the standard split inclusions $0 \to k \to k^2 \to k^3 \to \cdots$. In each cohomological degree this limit is attained at a finite stage, by the stability results of Section \ref{sec:stab}.

\begin{theorem}\label{thm:BokstedtPeriodicity}
There are classes $x^{[i]} \in \Ext_{\GL}^{2i}(I, I)$ such that the map
$$\Gamma_{k}[x] = k\{x^{[0]}, x^{[1]}, x^{[2]}, \ldots\} \lra \Ext_{\GL}^{*}(I, I)$$
is an isomorphism of $k$-algebras from the free divided power algebra on $x = x^{[1]}$.
\end{theorem}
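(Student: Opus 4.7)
The plan is to reduce the stable limit $\Ext^*_{\GL}(I,I) = \lim_V \Ext^*_{\GL(V)}(V,V)$ to a computation in a category of functors from $k$-vector spaces to $k$-modules, where the relevant Ext algebra is classically known. The reduction is an instance of the stable cohomology comparison theorem for bifunctors, and the functor cohomology computation is due to Franjou--Lannes--Schwartz (building on the Friedlander--Suslin calculations in the strict polynomial setting).

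Concretely, I would first identify
$$\Ext^*_{\GL}(I, I) \cong \Ext^*_{\mathcal{F}(k)}(I, I)$$
as graded $k$-algebras, where $\mathcal{F}(k)$ is the category of functors from finite-dimensional $k$-vector spaces to $k$-modules and $I \in \mathcal{F}(k)$ is the identity functor. This identification is the content of Scorichenko's stable cohomology comparison theorem, in the bifunctor form of Franjou--Friedlander--Scorichenko--Suslin (and as generalised and reproved by Djament--Vespa); applied to the bifunctor $(V, W) \mapsto V \otimes W^\vee$ one obtains exactly this isomorphism, and the comparison carries Yoneda product to Yoneda product. I would then invoke the classical functor cohomology computation $\Ext^*_{\mathcal{F}(k)}(I, I) \cong \Gamma_k[x]$ with $|x| = 2$, which supplies both the classes $x^{[i]}$ and the identification of the algebra structure with the free divided power algebra.

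The main obstacle will be the first step: the stable cohomology comparison must be applied in the bifunctor setting, since the coefficient system $V \otimes V^\vee$ mixes a covariant and a contravariant argument, and one must verify that the Yoneda product on $\Ext^*_{\GL(V)}(V, V)$ corresponds under the comparison to the Yoneda product on functor Ext. This is essentially bookkeeping rather than conceptual, but the author's acknowledgement of C.~Vespa suggests that carefully assembling the right statements from the functor cohomology literature is where the real work lies. Once the comparison is in place, the functor cohomology computation and the transport of the divided power algebra structure are standard.
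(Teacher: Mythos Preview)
Your proposal is correct and matches the paper's proof: the paper invokes the comparison isomorphism $\Ext_{\mathsf{F}}^*(F,G) \overset{\sim}{\to} \Ext_{\GL}^*(F,G)$ from \cite[Theorem A.1]{FFSS} or \cite{Betley}, then cites Th\'eor\`eme~7.3 and \S11 of Franjou--Lannes--Schwartz for $\Ext_{\mathsf{F}}^*(I,I) \cong \Gamma_k[x]$. One small simplification: since both arguments are the covariant functor $I$, the paper works entirely in the ordinary (covariant) functor category $\mathsf{F}$ and does not need the bifunctor formalism you invoke---the identification $\Ext^*_{\GL(V)}(V,V) \cong H^*(\GL(V); V \otimes V^\vee)$ is just adjunction, and the comparison theorem for $\Ext_{\mathsf{F}}^*(I,I)$ already delivers what is needed.
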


More generally, for finite sets $S$ and $T$ we define
$$\Ext_{\GL}^*(I^{\otimes S}, I^{\otimes T}) := \lim_{V \in \mathsf{VIC}(k)^\mathrm{op}} \Ext_{\GL(V)}^*(V^{\otimes S},V^{\otimes T})$$
analogously to the above: we wish to determine these groups. The Yoneda product and cup product extend to these, by their naturality. For a function $\ell : S \to \bN$, we can take cup products of the classes in Theorem \ref{thm:BokstedtPeriodicity} to obtain cohomology classes
$$\kappa(\ell) := \bigotimes_{s \in S} x^{[\ell(s)]} \in \Ext_{\GL}^{\sum_{s \in S} 2\ell(s)}(I^{\otimes S}, I^{\otimes S}).$$
Given in addition a bijection $f : T \to S$, the morphisms $V^{\otimes f} : V^{\otimes T} \to V^{\otimes S}$ assemble to an element $[f] \in \Ext_{\GL}^{0}(I^{\otimes T}, I^{\otimes S})$, and we can form the Yoneda product
$$\kappa(\ell, f) := \kappa(\ell) \circ [f] \in \Ext_{\GL}^{\sum_{s \in S} 2\ell(s)}(I^{\otimes T}, I^{\otimes S}).$$
Writing $\mathrm{Bij}(T,S)$ for the set of bijections from $T$ to $S$, and $k\{\mathrm{Bij}(T,S)\}$ for the free $k$-module on this set, this construction defines a map
\begin{equation}\label{eq:FF}
\Psi_{S,T}: \Gamma_{k}[x]^{\otimes S} \otimes k\{\mathrm{Bij}(T,S)\} \lra \Ext_{\GL}^{*}(I^{\otimes T}, I^{\otimes S}).
\end{equation}
We will often write  $\Gamma_{k}[x]^{\otimes S} = \Gamma_{k}[x_s \,|\, s \in S]$.

\begin{theorem}\label{thm:StabCohTensorPowers}
The map $\Psi_{S,T}$ is an isomorphism for all finite sets $S$ and $T$.
\end{theorem}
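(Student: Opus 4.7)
The plan is to reduce the computation to the case $|S|=|T|=1$ handled by Theorem \ref{thm:BokstedtPeriodicity}, by combining a stabilisation/functor-cohomology comparison with a multivariable K\"unneth decomposition.

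First, I would invoke a Scorichenko-type stable comparison (as written up by Djament--Vespa, Touz\'e--van der Kallen and others) to identify
$$\Ext^{*}_{\GL}(I^{\otimes T}, I^{\otimes S}) \cong \Ext^{*}_{\mathcal{P}}(T^T, T^S),$$
where $\mathcal{P}$ denotes the category of strict polynomial functors over $k$ and $T^S(V) := V^{\otimes S}$. Both sides are graded by polynomial degree, so for $|S| \neq |T|$ both sides vanish (on the source because $\mathrm{Bij}(T,S) = \varnothing$, and on the target by homogeneity of strict polynomial $\Ext$).

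Second, assuming $|S|=|T|=n$, I would apply a Pirashvili-style K\"unneth theorem for strict polynomial multifunctors. Viewing $T^n$ as the restriction $\Delta^{*}(I^{\boxtimes n})$ of the external tensor product along the iterated diagonal $\Delta : \mathcal{V} \to \mathcal{V}^{\times n}$, the multivariable K\"unneth formula together with the vanishing of $\Ext^{*}_{\mathcal{P}}(I, I^{\otimes m})$ for $m \neq 1$ (polynomial degree considerations) yields
$$\Ext^{*}_{\mathcal{P}}(T^n, T^n) \cong \bigoplus_{f \in \mathrm{Bij}(T,S)} \bigotimes_{s \in S} \Ext^{*}_{\mathcal{P}}(I, I),$$
where the sum over bijections records which input factor matches to which output factor. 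Finally, Theorem \ref{thm:BokstedtPeriodicity} identifies each factor $\Ext^{*}_{\mathcal{P}}(I, I)$ with $\Gamma_k[x]$, and tracing through the construction shows that $\Psi_{S,T}$ is precisely the composite of this K\"unneth decomposition with the divided-power identification on each factor.

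The principal obstacle is locating and citing the precise published forms of Scorichenko's stable comparison and of the multivariable K\"unneth decomposition applicable in this setting — covariant tensor powers only, $\GL$-cohomology with $k$-linear coefficients, and $k$ a finite field — and verifying their compatibility on the nose with the limit defining $\Ext^{*}_{\GL}$. Once these tools are in hand, the multiplicative bookkeeping (checking that the Yoneda products $\prod_{s} x^{[\ell(s)]}$ correspond to the external products of divided-power classes under the K\"unneth isomorphism) is essentially forced by naturality.
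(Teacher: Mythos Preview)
There is a genuine error in your choice of functor category. You propose to pass to $\Ext^{*}_{\mathcal{P}}$ where $\mathcal{P}$ is the category of \emph{strict polynomial functors} over $k$, but this does not compute the stable cohomology of the finite groups $\GL_n(k)$. The Scorichenko/Betley comparison identifies $\Ext^{*}_{\GL}$ with $\Ext^{*}_{\mathsf{F}}$ for the category $\mathsf{F}$ of \emph{ordinary} functors from finite-dimensional $k$-vector spaces to $k$-vector spaces; the references to Touz\'e--van der Kallen concern rational cohomology of the algebraic group $\GL_n$, which is a different object. Concretely, in $\mathcal{P}$ the tensor-power functor $\otimes^n$ is both projective and injective (it is $\Gamma^{(1,\dots,1)} = S^{(1,\dots,1)}$), so $\Ext^{>0}_{\mathcal{P}}(\otimes^n,\otimes^n)=0$ and $\Ext^{0}_{\mathcal{P}}(\otimes^n,\otimes^n)\cong k[\Sigma_n]$; in particular $\Ext^{*}_{\mathcal{P}}(I,I)=k$ in degree~$0$, not $\Gamma_k[x]$. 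Thus your appeal to Theorem~\ref{thm:BokstedtPeriodicity} to identify $\Ext^{*}_{\mathcal{P}}(I,I)$ with $\Gamma_k[x]$ is incorrect, and the whole computation collapses.

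If you replace $\mathcal{P}$ by $\mathsf{F}$ throughout, the skeleton of your argument becomes essentially the paper's: the comparison $\Ext^{*}_{\GL}\cong\Ext^{*}_{\mathsf{F}}$ is \cite[Theorem A.1]{FFSS} or \cite{Betley}; the vanishing for $|S|\neq|T|$ is not ``homogeneity'' (there is no weight grading on $\mathsf{F}$) but Pirashvili's cancellation lemma; and the K\"unneth-type decomposition for $|S|=|T|$ is \cite[Corollary~1.8]{FFSS}, again using Pirashvili cancellation to discard the cross terms. So the shape of your plan is right, but the category you landed in is the wrong one.
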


By the stability discussion in Section \ref{sec:stab} the map
$$\Ext_{\GL}^{*}(I^{\otimes T}, I^{\otimes S}) \lra \Ext_{\GL(V)}^{*}(V^{\otimes T}, V^{\otimes S}) \cong H^*(\GL(V) ; V^{\otimes S} \otimes (V^\vee)^{\otimes T})$$
is an isomorphism in a stable range of degrees, so Theorem \ref{thm:StabCohTensorPowers} determines the latter cohomology groups in a stable range.

\subsection{Proof of Theorems \ref{thm:BokstedtPeriodicity} and \ref{thm:StabCohTensorPowers}}
The proofs of these theorems are, at least implicitly, available in the literature on functor homology; we explain how to extract them, taking the paper of Franjou--Friedlander--Scorichenko--Suslin \cite{FFSS} as our main reference.

Let $\mathsf{F}$ denote the category of functors from finite-dimensional $k$-modules to $k$-modules, with $\Hom_\mathsf{F}(F,G)$ the $k$-module of natural transformations from $F$ to $G$. This is an abelian category with a set of projective generators, so one may do homological algebra in this category. In particular one may form $\Ext_\mathsf{F}^*(-,-)$ as the derived functors of $\Hom_\mathsf{F}(-,-)$.

For each $V \in \mathsf{VIC}(k)$ there is a functor
$$F \mapsto F(V) : \mathsf{F} \lra k[\GL(V)]\text{-}\mathsf{mod}$$
which is exact, so induces a map $\Ext_{\mathsf{F}}^*(F,G) \to \Ext_{\GL(V)}^{*}(F(V), G(V))$ for each $V$. Taking the limit over $\mathsf{VIC}(k)^\mathrm{op}$, these assemble into a map 
$$\Ext_{\mathsf{F}}^*(F,G) \lra \Ext_{\GL}^{*}(F, G),$$
which by \cite[Theorem A.1]{FFSS} or \cite{Betley} is an isomorphism.

Taking $F$ and $G$ to be the ``identity'' functor $I$, combining Théorème 7.3 and Section 11 of \cite{FLS} identifies $\Ext_\mathsf{F}(I,I)$, with its Yoneda product, with the divided power algebra $\Gamma_{k}[x] = k\{x^{[0]}, x^{[1]}, x^{[2]}, \ldots\}$. Combined with the previous paragraph this gives Theorem \ref{thm:BokstedtPeriodicity}.

\begin{remark}\label{rem:Conventionx}
The paper \cite{FLS} describes a specific choice of the generator $x$. For Part 2 of this paper, which concerns the case $k=\bF_p = \bZ/p$, it will be convenient for us to (perhaps) change this choice by a unit.

Consider the extension
\begin{equation}\label{eq:WittLift}
1 \lra \SL_n(\bZ/p^2, p) \lra \SL_n(\bZ/p^2) \lra \SL_n(\bZ/p) \lra 1,
\end{equation}
where the right-hand map is reduction modulo $p$, so the left-hand group consists of matrices over $\bZ/p^2$ of determinant 1 which reduce to the identity mod $p$. Elements of $\SL_n(\bZ/p^2, p)$ can be uniquely written in the form $I + p A$ for $A$ an $n \times n$ matrix with entries in $\bZ/p$ (using $p \cdot - : \bZ/p \overset{\sim}\to p\bZ/p^2$), and the identity $\det(I + p A) = 1 + p \cdot \mathrm{tr}(A) \in \bZ/p^2$ shows that $I+pA$ has determinant 1 if and only if $A$ has trace zero. This shows that the function $I + p A \mapsto A :  \SL_n(\bZ/p^2, p) \to sl_n(\bZ/p)$ is a bijection, and furthermore $(I+pA)(I+pB) = I+p(A+B)$, so this function is an isomorphism.

Under this identification, the class of the abelian extension \eqref{eq:WittLift} is an element $e_n \in H^2(\SL_n(\bZ/p) ; sl_n(\bZ/p))$. Writing $V_n = (\bZ/p)^n$, the exact sequence $0 \to sl_n(\bZ/p) \to V_n \otimes V_n^\vee \overset{ev}\to \bZ/p \to 0$ gives
$$\begin{tikzcd}[column sep=4pt]
   H^1(\SL_n(\bZ/p) ; \bF_p) \rar{\partial}
             \ar[draw=none]{d}[name=X, anchor=center]{}
    & H^2(\SL_n(\bZ/p) ; sl_n(\bZ/p)) \ar[rounded corners,
            to path={ -- ([xshift=2ex]\tikztostart.east)
                      |- (X.center) \tikztonodes
                      -| ([xshift=-2ex]\tikztotarget.west)
                      -- (\tikztotarget)}]{dl}[at end]{} \\      
   H^2(\SL_n(\bZ/p) ; V_n \otimes V_n^\vee) \rar{ev_*} & H^2(\SL_n(\bZ/p) ; \bZ/p)
\end{tikzcd}$$
and the outer terms vanish (for $n$ large enough) by the theorem of Quillen \cite{QuillenFiniteFields} so the middle map is an isomorphism. We define $x'_n \in H^2(\SL_n(\bZ/p) ; V_n \otimes V_n^\vee)$ to be the image of \emph{minus} the class $e_n$. One may check that these classes are compatible under stabilisation so give an $x' \in \Ext^2_{\GL}(I, I)$. This class is not zero. If it were, then $e_n$ would vanish for $n$ large enough so the extensions \eqref{eq:WittLift} would be split, but they are not: see \cite[Proposition 0.3]{SahII}. Thus $x' = u \cdot x$ for some unit $u$ in $\bZ/p$, so we can let $(x')^{[i]} := u^i \cdot x^{[i]}$ so that $\Ext^*_{\GL}(I, I) = \Gamma_{\bF_p}[x']$. In Theorem \ref{thm:BokstedtPeriodicity} for $k=\bF_p$ we take this $x'$ for $x$. (A similar normalisation can be made for any finite field $k$, substituting the length 2 Witt vectors $W_2(k)$ for $\bZ/p^2$ in the above discussion.)
\end{remark}

If $|S| \neq |T|$ then Pirashvili's cancellation lemma \cite[Theorem A.1]{BetleyPirashvili} shows that $\Ext_{\mathsf{F}}^*(I^{\otimes T}, I^{\otimes S}) \cong \Ext_{\GL}^{*}(I^{\otimes T}, I^{\otimes S})$ vanishes. If $|S|=|T|$ then \cite[Corollary 1.8]{FFSS}, using Pirashvili's cancellation lemma to neglect most terms, gives an isomorphism
$$\Ext_{\mathsf{F}}^*(I,I)^{\otimes S} \otimes k\{\mathrm{Bij}(T,S)\} \cong \Ext_{\mathsf{F}}^*(I^{\otimes T}, I^{\otimes S}).$$
Combined with the previous two paragraphs, and after checking that the maps which induce this isomorphism agree with those that we have described above, this gives Theorem \ref{thm:StabCohTensorPowers}.

\section{The walled Brauer category}\label{sec:wBr}

A useful bookkeeping device for keeping track of the groups $\Ext_{\GL}^{*}(I^{\otimes T}, I^{\otimes S})$, or the groups $H^*(\GL(V) ; V^{\otimes S} \otimes (V^\vee)^{\otimes T})$, and the various maps between them induced by bijections $S \overset{\sim}\to S'$ or $T' \overset{\sim}\to T$ or (co)evaluations, is to consider the totality of these groups as forming a representation of the \emph{(upward) walled Brauer category}. We define these below, and will only make use of their formal definition.

\subsection{Functoriality on the upward walled Brauer category}\label{sec:uwBr}

The \emph{upward walled Brauer category} $\mathsf{uwBr}$ is the category with objects given by pairs $(S,T)$ of finite sets, and with morphisms $\mathsf{uwBr}((S,T), (U,V))$ given by a pair of injections $f : S \to U$, $g: T \to V$ and well as a bijection $m : U \setminus f(S) \to V \setminus g(T)$. We visualise such morphisms as in the figure below, where the composition is given by gluing such 1-dimensional cobordisms.

\begin{figure}[h]
\begin{center}
\includegraphics[width=6cm]{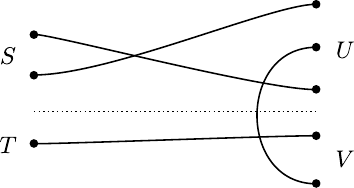}
\end{center}
\end{figure}

The construction $(S, T) \mapsto \Ext_{\GL}^{*}(I^{\otimes T}, I^{\otimes S})$ defines a functor from $\mathsf{uwBr}$ to graded $k$-modules:  a morphism $(f,g,m)$ acts as
$$\Ext_{\GL}^{*}(I^{\otimes T}, I^{\otimes S}) \overset{(f,g)}\lra \Ext_{\GL}^{*}(I^{\otimes g(T)}, I^{\otimes f(S)}) \overset{m}\lra \Ext_{\GL}^{*}(I^{\otimes V}, I^{\otimes U}),$$
where the first map is induced by the bijections $f : S \to f(S)$ and $g : T \to g(T)$, and the second is given by inserting $1 \in \Ext_{\GL}^{0}(I, I)$ along each of the pairs $\{(u, m(u))\}_{u \in U \setminus f(S)}$.

Similarly, the construction $(S,T) \mapsto \Gamma_k[x]^{\otimes S} \otimes k\{\mathrm{Bij}(T,S)\}$ defines a functor from $\mathsf{uwBr}$ to graded $k$-modules: a morphism $(f,g,m)$ acts by sending the element $(\bigotimes_{s \in S} x_s^{[\ell(s)]}) \otimes \phi$ to $(\bigotimes_{u \in U} x_u^{[\ell'(u)]}) \otimes \phi'$ where $\ell'(u)$ is $\ell(s)$ if $u=f(s)$ and is 0 otherwise, and the bijection $\phi' : V \to U$ is equal to $f\circ\phi\circ g^{-1}$ on $g(T) \subset V$ and is equal to $m^{-1}$ on $V \setminus g(T)$.

As $x^{[0]} = 1 \in \Ext_{\GL}^{0}(I, I)$, it follows from these descriptions that the map \eqref{eq:FF} we have described is a natural transformation
$$\Psi : \Gamma_k[x]^{\otimes -} \otimes k\{\mathrm{Bij}(\bullet,-)\} \Longrightarrow \Ext_{\GL}^{*}(I^{\otimes \bullet}, I^{\otimes -}) $$
of functors from $\mathsf{uwBr}$ to graded $k$-modules. Theorem \ref{thm:StabCohTensorPowers} then says that it is in fact a natural isomorphism of such functors.

\subsection{Functoriality on the full walled Brauer category}\label{sec:FullWBr}

The discussion in this section is not needed for the proof of Theorem \ref{thm:mainFin}, but will be used in the proof of Theorem \ref{thm:mainCong}.

For $\delta \in k$ the \emph{walled Brauer category} $\mathsf{wBr}_\delta$ is the $k$-linear category with objects given by pairs $(S,T)$ of finite sets, and with morphisms $\mathsf{wBr}_\delta((S,T), (U,V))$ given by the $k$-vector space with basis given by tuples $(f, g, m, n)$ where $f : S' \to U'$ is a bijection from a subset $S' \subset S$ to a subset $U' \subset U$, $g : T' \to V'$ is a bijection from a subset $T' \subset T$ to a subset $V' \subset V$, and $m : S \setminus S' \to T \setminus T'$ and $n : U \setminus U' \to V \setminus V'$ are bijections. We depict such morphisms as in the figure below, where as shown the composition is given by gluing such 1-dimensional cobordisms, and replacing any circles that are formed by the scalar $\delta \in k$.

\begin{figure}[h]
\begin{center}
\includegraphics[width=12cm]{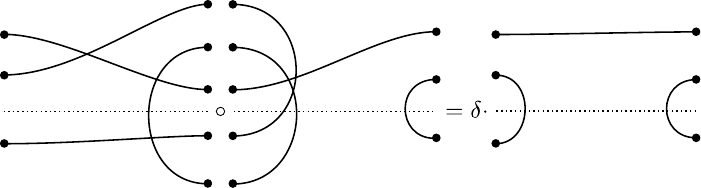}
\end{center}
\end{figure}

For $V$ a \emph{finite-dimensional} vector space, the functor
$$(S,T) \longmapsto H^*(\GL(V) ; V^{\otimes S} \otimes (V^\vee)^{\otimes T})$$
from $\mathsf{uwBr}$ to graded $k$-modules extends to a functor on $\mathsf{wBr}_\delta$ with $\delta := \dim(V)$. This extension is induced by the evaluation maps $ev: V \otimes V^\vee \to k$. Concretely, the morphism
$$(inc : S \setminus s \to S, inc : T \setminus t \to T, \emptyset \overset{\sim}\to\emptyset, \{s\} \overset{\sim}\to \{t\}) : (S,T) \lra (S \setminus s, T \setminus t)$$
gives the map induced on cohomology by the morphism of coefficient systems 
$$\epsilon_{s,t} : V^{\otimes S } \otimes (V^\vee)^{\otimes T} \lra V^{\otimes S \setminus s} \otimes (V^\vee)^{\otimes T \setminus t}$$ which evaluates the $s$th and $t$th terms. Using that the composition $k \overset{coev}\to V \otimes V^\vee \overset{ev}\to k$ is multiplication by $\dim(V)=\delta$, it is elementary to verify that there is a unique extension to a functor on $\mathsf{wBr}_\delta$ given in this way on such morphisms.

Composing the maps $\Psi_{S,T}$ from \eqref{eq:FF} with evaluation at $V$ gives maps
$$\psi_{S,T} : \Gamma_{k}[x]^{\otimes S} \otimes k\{\mathrm{Bij}(T,S)\} \lra H^*(\GL(V) ; V^{\otimes S} \otimes (V^\vee)^{\otimes T}),$$
which are isomorphisms in a stable range by Theorem \ref{thm:StabCohTensorPowers}, and which are natural transformations of functors on $\mathsf{uwBr}$. We wish to explain how the $\mathsf{wBr}_\delta$-functoriality of the target translates to the source.

Define a map
$$
\delta_{s,t} : \Gamma_{k}[x]^{\otimes S} \otimes k\{\mathrm{Bij}(T, S)\} \lra \Gamma_{k}[x]^{\otimes S \setminus s } \otimes k\{\mathrm{Bij}(T \setminus t, S \setminus s)\}$$
by the formula
$$\big(\bigotimes_{u \in S} x_u^{[\ell(u)]}\big) \otimes \sigma  \longmapsto \begin{cases}
\dim(V) \big(\bigotimes_{u \in S \setminus s} x^{[\ell(u)]}_u \big) \otimes \sigma\vert_{T \setminus t} & \sigma(t)=s \text{ and } \ell(s)=0\\
0 & \sigma(t)=s \text{ and } \ell(s) > 0\\
\binom{\ell(s)+ \ell(\sigma(t))}{\ell(s)} \big(\bigotimes_{u \in S \setminus s} x_u^{[\ell'(u)]} \big)\otimes \sigma' & \sigma(t) \neq s
\end{cases}$$
where $\ell'$ is given by $\ell'(\sigma(t)) = \ell(s)+ \ell(\sigma(t))$ and by the restriction of $\ell$ on all other elements of $S \setminus s$, and $\sigma'$ is given by $\sigma'(\sigma^{-1}(s)) = \sigma(t)$, and by the restriction of $\sigma$ on all other elements of $T \setminus t$.

\begin{lemma}\label{lem:ev}
The square
\begin{equation*}
\begin{tikzcd}
\Gamma_{k}[x]^{\otimes S} \otimes k\{\mathrm{Bij}(T,S)\} \dar{\psi_{S,T}}\rar{\delta_{s,t}} & \Gamma_{k}[x]^{\otimes S \setminus s } \otimes k\{\mathrm{Bij}(T \setminus t , S \setminus s )\} \dar{\psi_{S \setminus s,T \setminus t}}\\
H^*(\GL(V) ; V^{\otimes S } \otimes (V^\vee)^{\otimes T}) \rar{\epsilon_{s,t}}
 & H^*(\GL(V) ; V^{\otimes S \setminus s } \otimes (V^\vee)^{\otimes T \setminus t })
\end{tikzcd}
\end{equation*}
 commutes in a stable range of degrees.
\end{lemma}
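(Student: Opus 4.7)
The plan is to exploit the $\mathsf{uwBr}$-naturality of $\psi$ from Section \ref{sec:uwBr} to reduce the square to two minimal configurations, and then to verify each by direct computation in the divided-power algebra $\Gamma_k[x] = \Ext^*_{\GL}(I,I)$. Both $\epsilon_{s,t}$ and $\delta_{s,t}$ are compatible with the $\mathsf{uwBr}$-morphisms that involve neither $s$ nor $t$: writing a basis element $\sigma \in \mathrm{Bij}(T,S)$ as a disjoint union of the ``cycle'' through $t$ together with a residual bijection on the complement, only the cycle through $t$ is touched by $\epsilon_{s,t}$. Using the tensor-product compatibility of $\psi$ and $\mathsf{uwBr}$-naturality to split off the untouched residual bijection as an external factor, commutativity of the square is reduced to the two cases where the cycle through $t$ has length one (a ``loop'', $\sigma(t) = s$, $|S| = |T| = 1$) or length two (a ``merge'', $\sigma(t) \neq s$, $|S| = |T| = 2$).

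In the loop case, the class $\psi_{\{s\},\{t\}}(x_s^{[i]} \otimes \sigma)$ corresponds under the adjunction $H^*(\GL(V); V \otimes V^\vee) \cong \Ext^*_{\GL(V)}(V,V)$ to the class $x^{[i]}$, and $\epsilon_{s,t}$ is induced by $ev_V : V \otimes V^\vee \to k$, which translates to the trace map $\Ext^*_{\GL(V)}(V,V) \to \Ext^*_{\GL(V)}(k,k) = H^*(\GL(V);k)$. For $i=0$ this sends $\mathrm{id}_V$ to $\dim(V) \cdot 1$, matching the first branch of $\delta_{s,t}$. For $i>0$ the image lies in $H^{2i}(\GL(V);k)$, which vanishes in a stable range of positive degrees by the Quillen-type vanishing already invoked in Remark \ref{rem:Conventionx}, matching the second branch.

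In the merge case ($S = \{s,s'\}$, $T = \{t,t'\}$ with $\sigma(t) = s'$, $\sigma(t') = s$), the class $\psi(x_s^{[a]} x_{s'}^{[b]} \otimes \sigma)$ corresponds, under $H^*(\GL(V); V^{\otimes 2} \otimes (V^\vee)^{\otimes 2}) \cong \Ext^*_{\GL(V)}(V^{\otimes 2}, V^{\otimes 2})$, to a morphism which is a permutation decorated with $x^{[a]}$ on one strand and $x^{[b]}$ on the other. The partial evaluation $\epsilon_{s,t}$ contracts the $s$-output against the $t$-input, which in the Yoneda picture concatenates the two strands into a single strand from $t'$ to $s'$ carrying the Yoneda composite of $x^{[a]}$ and $x^{[b]}$; by the divided-power product in $\Gamma_k[x]$ this equals $\binom{a+b}{a} x^{[a+b]}$, exactly the third branch of $\delta_{s,t}$.

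The main obstacle is the vanishing in the loop case for $i > 0$, which relies on the stable vanishing of $H^*(\GL(V);k)$ in positive degrees. Once that is in hand, the rest of the argument is careful bookkeeping: identifying $\epsilon_{s,t}$ with partial trace on the $\Ext$ side, recognising the divided-power product as the Yoneda composition of strand decorations, and verifying that the naturality splitting carries the subset $\mathrm{Bij}(T,S) \subset \Gamma_k[x]^{\otimes S} \otimes k\{\mathrm{Bij}(T,S)\}$ as required.
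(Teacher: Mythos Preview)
Your proposal is correct and follows essentially the same approach as the paper: both reduce via tensor-product/$\mathsf{uwBr}$-naturality to the minimal ``loop'' case $|S|=|T|=1$ and ``merge'' case $|S|=|T|=2$, then verify the loop case using the stable vanishing of $H^{>0}(\GL(V);k)$ and the merge case by identifying $\epsilon_{s,t}$ with the Yoneda product in $\Ext^*_{\GL(V)}(V,V)\cong\Gamma_k[x]$. The paper's phrasing differs only cosmetically, first choosing an identification $T\cong S$ sending $t\mapsto s$ to rewrite $\epsilon_{s,t}\psi_{S,T}(\cdots\otimes\sigma)$ as $\epsilon_{s,\sigma^{-1}(s)}\psi_{S,S}(\cdots\otimes\mathrm{Id}_S)$ before reducing.
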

\begin{proof}
If $\sigma(t)=s$ then, by taking tensor products, we can reduce to the case $S=\{s\}$ and $T=\{t\}$. In this case the result is given by applying the evaluation map to $x^{[\ell(s)]} \in H^{2\ell(s)}(\GL(V) ; V \otimes V^\vee)$. If $\ell(s)>0$ then the result of this evaluation map is 0, as in the stable range $H^{2\ell(s)}(\GL(V) ; k)=0$ in that case. If $\ell(s)=0$ then the element $x^{[0]} \in H^0(\GL(V) ; V \otimes V^\vee)$ is the coevaluation, so applying the evaluation map to it gives $\dim(V) = \delta \in k = H^0(\GL(V) ; k)$.

If $\sigma(t) = s' \neq s$, then we can write $t' := \sigma^{-1}(s) \neq t$ and again by taking tensor products we reduce to the case $S=\{s,s'\}$ and $T=\{t,t'\}$. Then
\begin{equation*}
\psi_{S,T}(x_s^{[\ell(s)]} x_{s'}^{[\ell(s')]} \otimes \sigma) \in H^*(\GL(V) ; V^{\otimes S } \otimes (V^\vee)^{\otimes T})
\end{equation*}
is the cup product of the classes
\begin{align*}
\psi_{\{s\}, \{t'\}}(x_s^{[\ell(s)]}) &\in H^*(\GL(V) ; V^{\otimes \{s\}} \otimes (V^\vee)^{\otimes\{t'\}}) \cong \Ext^*_{\GL(V)}(V, V)\\
\psi_{\{s'\}, \{t\}}(x_{s'}^{[\ell(s')]}) &\in H^*(\GL(V) ; V^{\otimes \{s'\}} \otimes (V^\vee)^{\otimes\{t\}}) \cong \Ext^*_{\GL(V)}(V, V),
\end{align*}
and applying $\epsilon_{s,t}$ corresponds to evaluating the Yoneda product. By the divided power algebra structure described in Theorem \ref{thm:BokstedtPeriodicity}, the result is
$$\binom{\ell(s)+ \ell(s')}{\ell(s)} x^{[\ell(s)+ \ell(s')]},$$
which agrees with $\psi_{\{s'\}, \{t'\}} \delta_{s,t}$ applied to $x_s^{[\ell(s)]} x_{s'}^{[\ell(s')]} \otimes \sigma$.
\end{proof}

\subsection{Graphical interpretation}\label{sec:GraphicalInt}

\begin{wrapfigure}[9]{r}{0.3\linewidth}
\vspace{-2ex}
\centering
\includegraphics[width=3cm]{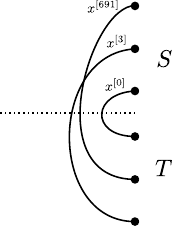}\\
\end{wrapfigure}

Rather than the formulas given above, we can interpret the functoriality of $\Gamma_{k}[x]^{\otimes -} \otimes k\{\mathrm{Bij}(\bullet,-)\}$ on the walled Brauer category by interpreting elements of $\Gamma_{k}[x]^{\otimes S} \otimes k\{\mathrm{Bij}(T,S)\}$ as given graphically as shown to the right. That is, an element of $\mathsf{wBr}_\delta((\emptyset, \emptyset), (S,T))$ with each strand labelled by an $x^{[i]}$.

Then the functoriality is given by concatenating with an element of the walled Brauer category, multiplying labels which now lie on the same strand together using the divided power multiplication,  then setting any closed components labelled by $x^{[i]}$ with $i>0$ equal to zero, and setting any closed components labelled by $x^{[0]}$ equal to $\dim(V)$.

\section{Proof of Theorem \ref{thm:mainFin}: tensor powers}

In this section we prove the following variant of Theorem \ref{thm:mainFin}: with $\SL(V)$ replaced by $\GL(V)$, with the representations $S_{\lambda, \mu}(V)$ replaced by $V^{\otimes n} \otimes (V^\vee)^{\otimes m}$, and without conditions on the size of $n$ and $m$. That is, the statement that
\begin{equation}\label{eq:TensorPowerCase}
H^i(\GL(V) ; V^{\otimes n} \otimes (V^\vee)^{\otimes m}) \cong [V^{\otimes n} \otimes (V^\vee)^{\otimes m} \otimes X^i]^{\GL(V)}
\end{equation}
for $i < 2p$, as long as $\dim(V)$ is large enough. In this statement $\GL(V)$ can be replaced by $\SL(V)$ by the same argument as Corollary \ref{cor:GLvSL}. In Section \ref{sec:GeneralCase} we will explain how to deduce from this the statement of Theorem \ref{thm:mainFin} for the $S_{\lambda, \mu}(V)$: it is there that the conditions on $n$ and $m$ will arise. Our proof of \eqref{eq:TensorPowerCase} will be by calculating both sides and comparing them. Given the homological stability results of Section \ref{sec:Stability}, the left-hand side has been calculated by Theorem \ref{thm:StabCohTensorPowers}. The main task of this section is therefore to calculate the right-hand side.

Following the notation used in Section \ref{sec:FunctorHomology}, for a functor $F$ from finite-dimensional $k$-modules to $k$-modules it is convenient to define
$$H^i(\GL ; F) := \lim_{V \in \mathsf{VIC}(k)^\mathrm{op}} H^i(\GL(V) ; F(V)).$$
(It then agrees with $\Ext_{\GL}^i(k, F)$.) Theorem \ref{thm:StabCohTensorPowers} provides isomorphisms
$$ k\{ \mathrm{Bij}(T,S)\} \overset{\sim}\lra H^{0}(\GL ; I^{\otimes S} \otimes (I^\vee)^{\otimes T})$$
given by inserting copies of $coev: k \to I \otimes I^\vee$ and permuting the $I^\vee$ terms. Let $S=T= \ul{r} := \{1,2,\ldots,r\}$, identify $I^{\otimes S} \otimes (I^\vee)^{\otimes T} = (I \otimes I^\vee)^{\otimes r}$, so that the above gives an isomorphism 
$$ k\{\Sigma_r\}\overset{\sim}\lra H^{0}(\GL ; (I \otimes I^\vee)^{\otimes r})$$
where the $\Sigma_r$-action on the right-hand side by permuting the tensor factors corresponds on the left-hand side to the action of $\Sigma_r$ on itself by conjugation; to avoid confusion we write $\Sigma_r^{ad}$ for this $\Sigma_r$-set. One should visualise elements of $\Sigma_r^{ad}$ as permutations presented as disjoint cycles.

We write $\mathrm{Sym}^r(I \otimes I^\vee) = ((I \otimes I^\vee)^{\otimes r})_{\Sigma_r}$ for the coinvariants of the action which permutes the tensor factors. Commuting $H^0(\GL ; -)$ with $(-)_{\Sigma_r}$ defines a map
$$k\{\Sigma_r^{ad}\}_{\Sigma_r}\overset{\sim}\lra H^{0}(\GL ; (I \otimes I^\vee)^{\otimes r})_{\Sigma_r} \lra  H^{0}(\GL ; \mathrm{Sym}^r(I \otimes I^\vee)).$$
The conjugacy class of $r$-cycles in $\Sigma_r$ gives a well-defined element in the left-hand term, which under this map defines an element
$$c_r \in  H^{0}(\GL ; \mathrm{Sym}^r(I \otimes I^\vee)).$$

\begin{lemma}\label{lem:invariants}
The map
$$k[c_1, c_2, \ldots] \lra H^0(\GL ; \mathrm{Sym}^\bullet(I \otimes I^\vee))$$
is an isomorphism of graded $k$-algebras in gradings $\bullet < p$.
\end{lemma}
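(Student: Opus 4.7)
The idea is to reduce the lemma to a purely combinatorial identification between invariants of the regular representation of $\Sigma_r$ under conjugation and the free polynomial algebra on the $c_i$. First I would unpack the definition: for $r<p$ the order $r!$ is invertible in $k$, so the norm map identifies the coinvariants $(V^{\otimes r})_{\Sigma_r}$ with the invariants $(V^{\otimes r})^{\Sigma_r}$, and in particular taking $H^0(\GL;-)$ commutes with the passage to $\mathrm{Sym}^r$. Thus for $r<p$,
$$H^0(\GL;\mathrm{Sym}^r(I\otimes I^\vee)) \;\cong\; H^0\bigl(\GL;(I\otimes I^\vee)^{\otimes r}\bigr)_{\Sigma_r}.$$

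Next, I would apply Theorem \ref{thm:StabCohTensorPowers} in cohomological degree $0$ to the right-hand side, after identifying $(I\otimes I^\vee)^{\otimes r}$ with $I^{\otimes \underline r}\otimes (I^\vee)^{\otimes \underline r}$. In degree $0$ only the summand with all $x^{[\ell(s)]}=x^{[0]}=1$ survives in $\Gamma_k[x]^{\otimes \underline r}$, so
$$H^0(\GL;(I\otimes I^\vee)^{\otimes r}) \;\cong\; k\{\mathrm{Bij}(\underline r,\underline r)\} \;\cong\; k[\Sigma_r].$$
The crucial point is now the matching of actions: the $\Sigma_r$-action on the source that permutes the $r$ tensor factors of $I\otimes I^\vee$ corresponds, under the above iso, to the \emph{simultaneous} permutation of the $S$-factors and the $T$-factors of $I^{\otimes S}\otimes(I^\vee)^{\otimes T}$, which on $\mathrm{Bij}(T,S)$ translates to the conjugation action on $\Sigma_r$. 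Hence the $\Sigma_r$-coinvariants (equivalently, since $r!\in k^\times$, the orbit set) are freely spanned by the conjugacy classes of $\Sigma_r$, i.e.\ by the partitions of $r$.

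Finally I would match this basis with the monomial basis of $k[c_1,c_2,\ldots]$. By construction $c_i\in H^0(\GL;\mathrm{Sym}^i(I\otimes I^\vee))$ is the image of an $i$-cycle under the identification above, and the multiplication in $\mathrm{Sym}^\bullet(I\otimes I^\vee)$ is the concatenation map on tensor powers followed by symmetrisation. Under the iso $H^0(\GL;(I\otimes I^\vee)^{\otimes r})\cong k[\Sigma_r]$, concatenation corresponds to juxtaposing permutations on disjoint letters, so a monomial $\prod_i c_i^{a_i}$ with $\sum i\,a_i=r$ is sent to the conjugacy class of a permutation in $\Sigma_r$ whose cycle-type is the partition with $a_i$ parts of size $i$. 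As cycle type gives a bijection between partitions of $r$ and conjugacy classes of $\Sigma_r$, the map in degree $r<p$ is an isomorphism, and it is an algebra map by construction.

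The main obstacle is the careful bookkeeping in the second paragraph: verifying that, under the isomorphism of Theorem \ref{thm:StabCohTensorPowers}, the permutation of tensor factors in $(I\otimes I^\vee)^{\otimes r}$ really does correspond to conjugation on $\Sigma_r$ (rather than, say, left or right multiplication). Everything else is either formal or a routine count of partitions; the hypothesis $r<p$ enters precisely to let us identify $\mathrm{Sym}^r$ with $\Sigma_r$-invariants and to avoid any characteristic-$p$ pathologies in the symmetric power.
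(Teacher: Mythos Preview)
Your proposal is correct and follows essentially the same route as the paper: the paper establishes the isomorphism $H^0(\GL;(I\otimes I^\vee)^{\otimes r})\cong k\{\Sigma_r^{ad}\}$ (with the conjugation action) in the text immediately preceding the lemma, then in the proof uses exactness of $\Sigma_r$-coinvariants for $r<p$ to identify $H^0(\GL;\mathrm{Sym}^r(I\otimes I^\vee))$ with $k\{\Sigma_r^{ad}\}_{\Sigma_r}$, and matches monomials in the $c_i$ with cycle types exactly as you do. Your flagged ``main obstacle'' about the conjugation action is precisely what the paper handles just before stating the lemma.
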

\begin{proof}
For $r < p$ taking $\Sigma_r$-coinvariants is exact (because then $|\Sigma_r|=r!$ is invertible in $k$), giving an isomorphism
$$k\{\Sigma_r^{ad}\}_{\Sigma_r} \overset{\sim}\lra H^{0}(\GL ; \mathrm{Sym}^r(I \otimes I^\vee)).$$
For $\sum_i i \cdot a_i = r$ the image of the monomial $c_1^{a_1} c_2^{a_2} \cdots c_r^{a_r} \in k[c_1, c_2, c_3, \ldots]$ in $H^0(\GL ; \mathrm{Sym}^r(I \otimes I^\vee)) \cong k\{\Sigma_r^{ad}\}_{\Sigma_r}$ is the class of any permutation having precisely $a_i$-many $i$-cycles. This is visibly a bijection.
\end{proof}

As in the introduction, define a graded ring object $X^\bullet$ in coefficient systems by
\begin{equation}\label{eq:SymModRegSeq}
\mathrm{Sym}^\bullet(I \otimes I^\vee)/(c_1, c_2, \ldots)
\end{equation}
with grading doubled, so that $c_i$ has degree $2i$. This is to be constructed, and interpreted, as follows. For every vector space $V$ the class $c_i$ defines a class $c_i^V \in \mathrm{Sym}^\bullet(V \otimes V^\vee)$ via
$$H^0(\GL ; \mathrm{Sym}^\bullet(I \otimes I^\vee)) \lra H^0(\GL(V) ; \mathrm{Sym}^\bullet(V \otimes V^\vee)) \subset \mathrm{Sym}^\bullet(V \otimes V^\vee),$$
and we can form the quotient graded $k$-algebra
$$\mathrm{Sym}^\bullet(V \otimes V^\vee)/(c_1^V, c_2^V, \ldots, c_i^V).$$
These assemble into a graded coefficient system $\mathrm{Sym}^\bullet(I \otimes I^\vee)/(c_1, c_2, \ldots, c_i)$, and we write $X^\bullet_i$ for this coefficient system with its grading doubled. There are natural maps between these, and we define \eqref{eq:SymModRegSeq} by $\colim_{i \to \infty} \mathrm{Sym}^\bullet(I \otimes I^\vee)/(c_1, c_2, \ldots, c_i)$, and write $X^\bullet = X^\bullet_\infty = \colim_{i \to \infty} X^\bullet_i$.

The following two lemmas are somewhat technical, but will be used to justify the claim (in Corollary \ref{cor:InsideOutside}) that we may commute $H^*(\GL ; -)$ with quotienting by the $c_i$, at least in gradings $\bullet < p$.

\begin{lemma}\label{lem:RegOnSym}
There are sequences of graded coefficient systems
\begin{equation*}
\begin{tikzcd}[row sep=small]
0 \lra  \frac{\mathrm{Sym}^\bullet(I \otimes I^\vee)}{(c_1,  \ldots, c_{i-1})}[i] \rar{c_{i}}& \frac{\mathrm{Sym}^\bullet(I \otimes I^\vee)}{(c_1,  \ldots, c_{i-1})} \rar & \frac{\mathrm{Sym}^\bullet(I \otimes I^\vee)}{(c_1,  \ldots, c_{i})} \lra 0 
\end{tikzcd}
\end{equation*}
which are exact in gradings $\bullet < p$ when evaluated on $k$-modules $V$ of large dimension.
\end{lemma}
\begin{proof}
For any $k$-module $V$ the composition
\begin{equation}\label{eq:Specialise}
k[c_1, c_2, \ldots] \to H^0(\GL ; \mathrm{Sym}^\bullet(I \otimes I^\vee)) \to H^0(\GL(V) ; \mathrm{Sym}^\bullet(V \otimes V^\vee))
\end{equation}
sends, by definition, $c_i$ to $c_i^V \in \mathrm{Sym}^i(V \otimes V^\vee)$, so it suffices to show that $c_1^V, \ldots, c_{p-1}^V$ is a regular sequence in the graded $k$-algebra $\mathrm{Sym}^\bullet(V \otimes V^\vee)$ for all large enough $\dim(V)$. Recall that a sequence $a_1, \ldots, a_n$ of elements in a commutative algebra $A$ is \emph{regular} if each $a_i$ is not a zerodivisor in $A/(a_1, \ldots, a_{i-1})$. This may be tested after base change to an algebraic closure of $k$, which we now implicitly do.

We identify $V^\vee \otimes V$ with $\mathrm{End}(V)$ considered as an affine algebraic variety, and so identify the graded $k$-algebra $\mathrm{Sym}^\bullet(V \otimes V^\vee)$ with the ring of homogeneous regular functions on $\mathrm{End}(V)$. There are homogeneous regular functions $\sigma_1, \sigma_2, \ldots, \sigma_{\dim(V)}$ given by the coefficients of the characteristic polynomial $\det(tI - A) = t^{\dim(V)} + \sigma_1(A) t^{\dim(V)-1} + \cdots + \sigma_{\dim(V)}(A)$. Define $\sigma_0(A) = 1$ too. These functions are also $\GL(V)$-invariant, and we first wish to relate them to the $c_i^V$.

Choosing a basis to identify $V = k^n$, there are matrix coordinate functions $x_{i,j} : \mathrm{End}(k^n) \to k$ for $i, j \in \{1,\ldots,n\}$, and $\mathrm{Sym}^\bullet(V \otimes V^\vee)$ is the polynomial algebra on these. Spelling out our definition of $c_r^V$ in terms of these functions gives
$$c_r^V = \sum_{(i_1, \ldots, i_r) \in \{1,\ldots,n\}^r} x_{i_1, i_2} x_{i_2, i_3} \cdots x_{i_r, i_1}.$$
Evaluated at a generic diagonal matrix $A = \mathrm{diag}(\lambda_1, \ldots, \lambda_n) \in \mathrm{End}(k^n)$, this sum only has non-zero contributions when $i_1=i_2 = \cdots = i_r$, giving
$$c_r^V(A) = \sum_{i=1}^n \lambda_i^r,$$
i.e.\ it is the $r$th power-sum symmetric polynomial $p_r(\lambda_1, \ldots, \lambda_n)$ in the eigenvalues of $A$. In other words, it is $\mathrm{Tr}(A^r)$. On the other hand
$$t^{\dim(V)} + \sigma_1(A) t^{\dim(V)-1} + \cdots + \sigma_{n}(A) = \det(tI - A) = \prod_{i=1}^n(t-\lambda_i),$$
i.e.\ $\sigma_i(A)$ is $(-1)^i$ times the $i$th elementary symmetric polynomial $e_i(\lambda_1, \ldots, \lambda_n)$ in the eigenvalues of $A$. By the Girard--Newton identities, for $r \leq n$ the identity
$$r  \sigma_r + \sum_{i=1}^r  \sigma_{r-i} c^V_i = 0$$
holds on the locus of diagonal matrices: as the left-hand side is $\GL(V)$-invariant the identity also holds on the locus of diagonalisable matrices, and as this locus is Zariski-dense the identity holds in $\mathrm{Sym}^\bullet(V \otimes V^\vee)$. From these identities it follows that the sequence $c^V_1, \ldots, c^V_{p-1}$ is regular if and only if the sequence $\sigma_1, \ldots, \sigma_{p-1}$ is.

The homogeneous ideal $(\sigma_1, \ldots, \sigma_{\dim(V)})$ defines the subvariety of $\mathrm{End}(V)$ consisting of those endomorphisms with characteristic polynomial $t^{\dim(V)}$, i.e.\ the nilpotent endomorphisms. This subvariety is well-known to have codimension $\dim(V) = \mathrm{rk}(\GL(V))$, see e.g. \cite[p.\ 64]{JantzenNilpotent}. As the sequence $\sigma_1, \ldots, \sigma_{\dim(V)}$ consists of homogeneous elements in the graded polynomial ring $\mathrm{Sym}^\bullet(V \otimes V^\vee)$, and generates an ideal of codimension $\dim(V)$, it follows that it is a regular sequence (see e.g.\ Proposition 4.3.4 of \cite{Benson}, paying attention to Hypothesis 4.3.2 (b) and using that finitely generated polynomial rings are Cohen--Macaulay). Thus the subsequence $\sigma_1, \ldots, \sigma_{p-1}$ is also regular.
\end{proof}

\begin{remark}
This discussion also shows that $c_p = c_1^p \in H^0(\GL ; \mathrm{Sym}^\bullet(I \otimes I^\vee))$, so Lemma \ref{lem:invariants} is sharp.
\end{remark}

Using the map $k[c_1, c_2,  \ldots] \to H^0(\GL ; \mathrm{Sym}^\bullet(I \otimes I^\vee))$ and cup product makes $H^*(\GL ; I^{\otimes S} \otimes (I^\vee)^{\otimes T} \otimes \mathrm{Sym}^\bullet(I \otimes I^\vee))$ into a (right) $k[c_1, c_2,  \ldots]$-module.

\begin{lemma}\label{lem:RegOnCohOfSym}
The kernel of multiplication by $c_i$ on
$$H^*(\GL ; I^{\otimes S} \otimes (I^\vee)^{\otimes T} \otimes \mathrm{Sym}^\bullet(I \otimes I^\vee))/(c_1, \ldots, c_{i-1})$$
is trivial in gradings $\bullet < p$.
\end{lemma}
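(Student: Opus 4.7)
My plan is to prove the stronger statement that the module
$$M^\bullet := H^*(\GL ; I^{\otimes S} \otimes (I^\vee)^{\otimes T} \otimes \mathrm{Sym}^\bullet(I \otimes I^\vee))$$
is free over $k[c_1, c_2, \ldots]$ in gradings $\bullet < p$, from which regularity follows immediately. The starting point is that for $r < p$ the order $r!$ is invertible in $k$, so $\Sigma_r$-coinvariants is exact; this lets me rewrite $\mathrm{Sym}^r(I \otimes I^\vee)$ as $((I \otimes I^\vee)^{\otimes r})_{\Sigma_r}$ and commute the coinvariants with $H^*(\GL;-)$. Combined with Theorem \ref{thm:StabCohTensorPowers}, this yields
$$M^r \cong \bigl[\Gamma_k[x_u \mid u \in S \sqcup \ul{r}] \otimes k\{\mathrm{Bij}(T \sqcup \ul{r}, S \sqcup \ul{r})\}\bigr]_{\Sigma_r},$$
with $\Sigma_r$ acting diagonally on the two copies of $\ul{r}$.

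I then give a graphical description of this coinvariant space. A bijection $\phi : T \sqcup \ul{r} \to S \sqcup \ul{r}$, viewed together with the identification of the two copies of $\ul{r}$, defines a graph whose vertices in $T$ and $S$ have valence one and whose vertices in $\ul{r}$ have valence two; its connected components are therefore either paths from $T$ to $S$ (passing through some $\ul{r}$-vertices) or cycles lying entirely within $\ul{r}$. The divided-power data attaches a label $x^{[\ell_u]}$ to each vertex of $S \sqcup \ul{r}$, and $\Sigma_r$ permutes the $\ul{r}$-vertices together with their labels, so a $k$-basis of $M^r$ is given by isomorphism classes of such labeled graphs with distinguished $T$- and $S$-endpoints. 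Under this identification, cup product with $c_i$ corresponds to adjoining a new cycle of length $i$ in which every vertex carries the label $x^{[0]}$: this follows because $c_i$ is the image of an $i$-cycle in $\Sigma_i^{ad}$ with trivial decorations, and the $\mathsf{uwBr}$-naturality of $\Psi_{S,T}$ from Section \ref{sec:uwBr} translates the commutative multiplication $\mathrm{Sym}^s \otimes \mathrm{Sym}^i \to \mathrm{Sym}^{s+i}$ into disjoint union of graphs.

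With these descriptions in hand the freeness is immediate: every labeled graph factors uniquely as (a multiset of cycle components in which every label is $x^{[0]}$) $\times$ (the remaining components, comprising all paths together with all cycles having at least one nonzero label). The first factor corresponds bijectively to a monomial in $k[c_1, c_2, \ldots]$, and the second factor indexes a $k$-basis of $M^\bullet / (c_1, c_2, \ldots) M^\bullet$. This factorisation respects the grading $\bullet$ and is valid throughout $\bullet < p$; hence $M^{<p}$ is free over $k[c_1, c_2, \ldots]$, and the sequence $c_1, c_2, \ldots$ acts regularly on it. I expect the main technical obstacle to be the verification that cup product with $c_i$ really does correspond combinatorially to the ``adjoin a trivially-labeled $i$-cycle'' operation, which requires carefully unwinding the $\mathsf{uwBr}$-functoriality of $\Psi$ and the compatibility of the Yoneda product with the graphical calculus of Section \ref{sec:uwBr}.
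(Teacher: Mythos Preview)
Your proposal is correct and follows essentially the same route as the paper. Both arguments use that for $r<p$ the $\Sigma_r$-coinvariants are exact, apply Theorem~\ref{thm:StabCohTensorPowers} to identify $M^r$ with the $\Sigma_r$-orbits of $\Gamma_k[x]^{\otimes S\sqcup\ul{r}}\otimes k\{\mathrm{Bij}(T\sqcup\ul{r},S\sqcup\ul{r})\}$, and observe that multiplication by $c_i$ corresponds to adjoining a trivially-labelled $i$-cycle; the paper then simply asserts that regularity is ``clear'', whereas you make this explicit by proving the slightly stronger statement of freeness via the unique factorisation into (trivially-labelled cycles)$\times$(remainder).
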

\begin{proof}
For $r < p$ taking $\Sigma_r$-coinvariants is exact. Thus by Theorem \ref{thm:StabCohTensorPowers} the cohomology of $I^{\otimes S} \otimes (I^\vee)^{\otimes T} \otimes \mathrm{Sym}^r(I \otimes I^\vee)$ with $r<p$ is identified with the $\Sigma_r$-coinvariants of
$$k\{x^{[0]}, x^{[1]}, x^{[2]}, \ldots\}^{\otimes S \sqcup \ul{r}} \otimes k\{\mathrm{Bij}(T \sqcup \ul{r}, S \sqcup \ul{r})\}.$$
This is the vector space with basis the set $\bN^{S \sqcup \ul{r}} \times \mathrm{Bij}(T \sqcup \ul{r}, S \sqcup \ul{r})$, so the coinvariants are identified with the vector space with basis the $\Sigma_r$-orbits of this set. 

In this picture, multiplication by $c_i \in H^0(\GL ; \mathrm{Sym}^i(I \otimes I^\vee))$ corresponds to 
\begin{align*}
&k\{(\bN^{S \sqcup \ul{r-i}} \times \mathrm{Bij}(T \sqcup \ul{r-i}, S \sqcup \ul{r-i}))/\Sigma_{r-i}\}\\
& \quad\quad\quad\quad\quad\quad \lra k\{(\bN^{S \sqcup \ul{r}} \times \mathrm{Bij}(T \sqcup \ul{r}, S \sqcup \ul{r}))/\Sigma_r\}
\end{align*}
which adjoins the $i$-cycle $(r-i+1, r-i+2, \ldots, r)$ with all labels $0 \in \bN$. Dividing out by $(c_1, \ldots, c_{i-1})$ means quotienting by the subspace of those elements which contain a $j$-cycle with all labels $0 \in \bN$ and $j<i$. Adjoining the $i$-cycle $(r-i+1, r-i+2, \ldots, r)$ with all labels $0 \in \bN$ cannot cause there to be such $j$-cycles if they were not already present, which proves the claim.
\end{proof}

\begin{corollary}\label{cor:InsideOutside}
The natural map 
$$H^*(\GL ; I^{\otimes S} \otimes (I^\vee)^{\otimes T} \otimes \mathrm{Sym}^\bullet(I \otimes I^\vee))/(c_1, c_2, \ldots) \lra H^*(\GL ; I^{\otimes S} \otimes (I^\vee)^{\otimes T} \otimes X^{2\bullet})$$
is an isomorphism in gradings $\bullet < p$.
\end{corollary}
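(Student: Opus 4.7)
The plan is to construct $X^\bullet$ from $\mathrm{Sym}^\bullet(I\otimes I^\vee)$ in stages, killing $c_1, c_2, \ldots$ one at a time, and at each stage to translate the resulting short exact sequence of coefficient systems into one in cohomology using the two regularity lemmas just proved. Set $X_n^\bullet := \mathrm{Sym}^\bullet(I \otimes I^\vee)/(c_1, \ldots, c_n)$. Since $c_i$ has grading $i$, in any grading $\bullet < p$ only $c_1, \ldots, c_\bullet$ contribute to the ideal, so $X_n^\bullet = X^\bullet$ in gradings $\bullet < p$ as soon as $n \geq p-1$. It therefore suffices to prove the analogous assertion with $X^\bullet$ replaced by $X_n^\bullet$, for every $n \geq 0$.

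Writing $M := I^{\otimes S} \otimes (I^\vee)^{\otimes T}$, I would induct on $n$, proving simultaneously in gradings $\bullet < p$ that:
\begin{itemize}
\item[(i)] the natural surjection $H^*(\GL; M \otimes \mathrm{Sym}^\bullet)/(c_1,\ldots,c_n) \lra H^*(\GL; M \otimes X_n^\bullet)$ is an isomorphism;
\item[(ii)] the sequence $c_{n+1}, c_{n+2}, \ldots$ acts regularly on $H^*(\GL; M \otimes X_n^\bullet)$.
\end{itemize}
The base case $n=0$ is exactly Lemma \ref{lem:RegOnCohOfSym}. For the inductive step, Lemma \ref{lem:RegOnSym} gives that multiplication by $c_n$ is injective on $X_{n-1}^\bullet$ in the relevant range, producing an exact sequence of coefficient systems
$$0 \lra X_{n-1}^{\bullet-n} \xrightarrow{\cdot c_n} X_{n-1}^\bullet \lra X_n^\bullet \lra 0.$$
Tensoring with $M$ preserves exactness (the tensor is over $k$), so applying $H^*(\GL; -)$ produces a long exact sequence. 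The inductive hypothesis (ii) at stage $n-1$ says that multiplication by $c_n$ on cohomology is injective, so this long exact sequence breaks into short exact sequences, expressing $H^*(\GL; M \otimes X_n^\bullet)$ as the quotient of $H^*(\GL; M \otimes X_{n-1}^\bullet)$ by $c_n$. Combined with (i)$_{n-1}$ this yields (i)$_n$, while (ii)$_n$ is then immediate from Lemma \ref{lem:RegOnCohOfSym} (quotienting a regular sequence by its initial terms leaves the tail acting regularly).

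I expect the main delicate point to be the very first move in the inductive step: to obtain a short exact sequence of coefficient systems one needs multiplication by $c_n$ to be injective as a natural transformation, not just pointwise at a single $V$. Since a morphism of functors valued in $k$-modules is a monomorphism iff it is pointwise so, it is enough to know injectivity at every sufficiently large $V$, which is exactly what Lemma \ref{lem:RegOnSym} provides; this is sufficient because the stable cohomology $H^*(\GL;-)$ is computed on values at large $V$.
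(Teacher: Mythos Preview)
Your proof is correct and follows essentially the same route as the paper: both argue by induction on $n$, carrying along both the isomorphism (i) and the residual regularity (ii), using Lemma~\ref{lem:RegOnSym} to get the short exact sequence of coefficient systems and Lemma~\ref{lem:RegOnCohOfSym} (via the inductive hypothesis) to split the resulting long exact sequence in cohomology. Your version is in fact slightly more careful than the paper's about the indexing (the paper writes $c_i$ in the displayed short exact sequence where $c_{i+1}$ is meant) and about making (ii) an explicit part of the inductive hypothesis.
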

\begin{proof}
We will show by induction over $i$ that the natural map
$$ H^*(\GL ; I^{\otimes S} \otimes (I^\vee)^{\otimes T} \otimes\mathrm{Sym}^\bullet(I \otimes I^\vee))/(c_1, c_2, \ldots, c_i) \to H^*(\GL ; I^{\otimes S} \otimes (I^\vee)^{\otimes T} \otimes X^{2\bullet}_i)$$
is an isomorphism for $\bullet < p$; it is tautological for $i=0$. By doubling the grading of the sequence in Lemma \ref{lem:RegOnSym} there are sequences of graded coefficient systems
$$ X_{i-1}^{2\bullet}[2i] \overset{c_{i+1}}\lra X_{i-1}^{2\bullet} \lra X_{i}^{2\bullet}$$
which are exact for $\bullet < p$ when evaluated on all $k$-modules $V$ of large enough dimension. This gives a long exact sequence on stable cohomology
$$\begin{tikzcd}[column sep=8pt]
   H^*(\GL ; I^{\otimes S} \otimes (I^\vee)^{\otimes T} \otimes X^{2\bullet}_{i-1}[2i]) \rar{- \cdot c_{i}}
             \ar[draw=none]{d}[name=X, anchor=center]{}
    & H^*(\GL ; I^{\otimes S} \otimes (I^\vee)^{\otimes T} \otimes X^{2\bullet}_{i-1}) \ar[rounded corners,
            to path={ -- ([xshift=2ex]\tikztostart.east)
                      |- (X.center) \tikztonodes
                      -| ([xshift=-2ex]\tikztotarget.west)
                      -- (\tikztotarget)}]{dl}[at end]{} \\      
   H^*(\GL ; I^{\otimes S} \otimes (I^\vee)^{\otimes T} \otimes X^{2\bullet}_{i}) \rar{\partial} 
   		\ar[draw=none]{d}[name=Y, anchor=center]{}
   & H^{*+1}(\GL ; I^{\otimes S} \otimes (I^\vee)^{\otimes T} \otimes X^{2\bullet}_{i-1}[2i]) \ar[rounded corners,
            to path={ -- ([xshift=2ex]\tikztostart.east)
                      |- (Y.center) \tikztonodes
                      -| ([xshift=-2ex]\tikztotarget.west)
                      -- (\tikztotarget)}]{dl}[swap, near end]{- \cdot c_{i}} \\  
    H^{*+1}(\GL ; I^{\otimes S} \otimes (I^\vee)^{\otimes T} \otimes X^{2\bullet}_{i-1}) \rar & \cdots.   
\end{tikzcd}$$
By the inductive assumption and Lemma \ref{lem:RegOnCohOfSym}, the maps $- \cdot c_{i}$ are injective in gradings $\bullet < p$, so the maps $\partial$ are trivial in this range. This shows that
$$H^*(\GL ; I^{\otimes S} \otimes (I^\vee)^{\otimes T} \otimes X_{i}^{2\bullet}) = H^*(\GL ; I^{\otimes S} \otimes (I^\vee)^{\otimes T} \otimes X_{i-1}^{2\bullet})/(c_{i}),$$
which with the inductive assumption again finishes the induction.

Finally, the map $X^{2\bullet}_p \to X^{2\bullet}_\infty = X^{2\bullet}$ is an isomorphism in gradings $\bullet < p$, so we in fact obtain the claimed statement at a finite stage of the induction.
\end{proof}

The class of an $(r+1)$-cycle in $k\{\Sigma^{ad}_{1+r}\}$ gives an element
$$\bar{d}_r \in k\{\Sigma^{ad}_{1+r}\}_{\Sigma_r} \lra H^0(\GL ; I \otimes I^\vee \otimes \mathrm{Sym}^r(I \otimes I^\vee)),$$
independent of the choice of $(r+1)$-cycle, and hence in the quotient an element
$$d_r \in H^0(\GL ; I \otimes I^\vee \otimes X^{2r}).$$

Using the $k$-algebra structure of $X^\bullet$ there is an induced map
$$k\{d_0, d_1, d_2, \ldots\}^{\otimes S} \to H^0(\GL ; I^{\otimes S} \otimes (I^\vee)^{\otimes S} \otimes X^{\bullet \otimes S}) \to H^0(\GL ; I^{\otimes S} \otimes (I^\vee)^{\otimes S} \otimes X^\bullet),$$
and then acting on the $I^\vee$'s by bijections gives a map
\begin{equation}\label{eq:OtherForm}
k\{d_0, d_1, d_2, \ldots\}^{\otimes S} \otimes k\{\mathrm{Bij}(T,S)\} \lra H^0(\GL ; I^{\otimes S} \otimes (I^\vee)^{\otimes T} \otimes X^\bullet).
\end{equation}
This is in fact a natural transformation of functors from the upwards walled Brauer category to $k$-modules, as in Section \ref{sec:uwBr}.

\begin{lemma}
The map \eqref{eq:OtherForm} is an isomorphism for $\bullet < 2p$ and all  $S$ and $T$.
\end{lemma}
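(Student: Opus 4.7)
Both sides vanish unless $|S| = |T|$ (the source because $\mathrm{Bij}(T,S) = \emptyset$ otherwise; the target because Pirashvili's cancellation already forces $H^0(\GL; I^{\otimes S \sqcup \ul{r}} \otimes (I^\vee)^{\otimes T \sqcup \ul{r}}) = 0$) and whenever $\bullet$ is odd. So assume $|S| = |T|$ and work in $X$-grading $2r$ with $r < p$.

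First I would unwind the target using Corollary \ref{cor:InsideOutside}, which for $r < p$ identifies $H^0(\GL; I^{\otimes S} \otimes (I^\vee)^{\otimes T} \otimes X^{2r})$ with the quotient of $H^0(\GL; I^{\otimes S} \otimes (I^\vee)^{\otimes T} \otimes \mathrm{Sym}^r(I \otimes I^\vee))$ by the ideal generated by $c_1, c_2, \ldots$ Since $\Sigma_r$-coinvariants are exact for $r < p$, the latter $H^0$ equals $H^0(\GL; I^{\otimes S \sqcup \ul{r}} \otimes (I^\vee)^{\otimes T \sqcup \ul{r}})_{\Sigma_r}$, which Theorem \ref{thm:StabCohTensorPowers} in degree $0$ identifies with $k\{\mathrm{Bij}(T \sqcup \ul{r}, S \sqcup \ul{r})\}_{\Sigma_r}$.

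Next I would analyze the orbits combinatorially. A bijection $\sigma : T \sqcup \ul{r} \to S \sqcup \ul{r}$ defines a directed graph on $T \sqcup S \sqcup \ul{r}$ whose connected components are either paths from some $t \in T$ through $\ul{r}$-vertices to some $s \in S$, or cycles contained entirely in $\ul{r}$. A $\Sigma_r$-orbit is therefore parameterised by: a bijection $\pi : T \to S$ (the endpoint correspondence), a function $\ell : T \to \bN$ (the path lengths), and a partition of $r - \sum_t \ell(t)$ (the sizes of the purely internal $\ul{r}$-cycles). By definition $c_i \in H^0(\GL; \mathrm{Sym}^i(I \otimes I^\vee))$ is the orbit of an $i$-cycle, and multiplication by $c_i$ via the $\GL$-equivariant map $\mathrm{Sym}^r \otimes \mathrm{Sym}^i \to \mathrm{Sym}^{r+i}$ corresponds, on orbits, to adjoining a disjoint $i$-cycle. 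Hence the quotient by $(c_1, c_2, \ldots)$ kills exactly those orbits with nonempty internal cycle partition, leaving the basis indexed by pairs $(\pi, \ell)$ with $\pi \in \mathrm{Bij}(T,S)$ and $\sum_t \ell(t) = r$.

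Finally I would match this with the source. The monomial basis of $k\{d_0, d_1, d_2, \ldots\}^{\otimes S}$ in grading $2r$, tensored with $k\{\mathrm{Bij}(T,S)\}$, is indexed by exactly the same data. Since $d_j$ is the class of a $(j+1)$-cycle, the algebra product $\prod_s d_{\ell(s)}$ in $X^\bullet$ corresponds to the disjoint union of $|S|$ such paths; combined with $\pi$, the map \eqref{eq:OtherForm} sends the monomial basis element $\bigotimes_s d_{\ell(s)} \otimes \pi$ to the orbit basis element $(\pi, \ell \circ \pi^{-1})$, establishing the isomorphism. The main step requiring verification is that multiplication by $c_i$ corresponds combinatorially to disjoint union of cycles; this is immediate from the definition of $c_i$ together with the naturality of the symmetric-algebra multiplication, and is the same kind of bookkeeping that Lemma \ref{lem:ev} already carries out for evaluations.
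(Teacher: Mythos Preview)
Your proof is correct and follows essentially the same approach as the paper's: both use Corollary~\ref{cor:InsideOutside} to reduce to the quotient of $k\{\mathrm{Bij}(T \sqcup \ul{r}, S \sqcup \ul{r})\}_{\Sigma_r}$ by the $c_i$'s, identify multiplication by $c_i$ with adjoining an internal $i$-cycle, and observe that what survives the quotient is exactly the combinatorial data $(\pi,\ell)$ matching the monomial basis of the source. The only cosmetic difference is that the paper fixes a bijection $S \cong T$ and phrases everything in terms of cycle decompositions of $\Sigma_{S \sqcup \ul{r}}^{ad}$, whereas you describe the $\Sigma_r$-orbits directly via the path/cycle graph of a bijection $T \sqcup \ul{r} \to S \sqcup \ul{r}$; these are the same picture.
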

\begin{proof}
By Corollary \ref{cor:InsideOutside} we have an identification
$$H^0(\GL ; I^{\otimes S} \otimes (I^\vee)^{\otimes T} \otimes \mathrm{Sym}^\bullet(I \otimes I^\vee))/(c_1, c_2, \ldots) \overset{\sim}\lra H^0(\GL ; I^{\otimes S} \otimes (I^\vee)^{\otimes T} \otimes X^{2\bullet})$$
for $\bullet < p$. For $r< p$ the object $\mathrm{Sym}^r(I \otimes I^\vee)$ is a summand of $(I \otimes I^\vee)^{\otimes r}$, so the left-hand side is a subquotient of $H^0(\GL ; I^{\otimes S \sqcup \ul{r}} \otimes (I^\vee)^{\otimes T \sqcup \ul{r}})$, and hence by Theorem \ref{thm:StabCohTensorPowers} it vanishes unless $|S|=|T|$.

We therefore choose a bijection $S \overset{\sim}\to T$. As in the proof of Lemma \ref{lem:RegOnCohOfSym}, the map
$$k\{\Sigma_{S \sqcup \ul{r}}^{ad}\} \lra H^0(\GL ; I^{\otimes S \sqcup \ul{r}} \otimes (I^\vee)^{\otimes S \sqcup \ul{r}}),$$
given by acting on the coevaluation element by permuting the $I^\vee$'s, is an isomorphism. It remains an isomorphism on taking coinvariants $(-)_{\Sigma_r}$ and, as this is an exact functor when $r<p$, these coinvariants can be commuted with $H^0(\GL ; -)$ to see that the induced map
$$k\{\Sigma_{S \sqcup \ul{r}}^{ad}/\Sigma_r\} \lra H^0(\GL ; I^{\otimes S} \otimes (I^\vee)^{\otimes S} \otimes \mathrm{Sym}^r(I \otimes I^\vee))$$
is an isomorphism as long as $r < p$. Multiplying by $c_i \in H^0(\GL ; \mathrm{Sym}^i(I \otimes I^\vee))$ on the right-hand side translates on the left-hand side to the map $\Sigma_{S \sqcup \ul{r-i}}^{ad}/\Sigma_{r-i} \to \Sigma_{S \sqcup \ul{r}}^{ad}/\Sigma_{r}$ which adds an $i$-cycle of elements in $\ul{r}$. Thus the quotient by the $c_i$ on the right-hand side translates on the left-hand side to killing those basis elements which are represented by a permutation having a cycle of elements in $\ul{r}$. Thus what remains are the permutations of $S \sqcup \ul{r}$ where every cycle contains an element of $S$ (let us call this $F_r \subset \Sigma^{ad}_{S \sqcup \ul{r}}$), modulo relabelling the elements $\{1,2,\ldots,r\}$, i.e.\ the induced map
\begin{equation}\label{eq:XXX}
\bigoplus_{r \geq 0} k\{F_r/\Sigma_r\} \lra H^0(\GL ; I^{\otimes S} \otimes (I^\vee)^{\otimes S} \otimes X^{\bullet})
\end{equation}
is an isomorphism for $\bullet < 2p$.

We wish to define an isomorphism
\begin{equation*}
\Upsilon: k\{d_0, d_1, d_2, \ldots\}^{\otimes S} \otimes k\{\Sigma_S^{ad}\} \overset{\sim}\lra \bigoplus_{r \geq 0} k\{F_r/\Sigma_r\}
\end{equation*}
which intertwines the maps \eqref{eq:OtherForm} (when $T=S$) and \eqref{eq:XXX}; as  \eqref{eq:XXX} is an isomorphism for $\bullet < 2p$, it will then follow that \eqref{eq:OtherForm} is also an isomorphism in this range. For a function $\ell : S \to \bN$ and a permutation $\sigma \in \Sigma_S^{ad}$ written in cycle form as
$$\sigma = (s_1, s_2, \ldots, s_{c_1})(s_{c_1+1}, s_{c_1+2}, \ldots, s_{c_1+c_2}) \cdots(s_{c_1 + \cdots + c_{i-1}+1}, \ldots, s_{c_1 + \cdots +c_i}),$$
we write $r := \sum_{s \in S} \ell(s)$ and define $\Upsilon(\left(\bigotimes_{s \in S} d_{\ell(s)}\right) \otimes \sigma)$ to be 
\begin{align*}
&( \underbrace{*, \ldots, *}_{\ell(s_1)}, s_1, \underbrace{*, \ldots, *}_{\ell(s_2)}, s_2, \cdots, \underbrace{*, \ldots, *}_{\ell(s_{c_1})}, s_{c_1})\\
& ( \underbrace{*, \ldots, *}_{\ell(s_{c_1+1})}, s_{c_1+1}, \underbrace{*, \ldots, *}_{\ell(s_{c_1+2})}, s_{c_1+2}, \cdots, \underbrace{*, \ldots, *}_{\ell(s_{c_1+c_2})}, s_{c_1+c_2}) \cdots \\
& (\,\,\,\,\,\,\,\,\,\, \mathclap{\underbrace{*, \ldots, *}_{\ell(s_{c_1 + \cdots + c_{i-1}+1})}}\,\,\,\,\,\,\,\,\,\, , s_{c_1 + \cdots + c_{i-1}+1}, \,\,\,\,\,\,\,\,\,\,\mathclap{\underbrace{*, \ldots, *}_{\ell(s_{c+1 + \cdots + c_{i-1}+2})}}\,\,\,\,\,\,\,\,\,\,, s_{c+1 + \cdots + c_{i-1}+2} , \cdots, \,\,\,\,\,\,\,\,\, \mathclap{\underbrace{*, \ldots, *}_{\ell(s_{c_1+\cdots+c_i})}}\,\,\,\,\,\,\,\,\,\, , s_{c_1+\cdots+c_i}) \in F_r/\Sigma_r
\end{align*}
where the $*$ denote the elements of $\ul{r}$. The map $\Upsilon$ obtained by extending this linearly is visibly an isomorphism: it even gives a bijection between the natural bases.

To verify that $\Upsilon$ intertwines the maps \eqref{eq:OtherForm} (when $T=S$) and \eqref{eq:XXX} we first observe that the target of these maps has a right $\Sigma_S$-action by permuting the $I^\vee$'s, and that these maps are $\Sigma_S$-equivariant if
\begin{enumerate}[(i)]
\item $k\{d_0, d_1, d_2, \ldots\}^{\otimes S} \otimes k\{\Sigma_S^{ad}\}$ is endowed with the right $\Sigma_S$-action by precomposition on $\Sigma_S^{ad}$ (and nothing on the first tensor factor), and

\item $F_r/\Sigma_r$ is endowed with the right $\Sigma_S$-action induced by precomposition on $F_r \subset \Sigma^{ad}_{S \sqcup \ul{r}}$, which does indeed preserve the subset $F_r$.
\end{enumerate}
The formula for $\Upsilon(\left(\bigotimes_{s \in S} d_{\ell(s)}\right) \otimes \sigma)$ above is the result of precomposing the element $\Upsilon(\left(\bigotimes_{s \in S} d_{\ell(s)}\right) \otimes \mathrm{Id}_S)$, i.e.\
$$(\underbrace{*, \ldots, *}_{\ell(s_1)}, s_1)(\underbrace{*, \ldots, *}_{\ell(s_2)}, s_2) \cdots (\underbrace{*, \ldots, *}_{\ell(s_{c_1})}, s_{c_1}) \cdots (\,\,\,\,\,\,\,\,\,\, \mathclap{\underbrace{*, \ldots, *}_{\ell(s_{c_1+\cdots+c_i})}}\,\,\,\,\,\,\,\,\,\, , s_{c_1+\cdots+c_i}) \in F_r/\Sigma_r,$$
with $\sigma$, so $\Upsilon$ is also $\Sigma_S$-equivariant for these actions. To check that $\Upsilon$ intertwines \eqref{eq:OtherForm} and \eqref{eq:XXX} it therefore suffices to shows that it does so on elements of the form $\left(\bigotimes_{s \in S} d_{\ell(s)}\right) \otimes \mathrm{Id}_S$. As the image of this element under \eqref{eq:OtherForm} is the cup product of the $d_{\ell(s)}$'s, by taking cup products it suffices to show that class $d_r$ is the image under \eqref{eq:XXX} of the $(r+1)$-cycle $(*, \ldots, *, \cdot) \in F_r/\Sigma_r \subset \Sigma^{ad}_{\{\cdot\} \sqcup \ul{r}}/\Sigma_r$, which it is by definition.
\end{proof}

\begin{corollary}\label{cor:IdOnBrauerCat}
For $i < 2p$ there is an identification
$$H^{i}(\GL ; I^{\otimes S} \otimes (I^\vee)^{\otimes T})\cong H^0(\GL ; I^{\otimes S} \otimes (I^\vee)^{\otimes T} \otimes X^i)$$
of functors from the upward walled Brauer category to $k$-modules.
\end{corollary}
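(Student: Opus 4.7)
The plan is to assemble the identification by composing the two explicit descriptions already established. On the one hand, Theorem \ref{thm:StabCohTensorPowers} provides a natural isomorphism of functors from $\mathsf{uwBr}$ to graded $k$-modules
$$\Psi_{S,T}: \Gamma_{k}[x]^{\otimes S} \otimes k\{\mathrm{Bij}(T,S)\} \overset{\sim}\lra H^*(\GL; I^{\otimes S} \otimes (I^\vee)^{\otimes T}),$$
in which $x^{[r]}$ sits in degree $2r$. On the other hand, the preceding lemma provides, for $\bullet < 2p$, a natural isomorphism of $\mathsf{uwBr}$-functors
$$k\{d_0, d_1, d_2, \ldots\}^{\otimes S} \otimes k\{\mathrm{Bij}(T,S)\} \overset{\sim}\lra H^0(\GL; I^{\otimes S} \otimes (I^\vee)^{\otimes T} \otimes X^{\bullet}),$$
in which $d_r$ also sits in degree $2r$. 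Consequently, in each degree $i < 2p$ the two sides of the claimed identification are both identified with the free $k$-module on pairs $(\ell, \sigma)$ with $\ell : S \to \bN$ satisfying $\sum_s 2 \ell(s) = i$ and $\sigma \in \mathrm{Bij}(T,S)$.

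What remains is to verify that the evident $k$-linear bijection
$$\prod_s x_s^{[\ell(s)]} \otimes \sigma \longleftrightarrow \bigotimes_s d_{\ell(s)} \otimes \sigma$$
between the two sources is $\mathsf{uwBr}$-natural. Both sources are equipped with the same formal description of the $\mathsf{uwBr}$-action, as spelled out in Section \ref{sec:uwBr}: a morphism $(f,g,m) : (S,T) \to (U,V)$ relabels strand positions via $f$ and $g$, adjusts the bijection factor via $f, g$, and $m^{-1}$, and fills in the fresh strands $U \setminus f(S)$ with the trivial label. That trivial label is $x^{[0]} = 1$ on the $\Gamma_k[x]$ side and $d_0$ on the other side; by construction $d_0$ is the coevaluation element in $H^0(\GL; I \otimes I^\vee)$, which is precisely the image of $1$ under the $\mathsf{uwBr}$-action in the target description, so the correspondence $x^{[r]} \leftrightarrow d_r$ intertwines the two actions.

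Composing the three natural isomorphisms produces the claimed identification of $\mathsf{uwBr}$-functors in degrees $i < 2p$. I expect no genuine obstacle: the only point that needs care is the compatibility of the two $\mathsf{uwBr}$-actions, and the constructions in Section \ref{sec:uwBr} and in the preceding lemma have been set up precisely so that this compatibility is automatic from the definition of $d_0$ as the coevaluation. The result is then pure bookkeeping.
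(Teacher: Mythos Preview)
Your proposal is correct and follows exactly the paper's approach: the paper's proof is the single line ``Identify the domain of \eqref{eq:OtherForm} with the domain of \eqref{eq:FF} via $d_i \mapsto x^{[i]}$,'' and you have simply spelled out what that sentence means, including the check that the $\mathsf{uwBr}$-actions on the two domains agree under this identification.
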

\begin{proof}
Identify the domain of \eqref{eq:OtherForm} with the domain of \eqref{eq:FF} via $d_i \mapsto x^{[i]}$.
\end{proof}

\section{Proof of Theorem \ref{thm:mainFin}: the general case}\label{sec:GeneralCase}

\subsection{Some semisimplicity}\label{sec:Semisimplicity}

As long as $p > n,m$ the algebra $k[\Sigma_n \times \Sigma_m]$ is semisimple, and for partitions $\lambda \vdash n$ and $\mu \vdash m$ the external tensor products of Specht modules $S^\lambda \otimes S^\mu$ form a complete set of simple modules. Now $\Sigma_n \times \Sigma_m$ acts on $V_{[n,m]}$, and so defining
$$S_{\lambda, \mu}(V) := \Hom_{k[\Sigma_n \times \Sigma_m]} ( S^\lambda \otimes S^\mu, V_{[n,m]})$$
the evaluation map
$$\bigoplus_{\lambda, \mu} S^\lambda \otimes S^\mu \otimes S_{\lambda, \mu}(V) \lra V_{[n,m]}$$
is an isomorphism.

As a final ingredient we should like to know that the quotient map
\begin{equation}\label{eq:quot}
q: V^{\otimes n} \otimes (V^\vee)^{\otimes m} \lra V_{[n,m]}
\end{equation}
is split as a map of $\GL(V)$-modules. Unfortunately this is not generally true: in the exact sequence
\begin{equation}\label{eq:NonSplit}
0 \lra k \overset{coev}\lra V \otimes V^\vee \overset{q}\lra V_{[1,1]} \lra 0
\end{equation}
we have $ev : H_0(GL(V) ; V \otimes V^\vee) \overset{\sim}\to k$, and $ev \circ coev : k \to k$ is multiplication by $\dim(V)$; thus there is an exact sequence
$$\cdots \lra H_1(\GL(V) ; V_{[1,1]}) \overset{\partial}\lra k \overset{-\cdot\dim(V)}\lra k \overset{q_*}\lra H_0(\GL(V) ; V_{[1,1]}) \lra 0$$
and so if $\dim(V) \equiv 0 \mod p$ then $\partial$ is nontrivial and so \eqref{eq:NonSplit} cannot be $\GL(V)$-equivariantly split. However, we have the following partial result, which will suffice.

\begin{proposition}\label{prop:QuotSplit}
If $\dim(V) \equiv \tfrac{p+1}{2} \mod p$ and $n+m \leq \tfrac{p+1}{2}$ then the quotient map \eqref{eq:quot} is split as a map of $\GL(V)$-modules.
\end{proposition}
\begin{proof}
Let $\delta := \dim(V)$. In this proof we will use a result from the literature concerning the walled Brauer algebra $\mathcal{B}_{n,m}(\delta)$, i.e.\ the endomorphism algebra of the object $(\ul{n}, \ul{m})$ in the $k$-linear walled Brauer category $\mathsf{wBr}_\delta$ with parameter $\delta \in k$, which we described in Section \ref{sec:FullWBr}. Recall that there are commuting actions of $\mathcal{B}_{n,m}(\delta)$ and $k[\GL(V)]$ on $V^{\otimes n} \otimes (V^\vee)^{\otimes m}$.

Let $I \subset \mathcal{B}_{n,m}(\delta)$ denote the right sub-$\mathcal{B}_{n,m}(\delta)$-module spanned, as a $k$-module, by those walled Brauer diagrams which contain at least one matched pair, i.e.\ contain an arc which crosses the wall. This is a right sub-$\mathcal{B}_{n,m}(\delta)$-module as such diagrams must contain a wall-crossing arc at the left-hand end, and this is clearly preserved by right-multiplication. The exact sequence $0 \to I \to \mathcal{B}_{n,m}(\delta) \to \mathcal{B}_{n,m}(\delta)/I \to 0$ yields an exact sequence
$$\begin{tikzcd}[column sep=4pt]
   I \otimes_{\mathcal{B}_{n,m}(\delta)}(V^{\otimes n} \otimes (V^\vee)^{\otimes m}) \rar
             \ar[draw=none]{d}[name=X, anchor=center]{}
    & \mathcal{B}_{n,m}(\delta) \otimes_{\mathcal{B}_{n,m}(\delta)}(V^{\otimes n} \otimes (V^\vee)^{\otimes m}) \ar[rounded corners,
            to path={ -- ([xshift=2ex]\tikztostart.east)
                      |- (X.center) \tikztonodes
                      -| ([xshift=-2ex]\tikztotarget.west)
                      -- (\tikztotarget)}]{dl}[at end]{} \\      
   (\mathcal{B}_{n,m}(\delta)/I) \otimes_{\mathcal{B}_{n,m}(\delta)}(V^{\otimes n} \otimes (V^\vee)^{\otimes m}) \rar & 0
\end{tikzcd}$$
and the definition of $I$ identifies the quotient map with \eqref{eq:quot}. Thus in order to show that \eqref{eq:quot} is $\GL(V)$-equivariantly split, it suffices to show that $\mathcal{B}_{n,m}(\delta) \to \mathcal{B}_{n,m}(\delta)/I$ is split as a map of right $\mathcal{B}_{n,m}(\delta)$-modules.

For the latter is suffices that the algebra $\mathcal{B}_{n,m}(\delta)$ be semisimple, and for this we apply a result of Andersen--Stroppel--Tubbenhauer \cite{AST}. In the notation of that paper (Conventions 3.11 of loc.\ cit.), as $\dim(V) \equiv \tfrac{p+1}{2} \mod p$ (and $p$ is odd) we have $\delta_p = \tfrac{p+1}{2}$, so $\delta_p \neq 0$ and hence \cite[Theorem 6.1 (1)]{AST} applies and says that $\mathcal{B}_{n,m}(\delta)$ is semisimple providing $m+n \leq \min\{\delta_p+1, p-\delta_p+1\} = \tfrac{p+1}{2}$. (Note that our assumption $\dim(V) \equiv \tfrac{p+1}{2} \mod p$ was made to maximise the value of $m+n$ for which their result applies.)
\end{proof}

\subsection{Proof of Theorem \ref{thm:mainFin}}\label{sec:ProofGeneralCase}

To prove Theorem \ref{thm:mainFin}, using Corollary \ref{cor:GLvSL} it suffices to identify
\begin{equation}\label{eq:Reduced}
H^i(\GL(V) ; V_{[n,m]}) \cong [V_{[n,m]} \otimes X^i]^{\GL(V)}
\end{equation}
as $k[\Sigma_n \times \Sigma_m]$-modules, for $i < 2p$ and $n+m \leq \tfrac{p+1}{2}$ and all large enough $\dim(V)$. Applying the exact functor $\Hom_{k[\Sigma_n \times \Sigma_m]}(S^\lambda \otimes S^\mu, -)$ then gives the claimed result.

By the results of Section \ref{sec:Stability} the two sides stabilise with $\dim(V)$, so it suffices to establish this identity for all large enough $\dim(V)$ with $\dim(V) \equiv \tfrac{p+1}{2} \mod p$. In this case by Proposition \ref{prop:QuotSplit} and the assumption $n+m \leq \tfrac{p+1}{2}$ the quotient map in the sequence
$$\bigoplus_{i=1}^n \bigoplus_{j=1}^m V^{\otimes n-1} \otimes (V^\vee)^{\otimes m-1} \to V^{\otimes n} \otimes (V^\vee)^{\otimes m} \lra V_{[n,m]} \lra 0$$
is split as $\GL(V)$-modules and hence this sequence remains exact after applying $H^i(\GL(V) ; -)$ or $[- \otimes X^i]^{\GL(V)}$. Combining this with the natural isomorphism $H^i(\GL(V) ; -) \cong [- \otimes X^i]^{\GL(V)}$ (for $i < 2p$) of functors on the upward Brauer category, given by Corollary \ref{cor:IdOnBrauerCat}, yields the isomorphism \eqref{eq:Reduced} as required.

\begin{remark}[Stability range]\label{rem:StabRangeThmC}
As explained in Section \ref{sec:stab}, $H^i(\SL(V) ; V_{[n,m]})$ is in the stable range as long as $2i \leq \dim(V)-2-(n+m)$. One can verify in a similar way that $[V_{[n,m]} \otimes X^i]^{\SL(V)} = H^0(\SL(V) ; V_{[n,m]} \otimes X^i)$ is in the stable range when $i < 2p$ and $0 \leq \dim(V) -2-(n+m+i)$. Thus Theorem \ref{thm:mainFin} holds as long as $i < 2p$ and $\dim(V) \geq 2i+2+n+m$.
\end{remark}

\part{Stable cohomology of congruence subgroups}

For the rest of the paper all cohomology will be taken with $\bF_p$-coefficients, which we will often omit from the notation.

\section{Recollections}\label{sec:Recollections}

\subsection{Completed cohomology}
Following Calegari and Emerton \cite{CEStability} we consider the system of congruence subgroups defined as the kernels
\begin{equation}\label{eq:Extension}
1 \lra \SL_n(\bZ, p^r) \lra \SL_n(\bZ) \lra \SL_n(\bZ/p^r) \lra 1
\end{equation}
as a pro-group, $\{\SL_n(\bZ, p^r)\}_r$, and let the associated completed ($\bF_p$-)cohomology\footnote{We use the notation $\widetilde{H}$ for completed cohomology, following Calegari and Emerton, and pre-emptively apologise for the confusion with reduced cohomology that it will no doubt cause topologists.}
$$\widetilde{H}^i(\SL_n(\bZ)) := \colim_{r \to \infty} H^i(\SL_n(\bZ, p^r) )$$
be the cohomology of this pro-group. It of course depends on the (implicit) prime $p$. 

We also consider the pro-(finite group) $\SL_n(\bZ_p) := \{\SL_n(\bZ/p^r)\}_r$, whose cohomology we call the continuous cohomology of $\SL_n(\bZ_p)$. Taking the limit of this pro-(finite group) gives the usual group $\SL_n(\bZ_p)$ with the topology induced from the $p$-adic topology on $\bZ_p$, and the pro-group plays for us the role of this topological group: working with the pro-object allows us to form the ``classifying pro-space'' $\{B\SL_n(\bZ/p^r)\}_r$, which should be thought of as an implementation of the classifying space of the group $\SL_n(\bZ_p)$ which takes into account the topology.

By the main theorem of \cite{CEStability} the completed cohomology groups enjoy homological stability with respect to $n$, and (hence) the outer action of the pro-(finite group) $ \{\SL_n(\bZ/p^r)\}_r$ on these groups is trivial in the stable range. This also follows from slightly more recent work of Iwasa \cite[Theorem 1.3]{Iwasa}. We write
$$\widetilde{H}^i(\SL(\bZ)) := \lim_{n \to \infty} \widetilde{H}^i(\SL_n(\bZ))$$
for the stable completed cohomology. We may identify it with the cohomology of the pro-space
$$\hocolim_{n \to \infty} \, \{B\SL_n(\bZ, p^r)\}_r,$$
the homotopy colimit formed in pro-spaces. Generally speaking (homotopy) limits and colimits in pro-spaces are not formed object-wise, and so this should not be confused with the pro-space $\{\hocolim_{n \to \infty} B\SL_n(\bZ, p^r)\}_r$, which will play no role. For background on the homotopy theory of pro-spaces see \cite[Section 2 and Appendices]{ArtinMazur} and \cite{IsaksenPro}.

Taking colimits of the Leray--Hochschild--Serre spectral sequences of the extensions \eqref{eq:Extension} gives a spectral sequence
$$E_2^{s,t}  = H^s_\mathrm{cts}(\SL_n(\bZ_p) ; \widetilde{H}^t(\SL_n(\bZ))) \Longrightarrow H^{s+t}(\SL_n(\bZ)),$$
and in the stable range the coefficient system is untwisted. Here the continuous cohomology refers to 
$$H^s_\mathrm{cts}(\SL_n(\bZ_p) ; M) := \colim_{r \to \infty} H^s(\SL_n(\bZ/p^r) ; M)$$
and is defined for $\SL_n(\bZ_p)$-modules $M$ for which the action factors through $\SL_n(\bZ_p) \to \SL_n(\bZ/p^m)$ for some $m$ (which allows us to form the colimit).

More generally, considering the extension of pro-groups
$$1 \lra \{\SL_n(\bZ, p^{k+r})\}_r \lra \SL_n(\bZ, p^k) \lra \{\SL_n(\bZ/p^{r+k}, p^k)\}_r \lra 1$$
gives a spectral sequence
\begin{equation}\label{eq:CongruenceSS}
E_2^{s,t}  = H^s_\mathrm{cts}(\SL_n(\bZ_p, p^k) ; \widetilde{H}^t(\SL_n(\bZ) )) \Longrightarrow H^{s+t}(\SL_n(\bZ, p^k)),
\end{equation}
again untwisted in a stable range, whose $E_2$-term is quite accessible for $k > 0$ as then $\SL_n(\bZ_p, p^k)$ is a $p$-adic analytic group (see Section \ref{sec:AnalyticGps}).

\subsection{Relation to algebraic $K$-theory}\label{sec:RelKThy}
We rephrase \cite[Section 2.3]{Calegari}. Let $\K(-)$ denote the algebraic $K$-theory spectrum, so that there are acyclic maps $B\GL(-) \to \Omega_0^\infty \K(-)$. Write $\K^\mathrm{cts}(\bZ_p) := \holim\limits_{r \to \infty} \K(\bZ/p^r)$ for the continuous $K$-theory of $\bZ_p$, so there are induced maps of pro-spectra
$$\K(\bZ) \lra \K(\bZ_p) \lra \K^\mathrm{cts}(\bZ_p) \lra \{\K(\bZ/p^r)\}_r;$$
here and below we implicitly identify objects with constant pro-objects. The middle map is comparing the $K$-theory of $\bZ_p$ considered as a discrete ring with its continuous $K$-theory. Denote the 1-connected cover\footnote{We use the standard notation $\tau_{> d} X$ for the $d$-connected cover of a space or spectrum $X$, and $\tau_{\leq d} X$ for its $d$-th truncation. We allow ourselves to use both strict and non-strict inequalities in this style of notation, and also use $\tau_{[a,b]} X = \tau_{\leq b} \tau_{\geq a} X$.} of the algebraic $K$-theory spectrum by $\SK(-) = \tau_{>1}\K(-)$, and write
$$\kappa : \SK(\bZ) \lra \SK(\bZ_p)$$
for the map induced by $\bZ \to \bZ_p$, which we call the \emph{completion map}. Using this we form the diagram of pro-spaces
\begin{equation*}
\begin{tikzcd}[column sep=small]
\hocolim\limits_{n \to \infty}\{B\SL_n(\bZ, p^r)\}_r \dar \rar& \{\mathrm{hofib}(\Omega^\infty\kappa''_r)\}_r \dar & \mathrm{hofib}(\Omega^\infty\kappa') \dar \lar & \mathrm{hofib}(\Omega^\infty \kappa) \dar \lar\\
\hocolim\limits_{n \to \infty} B\SL_n(\bZ) \dar \rar& \Omega^\infty \SK(\bZ) \arrow[r, equals] \dar{\{\Omega^\infty\kappa''_r\}_r} & \Omega^\infty \SK(\bZ) \dar{\Omega^\infty\kappa'} \arrow[r, equals]& \Omega^\infty \SK(\bZ) \dar{\Omega^\infty \kappa}\\
\hocolim\limits_{n \to \infty} \{B\SL_n(\bZ/ p^r)\}_r \rar& \{\Omega^\infty \SK(\bZ/p^r)\}_r  & \Omega^\infty \SK^\mathrm{cts}(\bZ_p) \lar & \Omega^\infty \SK(\bZ_p) \lar
\end{tikzcd}
\end{equation*}
whose columns are fibration sequences. (Fibres, and more generally finite limits, of pro-spaces may be computed object-wise, see \cite[Appendix (4.1)]{ArtinMazur}.) All columns but the first are fibrations of pro-(infinite loop spaces), so the coefficient systems given by the cohomology of the fibres is trivial. The same property holds for the first column by the main theorem of \cite{CEStability}.

\begin{lemma}\label{lem:HorizIso}
All the horizontal maps in this diagram induce isomorphisms on $\bF_p$-cohomology.
\end{lemma}
\begin{proof}
The map in the middle row is a cohomology isomorphism, because it arises as a cover of the acyclic map $B\GL(\bZ) \to \Omega_0^\infty \K(\bZ)$.

For the left-hand map of the bottom row, the natural map of pro-spaces
$$\hocolim\limits_{n \to \infty} \{B\SL_n(\bZ/ p^r)\}_r \lra  \{ \hocolim\limits_{n \to \infty} B\SL_n(\bZ/ p^r)\}_r \simeq \{  B\SL(\bZ/ p^r)\}_r$$
is a cohomology isomorphism, because for each $r$ the sequence $B\SL_n(\bZ/ p^r)$ enjoys homological stability with respect to $n$, with a stability range that is independent of $r$ (this follows from \cite[Theorem 4.11]{vanderkallen}). Furthermore the maps $B\SL(\bZ/ p^r) \to \Omega^\infty \SK(\bZ/p^r)$ are cohomology isomorphisms, as in the first paragraph of this proof.

For the middle map of the bottom row, the argument of \cite[Sublemma 2.18]{Calegari} shows that for each $i$ the dimension of $H_i(\Omega^\infty \SK(\bZ/p^r) )$ is finite and bounded independently of $r$. By \cite[Theorem B]{GoerssLimit}, using that the homology of infinite loop spaces are abelian Hopf algebras, it follows that the map
$$H_*(\Omega^\infty \SK^\mathrm{cts}(\bZ_p)) \lra \lim_{r \to \infty} H_*(\Omega^\infty \SK(\bZ/p^r) )$$
is an isomorphism (in principle the limit is taken in the category of $\bF_p$-coalgebras, but the uniform boundedness means that the limit in $\bF_p$-modules has finite type, and so inherits a coalgebra structure and agrees with the limit in $\bF_p$-coalgebras). Dualising shows that this map is a $\bF_p$-cohomology isomorphism.

The right-hand map of the bottom row is a $\bF_p$-cohomology isomorphism by \cite[Theorem C (iii)]{HM1} applied with $k=\bF_p$ and $A = W(k) = \bZ_p$. That theorem is formulated as an equivalence of $p$-adically complete spectra, but this yields an equivalence of their $p$-adically complete infinite loop spaces, and therefore an $\bF_p$-(co)homology equivalence between their infinite loop spaces.

For each of the maps in the top row of the diagram, we use that for each column the action of the fundamental group of the base on the cohomology of the fibre is trivial, then apply the Zeeman comparison theorem to each of the maps of Serre spectral sequences between the columns.
\end{proof}

As $\widetilde{H}^*(\SL(\bZ))$ is the cohomology of the pro-space $\hocolim_{n \to \infty}\{B\SL_n(\bZ, p^r)\}_r$, the top row of the diagram provides an isomorphism
\begin{equation}\label{eq:FibKappaIsCompletedCoh}
\widetilde{H}^*(\SL(\bZ)) \cong H^*(\mathrm{hofib}(\Omega^\infty\kappa)),
\end{equation}
relating the stable completed cohomology to the cohomology of the fibre of the completion map in $K$-theory. In the following two sections we will explain how the latter may be determined, at least in degrees $* < 2p-2$, using deep results in algebraic $K$-theory. We denote by $\K(-;\bZ_p) := \holim_{m \to \infty} \K(-)/p^m$ the $p$-adic completion of the algebraic $K$-theory spectrum $\K(-)$, which we shall use to formulate things.

\subsection{The completion map}\label{sec:CompletionMap}

In this section we will describe the structure of the completion map
$$\kappa : \SK(\bZ ; \bZ_p) \lra \SK(\bZ_p ; \bZ_p)$$
in stable homotopy theoretic terms, for $p$ an odd prime. We first recall some well-known spectra and their homotopy groups: a source for this is \cite[pp.\ 204-206]{AdamsBook} and \cite[Section 5]{AdamsVect}. We will make use of the connective real and complex topological $K$-theory spectra $ko$ and $ku$, and their periodic versions $KO$ and $KU$, all of which we implicitly complete at $p$. We will also make use of the (connective, $p$-complete) image-of-$J$ spectrum $j$, defined by the fibre sequence
$$j \lra ku \overset{\psi^r-1}\lra \tau_{>0} ku$$
for $r$ an integer which topologically generates $\bZ_p^\times$ and $\psi^r$ the corresponding Adams operation. 

The homotopy groups of these spectra are as follows. We have $\pi_*(ku) = \bZ_p[u]$ for $u \in \pi_2(ku)$ the Bott element, and $\pi_*(ko) = \bZ_p[u^2]$ considered as a subring of $\pi_*(ku)$ via complexification $c : ko \to ku$. The Adams operation $\psi^r$ acts on the Bott class $u \in \pi_2(ku)$ by multiplication by $r$, so the map $\psi^r-1 : ku \to \tau_{>0}ku$ acts on homotopy groups as $u^{k} \mapsto(r^{k}-1) \cdot u^{k}$. As $r \in \bZ_p^\times$ is a topological generator, $(r^{k}-1)$ is a unit in $\bZ_p$ if $p-1 \nmid k$ and is $p \cdot i$ times a unit if $k= (p-1) \cdot i$, giving
$$\pi_*(j) = \begin{cases}
\bZ_p & *=0\\
\bZ_p/p \cdot i & * = i(2p-2)-1\\
0 & \text{else}.
\end{cases}$$

\subsubsection{$K$-theory of $\bZ_p$} By the main theorem of \cite{BokstedtMadsen} there is an equivalence 
\begin{equation}\label{eq:BokstedtMadsen}
\K(\bZ_p ; \bZ_p) \simeq j \oplus \Sigma j \oplus \Sigma^3 ku.
\end{equation}
At the level of homotopy groups this gives
$$SK_*(\bZ_p ; \bZ_p) =  \begin{cases}
\bZ/p & *=2p-3\\
0 & \text{else}
\end{cases} \oplus  \begin{cases}
\bZ_p & *=3,5,7,9,\ldots\\
0 & \text{else}
\end{cases} \text{ for } * < 2p-2.$$
Note that the contributions of $\pi_0(j)=\bZ_p$ and $\pi_1(\Sigma j) = \bZ_p$ to \eqref{eq:BokstedtMadsen} are removed by passing to the 1-connected cover $\SK(\bZ_p;\bZ_p)$.

\subsubsection{$K$-theory of $\bZ$ and the completion map} The structure of $K_*(\bZ;\bZ_p)$ (at irregular primes) is far more complicated. Following Weibel's survey \cite{WeibelIntegers}, we have
$$SK_*(\bZ ; \bZ_p) \cong \begin{cases}
\text{order } |\bZ_p/\mathrm{Num}(B_{2k}/4k)| & *=4k-2\\
0 & *=4k-1\\
? & *=4k\\
\bZ_p & * = 4k+1\\
\end{cases}  \text{ for } * < 2(2p-3)$$
where $B_{2k}$ denotes the $2k$th Bernoulli number, and $?$ is unknown but finite. If the Vandiver conjecture holds for $p$, then in degrees $\equiv 0 \mod 4$ these groups vanish and in degrees $\equiv 2\mod 4$ they are cyclic. The Vandiver conjecture has been checked for $p \leq 2^{31}$, cf.\ \cite{HHO}. 

This can be extracted from \cite{WeibelIntegers} as follows: In degrees $\equiv 1,3 \mod 4$ it follows from Theorem 1 and the description of $w_i(\bQ)$ in Lemma 27; in degrees $\equiv 2 \mod 4$ it follows from Corollary 95; in degrees $\equiv 0 \mod 4$ it follows from Theorem 6; the claim involving the Vandiver conjecture follows from Corollary 107.

We wish to show that the truncations 
$$\tau_{[2, 2p-3]} \K(\bZ; \bZ_p) = \tau_{\leq 2p-3} \tau_{\geq 2} \K(\bZ; \bZ_p) = \tau_{\leq 2p-3}\SK(\bZ; \bZ_p)$$
and $\tau_{[2, 2p-3]} \K(\bZ_p; \bZ_p)$ are coproducts of Eilenberg--MacLane spectra, and thereby understand to some extent the map induced by $\kappa$ between these truncations by understanding its effect on homotopy groups. We will use the following lemma to control stable homotopy classes of maps between Eilenberg--MacLane spectra. For $\bZ$-modules $A$ and $B$, the Universal Coefficient Theorem identifies $[HA, HB]$ with $\Hom_{\bZ}(A,B)$, and $[HA, \Sigma HB]$ with $\Ext^1_{\bZ}(A, B)$, functorially in $A$ and $B$. 

\begin{lemma}
If $A$ and $B$ are finitely-generated $\bZ_p$-modules, then $[HA, \Sigma^i HB]=0$ for $1 < i < 2p-2$.
\end{lemma}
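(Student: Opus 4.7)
The plan is to reduce the claim, via short resolutions of $A$ and $B$, to Cartan's classical identification of $H^*(H\bZ;\bF_p)$ with $\mathcal{A}_p/\mathcal{A}_p\beta$, where $\mathcal{A}_p$ denotes the mod-$p$ Steenrod algebra.

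First I would resolve $A$. Since $\bZ_p$ is a PID and $A$ is finitely generated, $A$ admits a length-$1$ free resolution $0\to \bZ_p^m\to \bZ_p^n\to A\to 0$, realising $HA$ as the cofibre of a map between finite wedges of copies of $H\bZ_p$. The associated long exact sequence in $[-,\Sigma^\bullet HB]$ sandwiches $[HA,\Sigma^i HB]$ between copies of $[H\bZ_p,\Sigma^{i-1}HB]$ and of $[H\bZ_p,\Sigma^i HB]$. For $1<i<2p-2$ both shift indices lie in the open range $(0,2p-2)$, so the problem reduces to proving that $[H\bZ_p,\Sigma^j HB]=0$ for $0<j<2p-2$ and all finitely-generated $\bZ_p$-modules $B$. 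Since $HB$ is $p$-complete and $H\bZ\to H\bZ_p$ is $p$-completion, this group identifies with $H^j(H\bZ;B)$.

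Next, splitting $B$ into cyclic summands, I would handle $B=\bZ_p$ and $B=\bZ/p^k$ separately, with $B=\bF_p$ as the base case. The mod-$p$ case is immediate from Cartan's computation: the admissible basis of $\mathcal{A}_p/\mathcal{A}_p\beta$ has no elements in positive degrees below $|P^1|=2p-2$. For $B=\bZ_p$ I would run the Bockstein long exact sequence from $0\to\bZ_p\xrightarrow{p}\bZ_p\to\bF_p\to 0$: in the range $0<j<2p-2$ the two adjacent mod-$p$ groups both vanish, so multiplication by $p$ is an isomorphism on $H^j(H\bZ;\bZ_p)$; since this group is derived $p$-complete (as a homotopy group of the $p$-complete spectrum $F(H\bZ,H\bZ_p)$), it must vanish. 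For $B=\bZ/p^k$, the long exact sequence arising from $0\to\bZ_p\xrightarrow{p^k}\bZ_p\to\bZ/p^k\to 0$ presents $H^j(H\bZ;\bZ/p^k)$ as an extension of $H^{j+1}(H\bZ;\bZ_p)[p^k]$ by $H^j(H\bZ;\bZ_p)/p^k$, both of which I need to see vanish in the required range.

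The only delicate point is the boundary index $j=2p-3$ in the $\bZ/p^k$ case, where the upper flanking group is the $p^k$-torsion in $H^{2p-2}(H\bZ;\bZ_p)$, not forced to vanish by the previous step alone. Here I would invoke one further piece of Bockstein data: the vanishing of $H^{2p-3}(H\bZ;\bF_p)$ shows that multiplication by $p$ is injective on $H^{2p-2}(H\bZ;\bZ_p)$, so its $p^k$-torsion is trivial. This is the main technical obstacle, and is the only step that pushes slightly beyond the bare vanishing range of Cartan's computation; everything else is a routine chase.
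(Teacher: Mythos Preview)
Your argument is correct in substance and differs from the paper's mainly in organisation. You resolve $A$ first to reduce to $A=\bZ_p$, then handle $B$ in the order $\bF_p \Rightarrow \bZ_p \Rightarrow \bZ/p^k$, using derived $p$-completeness of $F(H\bZ,H\bZ_p)$ for the middle step. The paper instead decomposes $A$ into cyclics, handles $B=\bZ/p$ for all $A$ first, bootstraps to $B=\bZ/p^r$ by Bockstein, and finally reaches $B=\bZ_p$ via the Milnor sequence $H\bZ_p = \holim_r H\bZ/p^r$. Your derived-completeness manoeuvre is arguably cleaner than the paper's $\lim^1$ check, and your handling of the boundary index $j=2p-3$ (noting that $H^{2p-3}(H\bZ;\bF_p)=0$ forces $H^{2p-2}(H\bZ;\bZ_p)$ to be $p$-torsion-free) is a nice observation the paper does not need because of its different route.

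There is one small slip. At $j=1$ in the $B=\bZ_p$ step, the claim that ``both adjacent mod-$p$ groups vanish'' is false: $H^0(H\bZ;\bF_p)=\bF_p$. Nonetheless your conclusion survives: the boundary map $H^0(H\bZ;\bF_p)\to H^1(H\bZ;\bZ_p)$ sends the generator (the reduction map $H\bZ_p\to H\bF_p$) to its composite with $H\bF_p\to\Sigma H\bZ_p$, which is zero as two consecutive maps in a cofibre sequence. So multiplication by $p$ is still an isomorphism on $H^1(H\bZ;\bZ_p)$, and your derived-completeness argument applies. With this one-line fix the proof is complete.
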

\begin{proof}
Recall that $[H\bZ/p, \Sigma^*H\bZ/p]$ is by definition the $\bZ/p$-Steenrod algebra $\mathcal{A}_p$, and (for $p$ odd) this is generated under composition by the Bockstein map $\beta : H\bZ/p \to \Sigma H\bZ/p$ as well as operations $\mathcal{P}^n : H\bZ/p \to \Sigma^{2n(p-1)} H\bZ/p$ for $n \geq 1$, subject to certain relations which need not concern us. Recall that there is a cofibre sequence $H\bZ_p \overset{p}\to H\bZ_p \overset{\rho}\to H\bZ/p \overset{\partial}\to \Sigma H\bZ_p$ and the Bockstein is defined to be $\beta := (\Sigma\rho) \circ \partial$. There is a commutative diagram
\begin{equation*}
\begin{tikzcd}[column sep=10pt]
& & & {[ H\bZ/p, \Sigma^{*-1} H\bZ/p]} \dar{- \circ \rho} \arrow[ld, swap, "- \circ \beta"]\\
{[H\bZ_p, \Sigma^* H\bZ/p]} &  {[H\bZ_p, \Sigma^* H\bZ/p]} \arrow[l, swap, "0"] & {[H\bZ/p, \Sigma^* H\bZ/p]} \arrow[l, swap, "- \circ \rho"] & {[ H\bZ_p, \Sigma^{*-1} H\bZ/p]} \arrow[l, swap, "- \circ \partial"]
\end{tikzcd}
\end{equation*}
where the bottom row is exact and is given by applying $[-, \Sigma^* H\bZ/p]$ to this cofibre sequence, and the vertical map is a (degree-shifted) copy of the horizontal map $- \circ \rho$. The bottom row shows that $- \circ \rho$ is surjective, so it follows that the image of $- \circ \partial$ in $\mathcal{A}_p = {[H\bZ/p, \Sigma^* H\bZ/p]}$ is precisely the left $\mathcal{A}_p$-submodule generated by $\beta$, and therefore identifies $[H\bZ_p, \Sigma^* H\bZ/p] \cong  \mathcal{A}_p / \mathcal{A}_p \beta$. By the description of the Steenrod algebra this vanishes in degrees $0 < * < 2p-2$. 

Any finitely-generated $\bZ_p$-module is a finite sum of $\bZ_p$'s and $\bZ/p^r$'s. Applying $[-, \Sigma^*H\bZ/p]$ to the cofibre sequence $H\bZ_p \overset{p^r}\to H\bZ_p \to H\bZ/p^r \to \Sigma H\bZ_p$ gives a short exact sequence
$$0 \longleftarrow  {[H\bZ_p, \Sigma^* H\bZ/p]}  \longleftarrow {[H\bZ/p^r, \Sigma^* H\bZ/p]} \longleftarrow {[ H\bZ_p, \Sigma^{*-1} H\bZ/p]} \longleftarrow 0 $$
so using $[H\bZ_p, \Sigma^* H\bZ/p] \cong  \mathcal{A}_p / \mathcal{A}_p \beta$, which vanishes in degrees $0 < * < 2p-2$, we see that $[H\bZ/p^r, \Sigma^* H\bZ/p]=0$ for $1 < * < 2p-2$. This shows that the claim holds for $B=\bZ/p$ and for all finitely-generated $\bZ_p$-modules $A$. Applying $[HA, \Sigma^*-]$ to the cofibre sequences $H\bZ/p^{r-1} \to H\bZ/p^r \to H\bZ/p \to \Sigma H\bZ/p^{r-1}$, then shows---by induction on $r$---that the claim holds for $B=\bZ/p^r$ and for all finitely-generated $\bZ_p$-modules $A$. 

Finally, writing $H\bZ_p = \holim\limits_r H\bZ/p^r$ and applying $[HA, \Sigma^*-]$ to it gives a Milnor sequence
$$0 \lra \lim_{r \to \infty}{}^1 [HA, \Sigma^{i-1} H\bZ/p^r] \lra [HA, \Sigma^i H\bZ_p] \lra \lim_{r \to \infty} [HA, \Sigma^i H\bZ/p^r] \lra 0.$$
The outer terms vanish for $i > 2$. For $i=2$ the right-hand term vanishes, and the left-hand term is $\lim{_{r \to \infty}^1} \Ext^1_{\bZ}(A, \bZ/p^r)$. As $\mathrm{Ext}^2_\bZ(-,-)$ vanishes identically, $\Ext^1_{\bZ}(A, -)$ preserves epimorphisms and so $\{\Ext^1_{\bZ}(A, \bZ/p^r)\}_r$ is an inverse system of epimorphisms and hence has vanishing $\lim^1$ by the Mittag-Leffler condition.
\end{proof}

We use this lemma to show that $\tau_{[2, 2p-3]} \K(\bZ; \bZ_p)$ and $\tau_{[2, 2p-3]} \K(\bZ_p; \bZ_p)$ are coproducts of Eilenberg--MacLane spectra, by showing that their Postnikov towers must split: assuming a splitting $\tau_{[2, i]} \K(\bZ; \bZ_p) \simeq \bigoplus_{j=2}^i \Sigma^j HK_j(\bZ;\bZ_p)$ has been chosen, there is a pullback
\begin{equation*}
\begin{tikzcd}
\tau_{[2, i+1]} \K(\bZ; \bZ_p) \dar \rar & * \dar\\
\bigoplus_{j=2}^i \Sigma^j HK_j(\bZ;\bZ_p) \rar & \Sigma^{i+2} HK_{i+1}(\bZ;\bZ_p)
\end{tikzcd}
\end{equation*}
but the lower map is nullhomotopic as long as $i < 2p-2$ by the lemma. However, this splitting is not completely canonical: when $i=4k$ the nullhomotopy of the lower map may not be unique (though it is if the Vandiver conjecture holds). The analogous discussion goes through for $\tau_{[2, 2p-3]} \K(\bZ_p; \bZ_p)$, though in this case the splitting is canonical.

Using the lemma again it follows that the map $\kappa$ on truncations is determined by its components
\begin{align*}
\kappa_{4k-2} &: \Sigma^{4k-2} HK_{4k-2}(\bZ;\bZ_p) \lra \Sigma^{4k-1}HK_{4k-1}(\bZ_p;\bZ_p)\\
\kappa_{4k} &: \Sigma^{4k} HK_{4k}(\bZ;\bZ_p) \lra \Sigma^{4k+1}HK_{4k+1}(\bZ_p;\bZ_p)\\
\kappa_{4k+1} &: \Sigma^{4k+1} HK_{4k+1}(\bZ;\bZ_p)\lra \Sigma^{4k+1} HK_{4k+1}(\bZ_p;\bZ_p).
\end{align*}

\subsubsection{The completion map at regular odd primes}\label{sec:CompletionRegular} If $p$ is a regular odd prime then the completion map may be completely described at the level of spectra, improving upon the description in the previous section. Our reference for the following is \cite[Section 3]{RognesRegular}. It follows from the (affirmed) Quillen--Lichtenbaum conjecture that there is an equivalence $\K(\bZ; \bZ_p) \simeq j \oplus \Sigma^5 ko$; similarly, \eqref{eq:BokstedtMadsen} gives an equivalence $\K(\bZ_p ; \bZ_p) \simeq j \oplus \Sigma j \oplus \Sigma^3 ku$. 

\begin{lemma}[Rognes]
Under these equivalences the completion map is the identity on the $j$-summand and on the $\Sigma^5 ko$-summand is the map $\chi: \Sigma^5 ko \to\Sigma^3 ku$ induced by the suspension of the complexification map $\Sigma c : \Sigma ko \to \Sigma ku$ by taking 1-connected covers. 
\end{lemma}
\begin{proof}[Proof sketch]
Following \cite[Section 3]{RognesRegular} we develop the diagram
\begin{equation*}
\begin{tikzcd}[column sep=small]
\K(\bZ;\bZ_p) \rar \dar & \K(\bZ[\tfrac{1}{p}];\bZ_p) \rar{\sim} \dar& \tau_{\geq 0}\K^{\text{\'et}}(\bZ[\tfrac{1}{p}];\bZ_p) \rar{\sim} \dar& j \oplus \Sigma ko \dar{\mathrm{Id}_j \oplus 0 \oplus \Sigma c}\\
\K(\bZ_p;\bZ_p) \rar & \K(\bQ_p;\bZ_p) \rar{\sim} & \tau_{\geq 0}\K^{\text{\'et}}(\bQ_p;\bZ_p) \rar{\sim} & j \oplus \Sigma j \oplus  \Sigma ku
\end{tikzcd}
\end{equation*}
where the right-hand square is as indicated by \cite[Proposition 3.1]{RognesRegular}, the horizontal maps in the middle square are equivalences by the (affirmed) Quillen--Lichtenbaum conjecture in the two cases, and the left-hand square is cartesian by the localisation sequence in $K$-theory. More precisely, the map of horizontal fibres in the left-hand square is identified with $\K(\bZ/p ; \bZ_p) \overset{\sim}\to \K(\bZ_p/p ; \bZ_p) \simeq H\bZ_p$. In each row this copy of $H\bZ_p$ cancels against the lowest homotopy group of $\Sigma ko$ or $\Sigma ku$ (by considering the left-hand square on $\pi_1$), giving the claimed description of the completion map in terms of the lift of $\Sigma c : \Sigma ko \to \Sigma ku$ to 1-connected covers (see first line of the proof of \cite[Theorem 3.8]{RognesRegular}).
\end{proof}

Taking the suspension of the Wood cofibre sequence $\Sigma KO \overset{\eta}\to KO \overset{c}\to KU$ and then taking 1-connective covers gives a cofibre sequence $\Sigma^2 ko \to \Sigma^5 ko \overset{\chi}\to \Sigma^3 ku$. Taking 1-connected covers of the maps in the Lemma, we see that $\kappa$
is identified with $\mathrm{Id}_{\tau_{>1} j} \oplus \chi : \tau_{>1}j \oplus \Sigma^5 ko \to \tau_{>1}j \oplus \tau_{>1} \Sigma j \oplus \Sigma^3 ku$ giving an equivalence 
\begin{equation*}
\mathrm{hofib}(\kappa) \simeq \tau_{>0} j \oplus \Sigma^2 ko.
\end{equation*}

\subsection{Transgressive fibrations}\label{sec:Trans}

The discussion so far gives an analysis of the stable homotopy types of $\tau_{[2, 2p-3]} \K(\bZ; \bZ_p)$ and $\tau_{[2, 2p-3]} \K(\bZ_p; \bZ_p)$, and of the map between them: we now wish to take their associated infinite loop spaces in order to describe the behaviour of the Serre spectral sequence associated to the fibration
$$\Omega^\infty\kappa : \Omega^\infty \SK(\bZ; \bZ_p) \lra \Omega^\infty \SK(\bZ_p; \bZ_p)$$
in degrees $* < 2p-2$.

Let us say that a fibration $\pi: E \to B$ with 0-connected fibre $F$ is \emph{transgressive} (with $\bF_p$-coefficients) if $\pi_1(B)$ acts trivially on $H^*(F;\bF_p)$, and if $H^*(F;\bF_p)$ is freely generated as a graded-commutative $\bF_p$-algebra by a set of classes which are transgressive in the Serre spectral sequence for $\pi$. We say $\pi$ is \emph{trangressive in degrees $*<N$} if the above two conditions hold in this range of cohomological degrees. The class of such fibrations is closed under forming pullbacks, and (by the K{\"u}nneth theorem) is closed under forming products of fibrations. The following lemma gives a class of examples.

\begin{lemma}\label{lem:Transg}
The following fibrations are transgressive in degrees $* < 2p-2$:
\begin{enumerate}[(i)]
\item $\pi: E \to B$ a principal $K(A,n)$-fibration, for $A$ a finitely-generated $\bZ_p$-module.

\item $\alpha : K(A, n) \to K(B, n+1)$ for $A$ and $B$ finitely-generated $\bZ_p$-modules, and $\alpha \in H^{n+1}(K(A,n) ; B) \cong \Ext^1_{\bZ}(A, B) \cong [HA, \Sigma HB]$.

\item $N\cdot \mathrm{Id} + \alpha : K(\bZ_p, n+1) \times K(A, n) \to K(\bZ_p, n+1)$ for $N \in \bZ_p$, $A$ a  finitely-generated $\bZ_p$-module, and $\alpha \in H^{n+1}(K(A,n) ; \bZ_p) \cong \Ext^1_{\bZ}(A, \bZ_p) \cong [HA, \Sigma H\bZ_p]$. 
\end{enumerate}
\end{lemma}
\begin{proof}
For (i) note that the path fibration $PK(A, n+1) \to K(A,n+1)$ is transgressive in degrees $* < n+2p-2$ for $A$ either $\bZ_p$ or $\bZ/p^r$, by the calculation of the cohomology of Eilenberg--MacLane spaces in \cite{SC} (and the fact that $K(\bZ, n+1) \to K(\bZ_p, n+1)$ is a $\bF_p$-cohomology isomorphism). By taking products of fibrations the same holds when $A$ is a finitely-generated $\bZ_p$-module, and by taking pullbacks it holds for all principal $K(A,n)$-fibrations.

For (ii), this fibration arises from delooping the extension $0 \to B \to A' \to A \to 0$ classified by $\alpha$, so is a principal $K(A', n)$-fibration and so is transgressive by (i).

For (iii), if $N=0$ then this fibration is the product of the fibrations $K(\bZ_p, n+1) \to *$ and $\alpha : K(A, n) \to K(\bZ_p, n+1)$. The first is clearly transgressive, and the second is too by (ii). If $N \neq 0$ then the homotopy groups of the fibre $F$ of this fibration sit in an exact sequence
$$ \cdots 0 \lra \pi_{n+1}(F) \lra \bZ_p \overset{N}\lra \bZ_p \lra \pi_n(F) \lra A \lra 0 \lra \pi_{n-1}(F) \lra 0 \cdots$$
so $F \simeq K(A', n)$ for the extension $0 \to \bZ_p/N \to A' \to A \to 0$ determined by $\alpha$ and reduction modulo $N$. Furthermore, the fibration is principal as both maps $\mathrm{Id}$ and $\alpha$ deloop, so (i) applies.
\end{proof}

\begin{corollary}
The fibration 
$$\Omega^\infty\tau_{[2, 2p-3]}\kappa : \Omega^\infty\tau_{[2, 2p-3]} \K(\bZ; \bZ_p) \lra \Omega^\infty\tau_{[2, 2p-3]} \K(\bZ_p; \bZ_p)$$
 is transgressive in degrees $* < 2p-2$.
\end{corollary}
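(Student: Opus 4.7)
The plan is to reduce the assertion to Lemma \ref{lem:Transg} applied summand by summand, using the Postnikov splittings established in Section \ref{sec:IrregPrimesSS}. Concretely, fix once and for all splittings
$$\tau_{[2,2p-3]}\K(\bZ;\bZ_p) \simeq \bigoplus_{j=2}^{2p-3}\Sigma^j HK_j(\bZ;\bZ_p),\qquad \tau_{[2,2p-3]}\K(\bZ_p;\bZ_p)\simeq\bigoplus_{j=2}^{2p-3}\Sigma^j HK_j(\bZ_p;\bZ_p),$$
as discussed there. Under these splittings the lemma from Section \ref{sec:IrregPrimesSS} forces the map $\tau_{[2,2p-3]}\kappa$ to decompose as the coproduct of the three families $\kappa_{4k-2}$, $\kappa_{4k}$, $\kappa_{4k+1}$ described there (the off-diagonal components must vanish because $[HA,\Sigma^i HB]=0$ for $1<i<2p-2$).

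Since wedges and products of spectra coincide, applying $\Omega^\infty$ turns each side into a finite product of Eilenberg--MacLane spaces and turns $\Omega^\infty\tau_{[2,2p-3]}\kappa$ into the product $\prod_k \Omega^\infty\kappa_{4k-2}\times\Omega^\infty\kappa_{4k}\times\Omega^\infty\kappa_{4k+1}$ of fibrations. By the closure of transgressive fibrations under products (noted immediately before Lemma \ref{lem:Transg}, via K\"unneth), it suffices to show that each of these three types of fibrations is transgressive in degrees $*<2p-2$.

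For $\Omega^\infty\kappa_{4k+1}$ both source and target are Eilenberg--MacLane spaces of type $K(\bZ_p,4k+1)$ (since $K_{4k+1}(\bZ;\bZ_p)\cong\bZ_p\cong K_{4k+1}(\bZ_p;\bZ_p)$ in this range) and the map is multiplication by $L_p(1+2k,\omega^{-2k})\in\bZ_p$ up to a unit: this is precisely case (i) of Lemma \ref{lem:Transg}. For $\Omega^\infty\kappa_{4k-2}$ and $\Omega^\infty\kappa_{4k}$ the target is $K(\bZ_p,n+1)$ for the appropriate $n$, since the relevant $K_*(\bZ_p;\bZ_p)$ is $\bZ_p$ in this range. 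The map is classified by an element of $[H B,\Sigma H\bZ_p]\cong \Ext^1_{\bZ_p}(B,\bZ_p)$ with $B$ the finitely-generated $\bZ_p$-module $K_{4k-2}(\bZ;\bZ_p)$ or $K_{4k}(\bZ;\bZ_p)$; any such class is realised by a short exact sequence $0\to\bZ_p\to A\to B\to 0$ whose induced cofibre map $HB\to\Sigma H\bZ_p$ is the given one. This is exactly the data of case (ii) of Lemma \ref{lem:Transg}.

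Combining the three cases and closure under products yields transgressivity of $\Omega^\infty\tau_{[2,2p-3]}\kappa$ in degrees $*<2p-2$, as required. There is no real obstacle beyond bookkeeping: the main point to check carefully is that the off-diagonal components of $\tau_{[2,2p-3]}\kappa$ vanish in the chosen splittings (so that the map really is a coproduct of the three listed Bockstein/multiplication maps), which follows directly from the vanishing lemma of Section \ref{sec:IrregPrimesSS}.
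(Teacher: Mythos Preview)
Your proof is correct and follows essentially the same approach as the paper: decompose $\tau_{[2,2p-3]}\kappa$ as a coproduct of the maps $\kappa_{4k-2}$, $\kappa_{4k}$, $\kappa_{4k+1}$ using the splitting and vanishing lemma of Section~\ref{sec:IrregPrimesSS}, then apply Lemma~\ref{lem:Transg} to each factor and invoke closure under products. Your version is more explicit than the paper's (spelling out which case of Lemma~\ref{lem:Transg} applies to each factor and why the off-diagonal components vanish), but the argument is the same.
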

\begin{proof}
By the discussion in Section \ref{sec:CompletionMap} it suffices to show that each of the fibrations
\begin{align*}
 K(K_{4k-2}(\bZ;\bZ_p), 4k-2) &\xrightarrow{\Omega^\infty \kappa_{4k-2} } K(K_{4k-1}(\bZ_p;\bZ_p), 4k-1)\\
 K(K_{4k}(\bZ;\bZ_p), 4k) \times K(K_{4k+1}(\bZ;\bZ_p), 4k+1) &\xrightarrow{\Omega^\infty (\kappa_{4k} + \kappa_{4k+1})} K(K_{4k+1}(\bZ_p;\bZ_p), 4k+1)
\end{align*}
are transgressive. Using that $K_{4k- 1}(\bZ_p;\bZ_p)$ is $\bZ_p$ or $\bZ/p \oplus \bZ_p$, Lemma \ref{lem:Transg} (ii) applies to the first; using that $K_{4k+ 1}(\bZ_p;\bZ_p) \cong \bZ_p$, Lemma \ref{lem:Transg} (iii) applies to the second.
\end{proof}

Using Lemma \ref{lem:HorizIso} and the discussion surrounding it, this translates into the following statement.

\begin{corollary}\label{cor:TransSS}
For all large enough $n$, in degrees $* < 2p-2$ the group $\SL_n(\bZ_p)$ acts trivially on $\widetilde{H}^*(\SL_n(\bZ))$, and the latter is freely generated as a graded-commutative $\bF_p$-algebra by elements $\{x_\alpha\}_{\alpha \in I}$ which are transgressive in the spectral sequence
\begin{equation*}
E_2^{s,t}  = H^s_\mathrm{cts}(\SL_n(\bZ_p) ; \widetilde{H}^t(\SL_n(\bZ) )) \Longrightarrow H^{s+t}(\SL_n(\bZ)),
\end{equation*}
given by \eqref{eq:CongruenceSS} with $k=0$.\qed
\end{corollary}

\subsubsection{The spectral sequence at regular odd primes} When $p$ is a regular odd prime, the discussion in Section \ref{sec:CompletionRegular} gives an explicit description of this spectral sequence. Limiting our interest to the slightly smaller range of degrees $* < 2p-3$ (as we will later limit ourselves to $* < p-1$ anyway) and using that $\tau_{[1,2p-4]}j = 0$ by the description of the homotopy groups of $j$ at the beginning of Section \ref{sec:CompletionMap}, we see that in this range of degrees the right-hand column of in the large diagram in Section \ref{sec:RelKThy} may be identified with the fibre sequence
\begin{equation*}
\Omega^{\infty} \Sigma^2 ko \lra \Omega^\infty \Sigma^5 ko \lra \Omega^\infty \Sigma^3 ku.
\end{equation*}
The cohomology of these spaces is well-known, and can be extracted from \cite{DL} for example. To do so one should express these spaces in terms of their usual names from real and complex Bott periodicity: the fibre is $SO/U$, the base is the universal cover of $U$ (i.e. $SU$), and the total space is the universal cover of $U/Sp$. In degrees $* < 2p-3$ we obtain isomorphisms
\begin{align*}
\bF_p[x_2, x_6, x_{10}, \ldots] &\cong H^*(\mathrm{hofib}(\Omega^\infty\kappa)) \cong_{\text{\eqref{eq:FibKappaIsCompletedCoh}}} \widetilde{H}^*(\SL(\bZ))\\
\Lambda^*_{\bF_p}[ y_5, y_9, y_{13}, \ldots] &\cong H^*(\Omega^\infty \SK(\bZ))\\
\Lambda^*_{\bF_p}[ y_3, y_5, y_{7}, \ldots] &\cong H^*(\Omega^\infty \SK(\bZ_p)).
\end{align*}
The first row of isomorphisms calculates the completed cohomology $\widetilde{H}^*(\SL(\bZ))$ in degrees $* < 2p-3$, and so justifies the deduction of Corollary \ref{cor:mainReg} from Theorem \ref{thm:mainCong}. The remaining isomorphisms show, following \cite{DL}, that in this range the Serre spectral sequence for the right-hand column in the large diagram in Section \ref{sec:RelKThy} has the form
$$\Lambda^*_{\bF_p}[ y_3, y_5, y_7, \ldots] \otimes \bF_p[x_2, x_6, x_{10}, \ldots] \Longrightarrow \Lambda^*_{\bF_p}[ y_5, y_9, y_{13}, \ldots]$$
with generating differentials $d_{4k+3}(x_{4k+2}) = y_{4k+3}$, for an appropriate choice of generators $x_i$ and $y_i$. Using the large diagram in Section \ref{sec:RelKThy}, this describes the spectral sequence \eqref{eq:CongruenceSS} for $k=0$, in degrees $* < 2p-3$ and for all large enough $n$.

\subsection{Evaluating completed cohomology at irregular primes}\label{sec:LFunction}

To justify the examples from the introduction, we need to calculate $H^*(\mathrm{hofib}(\Omega^\infty \kappa))$, and in degrees $* \leq 2p-3$ this can be formally obtained from $\pi_*(\mathrm{hofib}(\kappa))$. The recent work of Blumberg--Mandell \cite{BlumbergMandell} expresses these homotopy groups in terms of {\'e}tale cohomology of $\bZ[\tfrac{1}{p}]$, but we shall proceed in a more down-to-earth way.

We are only interested in degrees $* < p-1$, and in this range the long exact sequence on homotopy groups for the map $\kappa$ has the form
$$
\begin{tikzcd}
0\rar &  \pi_{4k+1}(\mathrm{hofib}(\kappa)) \rar & \bZ_p \rar{\kappa_{4k+1}}
             \ar[draw=none]{d}[name=T, anchor=center]{}
    & \bZ_p \ar[rounded corners,
            to path={ -- ([xshift=2ex]\tikztostart.east)
                      |- (T.center) \tikztonodes
                      -| ([xshift=-2ex]\tikztotarget.west)
                      -- (\tikztotarget)}]{dll}[at end]{} \\   
&  \pi_{4k}(\mathrm{hofib}(\kappa)) \rar & K_{4k}(\bZ;\bZ_p) \rar
             \ar[draw=none]{d}[name=Z, anchor=center]{}
    & 0 \ar[rounded corners,
            to path={ -- ([xshift=2ex]\tikztostart.east)
                      |- (Z.center) \tikztonodes
                      -| ([xshift=-2ex]\tikztotarget.west)
                      -- (\tikztotarget)}]{dll}[at end]{} \\   
&  \pi_{4k-1}(\mathrm{hofib}(\kappa)) \rar & 0 \rar
             \ar[draw=none]{d}[name=X, anchor=center]{}
    & \bZ_p \ar[rounded corners,
            to path={ -- ([xshift=2ex]\tikztostart.east)
                      |- (X.center) \tikztonodes
                      -| ([xshift=-2ex]\tikztotarget.west)
                      -- (\tikztotarget)}]{dll}[at end]{} \\      
&  \pi_{4k-2}(\mathrm{hofib}(\kappa)) \rar & K_{4k-2}(\bZ;\bZ_p) \rar  & 0,
\end{tikzcd}
$$
so if $p$ satisfies the Vandiver conjecture (e.g.\ if $p \leq 2^{31}$) so that $K_{4k}(\bZ;\bZ_p)=0$ and $K_{4k-2}(\bZ;\bZ_p) \cong \bZ_p / \mathrm{Num}(B_{2k}/4k)$,  then $\pi_*(\mathrm{hofib}(\kappa))$ is determined by the maps $\kappa_{4k+1} : \bZ_p \cong K_{4k+1}(\bZ;\bZ_p) \to K_{4k+1}(\bZ_p;\bZ_p) \cong \bZ_p$ and extensions $0 \to \bZ_p \to \pi_{4k-2}(\mathrm{hofib}(\kappa)) \to \bZ_p / \mathrm{Num}(B_{2k}/4k) \to 0$. We do not know how to control these extensions, though \cite{BlumbergMandell} gives the expression $\pi_{4k-2}(\mathrm{hofib}(\kappa)) \cong H^1_{\et}(\bZ[\tfrac{1}{p}] ; \bZ/p^\infty(1-2k))^*$ for it which may be useful to some readers. Instead we shall restrict ourselves to work in the range of degrees where $\mathrm{Num}(B_{2k}/4k)$ are $p$-adic units, i.e.\ where $K_*(\bZ;\bZ_p)$ is torsion-free, in which case we just need to understand the maps $\kappa_{4k+1}$. The map $\kappa_{4k+1}$ is given by multiplication by the value $L_p(1+2k, \omega^{-2k})$ of the $p$-adic $L$-function, up to a $p$-adic unit; see \cite{LarsL} for a concise discussion. 

We may use this as follows. The $\bF_p$-cohomology of the fibre of 
$$\Omega^\infty\kappa_{4k+1} : K(K_{4k+1}(\bZ;\bZ_p), 4k+1) \lra K(K_{4k+1}(\bZ_p;\bZ_p), 4k+1)$$
in degrees $* < 4k+1 + 2p-2$ depends only on whether $L_p(1+2k, \omega^{-2k})$ is a $p$-adic unit: if so then the fibre has trivial $\bF_p$-cohomology, and if not then the fibre has $\bF_p$-cohomology $\bF_p[y_{4k}] \otimes \Lambda^*_{\bF_p}[y_{4k+1}]$ in this range.

By \cite[Proposition 11.3.12 (1)]{Cohen} we have
$$2k \cdot L_p(1+2k, \omega^{-2k}) = \lim_{r \to \infty} B_{\phi(p^r)-2k},$$
where the latter denote Bernoulli numbers. Assuming that $4k+1 < p$, so certainly $2k < p-1$, then as $\phi(p^r)-2k \equiv p-1-2k \mod p-1$ we have the Kummer congruences $\frac{B_{\phi(p^r)-2k}}{\phi(p^r)-2k} \equiv \frac{B_{p-1-2k}}{p-1-2k} \mod p$ and so
\begin{equation*}
L_p(1+2k, \omega^{-2k}) \dot{\equiv} B_{p-1-2k} \mod p,
\end{equation*}
where $\dot{\equiv}$ denotes congruence up to a $p$-adic unit. Thus we may determine whether or not $L_p(1+2k, \omega^{-2k})$ is a $p$-adic unit by calculating this residue class. 

Let us now treat the three primes 37, 16843, and 2124679 from the introduction. Certainly they all satisfy the Vandiver conjecture. For $p=37$ we find that $B_{p-1-2k} \not \equiv 0 \mod p$ for all $1+2k < p$ except for $1+2k=5$. This Bernoulli number contributes to torsion in $K_{62}(\bZ ; \bZ_p)$, so in degrees $* < 36$ there is no torsion in $K_*(\bZ; \bZ_p)$. It follows that
$$H^*(\mathrm{hofib}(\Omega^\infty \kappa);\bF_p) \cong \bF_p[x_2, x_6, x_{10}, \ldots] \otimes \bF_p[y_8] \otimes \Lambda^*_{\bF_p}[y_9]$$
in degrees $* < 36$. With Theorem \ref{thm:mainCong} this justifies the first example in the introduction.

For $p=16843$ we find that $B_{p-1-2k} \not \equiv 0 \mod p$ for all $1+2k < p$ except for $1+2k=3$. This Bernoulli number contributes to torsion in $K_{33678}(\bZ ; \bZ_p)$, so in degrees $* < 16842$ there is no torsion in $K_*(\bZ; \bZ_p)$. This gives the second example in the introduction.

For $p=2124679$ we find that $B_{p-1-2k} \not \equiv 0 \mod p$ for all $1+2k < p$ except for $1+2k=3$ and $1+2k = 1422781$. The first Bernoulli number contributes to torsion in $K_{4249350}(\bZ;\bZ_p)$, and the second contributes to torsion in $K_{1403794}(\bZ;\bZ_p)$, so in degrees $* < 1403794$ there is no torsion in $K_*(\bZ; \bZ_p)$. This gives the third example in the introduction.

\subsection{Cohomology of $p$-adic analytic groups}\label{sec:AnalyticGps}

For $m \geq 1$ and $p$ odd the groups $\SL_n(\bZ_p, p^m)$ are uniform ($=$ uniformly powerful) pro-$p$-groups \cite[Theorem 5.2]{DDMS} and hence their continuous $\bF_p$-cohomology may be described as the exterior algebra on their first $\bF_p$-cohomology (cf. \cite[V.2.2.7.2]{Lazard}\footnote{This formulation needs to be unwrapped quite a bit. The first step is to realise that ``\emph{équi-p-valués}'' is ``uniformly powerful''.}, \cite[Theorem 5.1.5]{SymondsWeigel}). On the other hand the map
$$I + p^m A \mapsto A \mod p : \SL_n(\bZ_p, p^m) \lra sl_n(\bF_p)$$
induces an isomorphism $\SL_n(\bZ_p, p^m)/\SL_n(\bZ_p, p^{m+1}) \overset{\sim}\to sl_n(\bF_p)$ and this is the maximal $p$-elementary abelian quotient for $n \geq 2$ by \cite[Lemma 5.1]{DDMS}. Thus there is an identification $H^1(\SL_n(\bZ_p, p^m);\bF_p) \cong sl_n(\bF_p)^\vee$, from which we obtain:

\begin{proposition}\label{prop:CtsCoh}
The induced map
$$\Lambda_{\bF_p}^*[sl_n(\bF_p)^\vee] \lra H^*_\mathrm{cts}(\SL_n(\bZ_p, p^m);\bF_p)$$
is an isomorphism.\qed
\end{proposition}

\section{Proof of Theorem \ref{thm:mainCong}}

The map of fibrations of pro-spaces
\begin{equation*}
\begin{tikzcd}
\{B\SL_n(\bZ, p^{r+m})\}_r \dar \rar & \{B\SL_n(\bZ, p^r)\}_r \dar \\
B\SL_n(\bZ, p^m) \dar \rar & B\SL_n(\bZ) \dar\\
\{B\SL_n(\bZ/ p^{r+m}, p^m)\}_r \rar & \{B\SL_n(\bZ/ p^r)\}_r 
\end{tikzcd}
\end{equation*}
gives a map of spectral sequences
\begin{equation*}
\begin{tikzcd}
{^I}E_2^{s,t} \arrow[r, equals] &  H^s_\mathrm{cts}(\SL_n(\bZ_p) ; \widetilde{H}^t(\SL_n(\bZ))) \dar \arrow[Rightarrow, r] & H^{s+t}(\SL_n(\bZ)) \dar \\
{^{I\!I}}E_2^{s,t} \arrow[r, equals] &  H^s_\mathrm{cts}(\SL_n(\bZ_p, p^m) ; \widetilde{H}^t(\SL_n(\bZ) )) \arrow[Rightarrow, r] & H^{s+t}(\SL_n(\bZ, p^m) ).
\end{tikzcd}
\end{equation*}
Corollary \ref{cor:TransSS} describes the structure of the first spectral sequence. Namely, if $n$ is large enough then in degrees $* < 2p-2$ the groups $\widetilde{H}^*(\SL_n(\bZ))$ are stable and operated upon trivially by $\SL_n(\bZ_p)$ by the theorem of Calegari and Emerton \cite{CEStability}, and furthermore are freely generated as a graded-commutative $\bF_p$-algebra by classes $\{x_\alpha\}_{\alpha \in I}$ which are transgressive, i.e.\ which survive until ${^I}E_{|x_\alpha|+1}^{0, |x_\alpha|}$ and then satisfy ${^I}d_{|x_\alpha|+1}(x_\alpha) = y_\alpha$ for certain $y_\alpha \in H^{|x_\alpha|+1}_\mathrm{cts}(\SL_n(\bZ_p))$. The map of spectral sequences means that each $x_\alpha$ also transgresses in the second spectral sequence, and furthermore transgress to the image of the corresponding $y_{\alpha}$ under the restriction map
$H^*_\mathrm{cts}(\SL_n(\bZ_p) ) \to H^*_\mathrm{cts}(\SL_n(\bZ_p, p^m) )$. We will prove that this map is zero in degrees $0<*< p$, so that the second spectral sequence collapses in a range:

\begin{theorem}\label{thm:ResIsZero}
The restriction maps
$$H^*_\mathrm{cts}(\SL_n(\bZ_p) ) \lra H^*_\mathrm{cts}(\SL_n(\bZ_p, p^m) )$$
are zero in degrees $0 < * < p$, for all $m>0$, and all large enough $n$.
\end{theorem}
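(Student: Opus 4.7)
My plan is to split the argument into two steps: a reduction of the general case to $m=1$ using multiplicativity and the $\log/\exp$ formalism for uniform pro-$p$ groups, and then the hard case $m=1$ via analysis of the Hochschild--Serre spectral sequence together with Part~I's stable computations.

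For the reduction, I would argue as follows. For each $i \geq 1$ the inclusion $\SL_n(\bZ_p, p^{i+1}) \hookrightarrow \SL_n(\bZ_p, p^i)$ is between uniform pro-$p$ groups, whose continuous $\bF_p$-cohomology is, by Section~\ref{sec:AnalyticGps}, the exterior algebra on $H^1_{cts} \cong sl_n(\bF_p)^\vee$. Under the $\log/\exp$ equivalence of uniform pro-$p$ groups, this inclusion is identified with multiplication by $p$ on $p^i sl_n(\bZ_p)$, and hence induces the zero map on the maximal elementary abelian quotient. Dually the restriction is zero on $H^1_{cts}$, and multiplicativity extends this to $H^{>0}_{cts}$. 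Factoring the restriction $H^*_{cts}(\SL_n(\bZ_p)) \to H^*_{cts}(\SL_n(\bZ_p, p^m))$ as a composite of restrictions along this chain of inclusions, I find that for $m \geq 2$ it vanishes in all positive degrees (not just in the range $* < p$).

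For the case $m = 1$ I would consider the Hochschild--Serre spectral sequence
$$E_2^{s, t} = H^s(\SL_n(\bF_p); \Lambda^t sl_n(\bF_p)^\vee) \Longrightarrow H^{s+t}_{cts}(\SL_n(\bZ_p); \bF_p).$$
The restriction map is the edge homomorphism landing in $E_\infty^{0, *} \subseteq [\Lambda^* sl_n^\vee]^{\SL_n(\bF_p)}$, so it suffices to show $E_\infty^{0, t} = 0$ for $0 < t < p$. The transgression $d_2$ in this spectral sequence is controlled by the extension class of $1 \to \SL_n(\bZ_p, p) \to \SL_n(\bZ_p) \to \SL_n(\bF_p) \to 1$, which by Remark~\ref{rem:Conventionx} coincides up to a unit with the generator $x \in \Ext^2_\GL(I, I)$ from Part~I. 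Theorem~\ref{thm:mainFin} together with the walled Brauer categorical framework of Section~\ref{sec:uwBr} then describe both the $E_2$-entries and the ``cup with $x$'' action combinatorially in the stable range.

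The hard part will be verifying that these differentials actually kill every invariant in $E_2^{0, t}$ for $0 < t < p$. In the walled Brauer picture this amounts to showing that every $\SL(V)$-invariant of $\Lambda^t sl(V)^\vee$ lies in the image of ``cup with $x$'' coming from $[\Lambda^{t-1} sl^\vee \otimes sl]^{\SL}$. The bound $t < p$ is what brings all the partitions appearing in the Schur functor decomposition of $\Lambda^t sl(V)^\vee$ within the regime where Theorem~\ref{thm:mainFin} applies, reducing the question to a tractable enumerative check on Brauer-type diagrams; I would expect the combinatorial structure of these diagrams, together with the graphical calculus of Section~\ref{sec:uwBr}, to make this surjectivity transparent once one has identified the invariants explicitly.
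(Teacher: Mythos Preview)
Your overall strategy matches the paper's: reduce to $m=1$ via the cohomology of uniform pro-$p$ groups, then analyze the edge homomorphism of the Hochschild--Serre spectral sequence for $\SL_n(\bZ_p, p) \to \SL_n(\bZ_p) \to \SL_n(\bF_p)$ using the extension class $x$. The reduction step is essentially identical to the paper's argument.

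However, there is a genuine confusion in your $m=1$ sketch. You correctly note that one needs $E_\infty^{0,t} = 0$ for $0 < t < p$, but then translate this into the condition that ``every $\SL(V)$-invariant of $\Lambda^t sl(V)^\vee$ lies in the image of `cup with $x$','' and speak of making ``this surjectivity transparent.'' This is backwards: the differential $d_2 : E_2^{0,t} \to E_2^{2,t-1}$ goes \emph{out of} the column $s=0$, so what must be shown is that $d_2$ is \emph{injective} on $E_2^{0,t}$ (no nonzero invariant survives to $E_3$), not that invariants lie in some image. Surjectivity of any map into $E_2^{0,t}$ has no bearing on whether classes there die.

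The paper carries this out by an explicit computation (Corollary~\ref{cor:CalcGps} and Proposition~\ref{prop:SSbehaviour}): one identifies $E_2^{0,*} \cong \Lambda^*_{\bF_p}[c_3, c_5, c_7, \ldots]$ and $E_2^{2,*} \cong \bF_p\{e_2, e_4, e_6, \ldots\} \otimes \Lambda^*_{\bF_p}[c_3, c_5, \ldots]$ in degrees $* < p$, and then shows, via the Leibniz rule for $d_2$ in the derived category (Lemma~\ref{lem:d2}) together with the graphical calculus of Section~\ref{sec:wBr}, that $d_2(c_t) = t \cdot e_{t-1}$. Injectivity in degrees $< p$ is then an elementary check on this derivation. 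Your appeal to Theorem~\ref{thm:mainFin} is also slightly misdirected: what is actually needed is the computation of $H^0$ and $H^2$ of $\SL(V)$ with coefficients in $\Lambda^t sl(V)^\vee$, which comes directly from Theorem~\ref{thm:StabCohTensorPowers} and Lemma~\ref{lem:CohAdRep} rather than from Theorem~\ref{thm:mainFin}.
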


\begin{corollary}\label{cor:SScollapse}
The spectral sequence
$${^{I\!I}}E_2^{s,t} =  H^s_\mathrm{cts}(\SL_n(\bZ_p, p^m) ; \widetilde{H}^t(\SL_n(\bZ))) \Longrightarrow H^{s+t}(\SL_n(\bZ, p^m) )$$
collapses in total degrees $* < p-1$.
\end{corollary}
\begin{proof}
By the discussion at the beginning of this Section, in total degree $* < 2p-2$ we have 
$${^{I\!I}}E_2^{*,t} = H^t_\mathrm{cts}(\SL_n(\bZ_p, p^m)) \otimes S_{\bF_p}[x_\alpha \, | \, \alpha \in I]$$
with $x_\alpha \in {^{I\!I}}E_2^{0, |x_\alpha|}$, and $S_{\bF_p}[-]$ denoting the free graded-commutative $\bF_p$-algebra. The map of spectral sequences from ${^{I}}E_2^{s,t}$ shows that $x_\alpha$ is transgressive, and transgresses to a class in the image of $H^{|x_\alpha|+1}_\mathrm{cts}(\SL_n(\bZ_p) ) \to H^{|x_\alpha|+1}_\mathrm{cts}(\SL_n(\bZ_p, p^m) )$. By Theorem \ref{thm:ResIsZero} this map is zero if $|x_\alpha|+1 < p$, so $x_\alpha$ is a permanent cycle if $|x_\alpha| < p-1$. By the Leibniz rule, the spectral sequence then collapses in this range.
\end{proof}

\subsection{Proof of Theorem \ref{thm:ResIsZero} for $m>1$}\label{sec:ResIsZeroMGeq1}

The proof in this case is essentially trivial. There is a factorisation
$$H^*_\mathrm{cts}(\SL_n(\bZ_p) ) \lra H^*_\mathrm{cts}(\SL_n(\bZ_p, p) ) \lra H^*_\mathrm{cts}(\SL_n(\bZ_p, p^m) )$$
of the restriction map and by the discussion in Section \ref{sec:AnalyticGps} we know the latter two cohomology rings: both are given by $\Lambda_{\bF_p}^*[sl_n(\bF_p)^\vee]$. However, the composition
$$\SL_n(\bZ_p, p^m) \lra \SL_n(\bZ_p, p) \lra \SL_n(\bZ_p, p)/\SL_n(\bZ_p, p^2) = sl_n(\bF_p)$$
is trivial for $m>1$, so the second map in the factorisation of the restriction map is trivial in degrees $*>0$, and so the restriction map is too.

\subsection{Proof of Theorem \ref{thm:ResIsZero} for $m=1$}\label{sec:ResIsZeroMEq1}

The fibration of pro-spaces
\begin{equation}\label{eq:EdgeSS}
\{B\SL_n(\bZ/ p^{r+1}, p)\}_r \lra  \{B\SL_n(\bZ/ p^r)\}_r \lra B\SL_n(\bZ/p)
\end{equation}
yields a spectral sequence
$$^{I\!I\!I}E^{s,t}_2 = H^s(\SL_n(\bZ/p) ; {H}^t_\mathrm{cts}(\SL_n(\bZ_p, p) )) \Longrightarrow H^{s+t}_\mathrm{cts}(\SL_n(\bZ_p) ),$$
and the map of Theorem \ref{thm:ResIsZero} factors as
$$H^*_\mathrm{cts}(\SL_n(\bZ_p) ) \overset{\text{edge hom.}}\lra H^0(\SL_n(\bZ/p) ; {H}^*_\mathrm{cts}(\SL_n(\bZ_p, p) )) \overset{\text{inc.}}\lra H^*_\mathrm{cts}(\SL_n(\bZ_p, p)),$$
so we must show that this edge homomorphism is trivial for $0 < * < p$. The following proposition describes the $d_2$-differential on this edge.

\begin{proposition}\label{prop:SSbehaviour}
In degrees $* < p$ and for all large enough $n$ the differential
$$d_2 : {^{I\!I\!I}E}^{0,*}_2 \lra {^{I\!I\!I}E}^{2,*-1}_2 $$
is given by
\begin{align}
\label{eq:derivation}
\begin{split}
\Lambda^*_{\bF_p}[c_3, c_5, c_7, \ldots] &\lra \bF_p\{e_1, e_2, e_3, \ldots\} \otimes \Lambda^*_{\bF_p}[c_3, c_5, c_7, \ldots]\\
c_t &\longmapsto t \cdot e_{t-1} \otimes 1
\end{split}
\end{align}
and the Leibniz rule, where $c_t$ and $e_t$ have degree $t$.
\end{proposition}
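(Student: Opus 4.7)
The plan has two phases: first identify the source and target of $d_2$ using the stable invariant-theoretic methods of Part 1, and then use that $d_2$ is a derivation of the fiber cohomology ring $H^*_{cts}(\SL_n(\bZ_p,p)) = \Lambda^*_{\bF_p}[sl_n(\bF_p)^\vee]$ whose value on degree $1$ generators is controlled by the extension class from Remark~\ref{rem:Conventionx}.

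For the source, Section~\ref{sec:AnalyticGps} gives $H^*_{cts}(\SL_n(\bZ_p, p)) \cong \Lambda^*_{\bF_p}[sl_n(\bF_p)^\vee]$, whose stable $\SL_n(\bF_p)$-invariants in degrees $*<p$ form the exterior algebra on the ``alternating trace'' classes $c_t$ for odd $t \geq 3$ (this is the exterior analogue of Lemma~\ref{lem:invariants}; even-degree alternating traces vanish because the trace is cyclically invariant while cyclic permutations of even order are odd). For the target, Corollary~\ref{cor:IdOnBrauerCat} together with the identification $X^2 \cong sl_n(\bF_p)$ (the invariant $c_1 \in \mathrm{Sym}^1(I \otimes I^\vee)$ being the coevaluation) yields in the stable range
$$H^2(\SL_n; \Lambda^{t-1}_{\bF_p}[sl_n(\bF_p)^\vee]) \cong [\Lambda^{t-1}_{\bF_p}[sl_n(\bF_p)^\vee] \otimes sl_n(\bF_p)]^{\SL_n(\bF_p)},$$
and the partitioned-trace description of such invariants (as in the proof of Lemma~\ref{lem:invariants} and Section~\ref{sec:ProofGeneralCase}) identifies this with $\bF_p\{e_{t-1}\} \otimes \Lambda^*_{\bF_p}[c_3, c_5, \ldots]$ summed over odd $t \geq 3$: here $e_{t-1}$ is the single-cycle invariant containing the distinguished $sl_n$ slot, while the exterior factor accounts for adjoining disjoint cycles made entirely of $sl_n^\vee$ factors.

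For the differential itself, in the Hochschild--Serre spectral sequence of a $p$-torsion abelian extension $1 \to A \to G \to Q \to 1$ the $d_2$-differential acts as a graded derivation on $\Lambda^*_{\bF_p}[A^\vee]$, determined on generators $A^\vee$ by the extension class $\epsilon \in H^2(Q; A)$, interpreted $Q$-equivariantly via $A \otimes A^\vee = \End(A)$. In our case the extension factors through $1 \to sl_n(\bF_p) \to \SL_n(\bZ/p^2) \to \SL_n(\bF_p) \to 1$, which is the Witt extension of Remark~\ref{rem:Conventionx}; after the normalization introduced there its Euler class is the universal generator $x = x^{[1]} \in \Ext^2_{\GL}(I, I)$, so $d_2$ on the generators $sl_n(\bF_p)^\vee$ is (the $\SL_n(\bF_p)$-equivariant counterpart of) $x$.

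Finally, applying the (graded) Leibniz rule to $c_t = \mathrm{tr}(X^{\wedge t})$ (where $X \in sl_n^\vee \otimes sl_n$ is the canonical element) breaks $d_2(c_t)$ into $t$ summands obtained by replacing each $X$ in turn by $d_2(X) = x$; the cyclic symmetry of the trace makes all $t$ summands equal to the single-cycle invariant $e_{t-1}$, giving $d_2(c_t) = t \cdot e_{t-1}$, and the Leibniz rule extends this to all of $\Lambda^*_{\bF_p}[c_3, c_5, c_7, \ldots]$. The main technical obstacle is the identification of the extension class with the universal generator $x$, which is precisely what Remark~\ref{rem:Conventionx} was set up to accomplish; the integer factor $t$ then appears cleanly from the cyclic symmetry of the trace, and the partitioned-trace description of the target from the first phase ensures no further contributions can arise.
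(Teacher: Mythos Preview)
Your overall strategy is the same as the paper's, and the identification of source and target (via the alternating-trace classes $c_t$ and the marked-cycle classes $e_{t-1}$) is correct, though the paper obtains these by the direct computation of Corollary~\ref{cor:CalcGps} rather than by invoking Corollary~\ref{cor:IdOnBrauerCat}. The identification of $d_2^1$ with the Euler class of the Witt extension, normalised via Remark~\ref{rem:Conventionx}, is also exactly right.

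There is, however, a genuine gap in the Leibniz-rule step. You invoke the standard fact that for a \emph{$p$-torsion abelian} extension $1 \to A \to G \to Q \to 1$ the differential $d_2$ is the derivation determined by the extension class, and then say that ``the extension factors through'' the abelian extension $1 \to sl_n(\bF_p) \to \SL_n(\bZ/p^2) \to \SL_n(\bF_p) \to 1$. But the fibre $\SL_n(\bZ_p,p)$ of the spectral sequence ${^{III}E}$ is \emph{not} abelian; there is only a map of extensions to the abelian one, and on fibres that map is not a cohomology isomorphism (the cohomology of the elementary abelian group $sl_n(\bF_p)$ has polynomial generators in addition to the exterior ones). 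So comparing the two spectral sequences only lets you identify $d_2^1$, not the higher $d_2^t$. Nor can you reduce $d_2(c_t)$ to $d_2^1$ by the ordinary multiplicativity of $d_2$ on the $E_2$-page: the class $c_t \in E_2^{0,t}$ is a $Q$-invariant in $\Lambda^t[sl_n^\vee]$, but the individual degree-$1$ factors lie in $H^1(K) = sl_n^\vee$, which has \emph{no} nonzero $Q$-invariants in the stable range (indeed $E_2^{0,1}=0$), so $c_t$ is not a product of elements on the $E_2$-page and the Leibniz rule for $d_2 : E_2 \to E_2$ gives nothing.

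What the paper does to bridge this gap is to establish a \emph{derived} Leibniz rule (Lemma~\ref{lem:d2}): the differentials $d_2^t$ are realised as canonical elements of $\Ext^2_{\bF_p[Q]}(H^t(K), H^{t-1}(K))$, and the Leibniz identity holds for these as morphisms in the derived category of $\bF_p[Q]$-modules. This lets one compute $d_2^t \circ c_t$ by first lifting $c_t$ along the quotient $(sl_n^\vee)^{\otimes t} \to \Lambda^t[sl_n^\vee]$ (and further to $(V \otimes V^\vee)^{\otimes t}$), applying the derived Leibniz rule $t$ times to reduce to $d_2^1$ acting on one tensor factor at a time, and only \emph{then} passing to $Q$-invariants. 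The paper carries out the resulting composition explicitly using the walled Brauer formalism of Section~\ref{sec:wBr}; your cyclic-symmetry heuristic is a correct summary of the endgame, but the derived Leibniz rule is the missing ingredient that makes it legitimate.
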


We defer the (quite involved) proof of this proposition to Section \ref{sec:PfPropSSbehaviour}. 

To finish the proof of Theorem \ref{thm:ResIsZero} in the case $m=1$, we claim that the map \eqref{eq:derivation} is injective in degrees $0 < * < p$. To see this, suppose that $x \in \Lambda^*_{\bF_p}[c_3, c_5, c_7, \ldots]$ has $d_2(x)=0$ and $0<|x|<p$. Then, for each $i \in \{3,5,7,\ldots\}$ with $i<p$ write $x = A_i + c_i \cdot B_i$ with $A_i$ and $B_i$ not containing $c_i$, so that
\begin{align*}
0 = d_2(x) &= d_2(A_i+c_i \cdot B_i)\\
&= d_2(A_i) + i \cdot e_{i-1} \otimes B_i - c_i \cdot d_2(B_i).
\end{align*}
As $A_i$ and $B_i$ do not contain $c_i$, $d_2(A_i)$ and $d_2(B_i)$ do not contain terms of the form $e_{i-1} \otimes ?$ and so cannot cancel with the middle term: thus $i \cdot B_i = 0$, and as $i < p$ it follows that $B_i=0$. Thus $c_i$ does not occur in $x$, but this goes for all $i$, so $x=0$. Now, by Proposition \ref{prop:SSbehaviour} this means that the differential $d_2 : {^{I\!I\!I}E}^{0,*}_2 \to {^{I\!I\!I}E}^{2,*-1}_2$
is injective for $0 < * < p$, and hence that the edge homomorphism
$$H^{*}_\mathrm{cts}(\SL_n(\bZ_p) ) \lra H^0(\SL(\bZ/p) ; {H}^*_\mathrm{cts}(\SL_n(\bZ_p, p) ))$$
is trivial in degrees $0 < * < p$, proving Theorem \ref{thm:ResIsZero}.

\subsection{Resolving extensions}\label{sec:Ext}

By Corollary \ref{cor:SScollapse} the lowest bidegree in which there could be a differential in the second spectral sequence is $d_p : {^{I\!I}}E_p^{0,p-1} \to {^{I\!I}}E_p^{p,0}$. It follows that the associated graded of the Hoschchild--Leray--Serre filtration on $H^{*}(\SL_n(\bZ, p^m))$ satisfies
\begin{equation}\label{eq:AssocGr}
\mathrm{Gr}^\bullet H^{*}(\SL_n(\bZ, p^m) ) \cong \Lambda^\bullet_{\bF_p}[sl_n(\bF_p)^\vee] \otimes \widetilde{H}^{*-\bullet}(\SL_n(\bZ) )
\end{equation}
in degrees $* < p-1$, as bigraded $\bF_p$-algebras and as $\SL(\bZ/p^m)$-representations. To prove Theorem \ref{thm:mainCong} we must show that $H^{*}(\SL_n(\bZ, p^m)) \cong \bigoplus_s \mathrm{Gr}^s H^{*}(\SL_n(\bZ, p^m))$, as graded $\bF_p$-algebras and $\SL_n(\bZ/p^m)$-representations, in degrees $* < p-1$. Using the multiplicative structure, to do so it suffices to show that the quotient map
$$H^{*}(\SL_n(\bZ, p^m)) \lra \mathrm{Gr}^0 H^{*}(\SL_n(\bZ, p^m)) = \widetilde{H}^{*}(\SL_n(\bZ))$$
is split as a map of $\bF_p$-algebras and $\SL_n(\bZ/p^m)$-representations, in degrees $* < p-1$.

As $\widetilde{H}^{*}(\SL_n(\bZ))$ is a free graded-commutative $\bF_p$-algebra on certain classes $\{x_\alpha\}_{\alpha \in I}$, by part of Corollary \ref{cor:TransSS}, it suffices to show that for each of the free generators $x_\alpha \in \widetilde{H}^*(\SL_n(\bZ) )$ of degree $|x_\alpha| < p-1$ there exists an $\SL_n(\bZ/p^m)$-invariant element $\bar{x}_{\alpha} \in H^{*}(\SL_n(\bZ, p^m) )$ which restricts to 
$$x_{\alpha} \in \frac{H^{|x_\alpha|}(\SL_n(\bZ, p^m))}{F^{|x_\alpha|-1} H^{|x_\alpha|}(\SL_n(\bZ, p^m))} =  H^0_\mathrm{cts}(\SL_n(\bZ_p, p^m) ; \widetilde{H}^{|x_\alpha|}(\SL_n(\bZ) )).$$
By pulling back along the natural map $\SL_n(\bZ_p, p^m) \to \SL_n(\bZ_p, p)$, it suffices to produce such $\bar{x}_{\alpha}$'s in the case $m=1$

To do so, consider the filtration of the $\bF_p[\SL_n(\bZ/p)]$-module $H^{|x_\alpha|}(\SL_n(\bZ, p) )$, whose associated graded is
$$\mathrm{Gr}^t H^{|x_\alpha|}(\SL_n(\bZ, p) ) \cong \Lambda^t_{\bF_p}[sl_n(\bF_p)^\vee] \otimes \widetilde{H}^{|x_\alpha|-t}(\SL_n(\bZ) ),$$
where the second factor has trivial $\SL_n(\bZ/p)$-action. The spectral sequence of this filtered module takes the form
$$^{I\!V}E^{s,t}_1 = H^s(\SL_n(\bZ/p) ; \mathrm{Gr}^t H^{|x_\alpha|}(\SL_n(\bZ, p) )) \Longrightarrow H^{s}(\SL_n(\bZ/p);H^{|x_\alpha|}(\SL_n(\bZ, p) ))$$
$$d_r : {^{I\!V}E^{s,t}_r} \lra {^{I\!V}E^{s+1,t+r}_r}.$$

\begin{lemma}
As long as $|x_\alpha| < p-1$ we have $^{I\!V}E^{1,*}_1=0$, for all large enough $n$.
\end{lemma}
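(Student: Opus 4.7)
\medskip

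\noindent\textit{Proof plan.} The plan is to reduce to showing that $H^1(\SL_n(\bZ/p); \Lambda^t[sl_n(\bF_p)^\vee]) = 0$ for every $0 \leq t \leq |x_\alpha|$. Because $\widetilde{H}^{|x_\alpha|-t}(\SL_n(\bZ))$ carries the trivial $\SL_n(\bZ/p)$-action and is a finite-dimensional $\bF_p$-vector space, it factors out of the $E_1$-term as a free tensor factor. The $t=0$ case is then Quillen's theorem $H^1(\SL_n(\bF_p);\bF_p)=0$ for all large enough $n$ \cite{QuillenFiniteFields}, so only the range $1 \leq t \leq |x_\alpha| < p$ remains to be handled.

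For this range, I would first choose $n$ large enough to lie in the stable range of Section \ref{sec:Stability}, and additionally coprime to $p$ (which is permitted since we only need ``all large enough $n$''). With $p \nmid \dim(V) = n$, the coevaluation $coev: \bF_p \to V \otimes V^\vee$ is split by $\tfrac{1}{\dim V}\cdot ev$ as a map of $\GL_n(\bF_p)$-representations, giving an equivariant decomposition $V \otimes V^\vee \cong \bF_p \oplus sl_n(\bF_p)^\vee$ (using the identification $V_{[1,1]} \cong sl_n(\bF_p)^\vee$ from Section \ref{sec:stab}). Since $t! \in \bF_p^\times$ (using $t < p$), the antisymmetrization projector exhibits $\Lambda^t(-)$ as a summand of $(-)^{\otimes t}$, and therefore
$$\Lambda^t[sl_n(\bF_p)^\vee] \text{ is a direct summand of } (V \otimes V^\vee)^{\otimes t} = V^{\otimes t} \otimes (V^\vee)^{\otimes t}$$
as $\SL_n(\bF_p)$-representations.

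Taking $H^1(\SL_n(\bF_p); -)$ is additive on summands, so it suffices to prove $H^1(\SL_n(\bF_p); V^{\otimes t} \otimes (V^\vee)^{\otimes t}) = 0$ in the stable range. Since the $\bF_p^\times$-action on tensor powers is trivial (as $cv \otimes c^{-1}w^\vee = v \otimes w^\vee$), the argument of the Corollary in Section 2.3 applies verbatim to identify this group with $H^1(\GL_n(\bF_p); V^{\otimes t} \otimes (V^\vee)^{\otimes t}) \cong \Ext^1_{\GL}(I^{\otimes t}, I^{\otimes t})$ in a stable range. By Theorem \ref{thm:StabCohTensorPowers} the latter is the degree-$1$ part of $\Gamma_{\bF_p}[x]^{\otimes t} \otimes \bF_p\{\mathrm{Bij}(t,t)\}$, which vanishes because $|x^{[i]}|=2i$ forces this bigraded object to be concentrated in even cohomological degrees. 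The only real (and minor) obstacle is verifying that the splitting of $V\otimes V^\vee$ survives passage to exterior powers in characteristic $p$ — which is exactly where the hypothesis $t\le|x_\alpha|<p$ enters, guaranteeing that $t!$ is a unit so that $\Lambda^t$ is a direct summand of $(-)^{\otimes t}$. Everything else is a direct appeal to results already proved in the paper.
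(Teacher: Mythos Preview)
Your proof is correct and follows essentially the same route as the paper's: reduce to $H^1(\SL_n(\bF_p);\Lambda^t[sl_n(\bF_p)^\vee])=0$ for $t<p$, arrange $p\nmid n$ so that $sl_n(\bF_p)^\vee$ splits off $V\otimes V^\vee$, then realise $\Lambda^t[sl_n(\bF_p)^\vee]$ as a summand of $(V\otimes V^\vee)^{\otimes t}$ and invoke Theorem~\ref{thm:StabCohTensorPowers}. The only phrasing to tighten is your parenthetical justifying $p\nmid n$: the real reason this is permitted is that homological stability (Section~\ref{sec:Stability}) makes these cohomology groups independent of $n$ in the stable range, so computing at a single convenient large $n$ suffices---which is exactly how the paper states it.
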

\begin{proof}
As an $\SL_n(\bZ/p)$-representation $\mathrm{Gr}^t H^{|x_\alpha|}(\SL(\bZ, p) )$ is a direct sum of copies of $\Lambda^t_{\bF_p}[sl_n(\bF_p)^\vee]$, so we must show that these representations have trivial first cohomology for $t < p-1$. As in Section \ref{sec:Semisimplicity}, by homological stability we may suppose that $n \not\equiv 0 \mod p$, so that with $V = \bF_p^n$ the standard representation the sequence $0 \to \bF_p \overset{coev}\to V \otimes V^\vee \to sl_n(\bF_p)^\vee \to 0$ has a splitting given by $\tfrac{1}{n} ev : V \otimes V^\vee \to \bF_p$, and hence $sl_n(\bF_p)^\vee \oplus \bF_p \cong V \otimes V^\vee$. Then $\Lambda^t_{\bF_p}[sl_n(\bF_p)^\vee]$ is a summand of $\Lambda^t_{\bF_p}[V \otimes V^\vee]$ so it suffices to show that the latter representation has trivial first cohomology. On the other hand, as $t < p$ we have that $\Lambda_{\bF_p}^t[V \otimes V^\vee]$ is a summand of $(V \otimes V^\vee)^{\otimes t}$, so it suffices to show that  the latter representation has trivial first cohomology. This follows from Theorem \ref{thm:StabCohTensorPowers}, bearing in mind that the class $x$ in the domain of the map \eqref{eq:FF} has degree 2.
\end{proof}

It follows that for $|x_\alpha| < p-1$ the class $x_{\alpha} \in H^0(\SL_n(\bZ/p) ; \mathrm{Gr}^{0} H^{|x_\alpha|}(\SL_n(\bZ, p) )) = {^{I\!V}E^{0,0}_1}$ is a permanent cycle in this spectral sequence, so that we may find a $\bar{x}_{\alpha} \in H^{0}(\SL_n(\bZ/p);H^{|x_\alpha|}(\SL_n(\bZ, p) ))$ restricting to $x_\alpha$, as required.

\begin{remark}[Addenda]
The above argument proves something a little stronger than just Theorem \ref{thm:mainCong}. 

Firstly the argument of Section \ref{sec:ResIsZeroMGeq1} only used that $0 < *$, giving \eqref{eq:AssocGr} in degrees $* < 2p-2$ for $m>1$ and all large enough $n$. The right-hand side of \eqref{eq:AssocGr} is a free graded-commutative algebra in degrees $* < 2p-2$, so the multiplicative extensions are trivial. However, triviality of the extensions as $\SL_n(\bZ/p^m)$-representations were obtained by comparison with the case $m=1$, so we only know them for $* < p-1$.

Secondly, in the case $m=1$ Corollary \ref{cor:SScollapse} gives the identity \eqref{eq:AssocGr} not only in degrees $* < p-1$ but also for $*=p-1$ and $\bullet > 0$. Furthermore it gives an exact sequence
$$0 \to \mathrm{Gr}^0 H^{p-1}(\SL_n(\bZ, p)) \to {^{I\!I}}E_p^{0,p-1} = \widetilde{H}^{p-1}(\SL_n(\bZ) ) \overset{d_p}\to {^{I\!I}}E_p^{p,0} = \Lambda^p_{\bF_p}[sl_n(\bF_p)^\vee].$$
Combined with the same line of reasoning as above, this can be used to also obtain an injection
$$H^{p-1}(\SL_n(\bZ, p)) \lra \bigoplus_{a+b=p-1}\Lambda^{a}_{\bF_p}[sl_n(\bF_p)^\vee] \otimes \widetilde{H}^{b}(\SL_n(\bZ) )$$
of $\SL_n(\bZ/p)$-representations, compatible with the evident multiplicative structures in degrees $* \leq  p-1$.
\end{remark}

\section{Proof of Proposition \ref{prop:SSbehaviour}}\label{sec:PfPropSSbehaviour}

\subsection{Tensor powers of the dual adjoint representation}
Recall that we write $sl(V)$ for the kernel of $ev : V \otimes V^\vee \to \bF_p$, so there is a short exact sequence
\begin{equation}\label{eq:Res}
0 \lra \bF_p \overset{coev}\lra V \otimes V^\vee \lra sl(V)^\vee \lra 0.
\end{equation}

In Sections \ref{sec:FunctorHomology} and \ref{sec:wBr} we have constructed maps 
$$\psi_{S,T} : \Gamma_{\bF_p}[x]^{\otimes S} \otimes k\{\mathrm{Bij}(T,S)\} \overset{\Psi_{S,T}}\lra \Ext_{\GL}^*(I^{\otimes T}, I^{\otimes S}) \lra \Ext_{\GL(V)}^*(V^{\otimes T}, V^{\otimes S})$$
and shown that $\Psi_{S,T}$ is an isomorphism (in Theorem \ref{thm:StabCohTensorPowers}) and that the second map is an isomorphism in a stable range of degrees (by the stability discussion in Section \ref{sec:stab}). Furthermore, the latter may be written as $H^*(\GL(V) ; V^{\otimes S} \otimes (V^\vee)^{\otimes T})$, and as discussed in Section \ref{sec:SLvsGL} we may replace $\GL(V)$ by $\SL(V)$ in a stable range.

Using this we may form the composition
\begin{equation}\label{eq:Contract}
\begin{aligned}
\Gamma_{\bF_p}[x]^{\otimes T} \otimes \bF_p\{\Sigma^{ad}_T\} &\overset{\psi_{T,T}}\lra H^*(\SL(V) ; (V \otimes V^\vee)^{\otimes T})\\
&\lra H^*(\SL(V) ; (sl(V)^\vee)^{\otimes T})
\end{aligned}
\end{equation}
for which we have the following analogue of Theorem \ref{thm:StabCohTensorPowers}.

\begin{lemma}\label{lem:CohAdRep}
In a stable range of degrees, the kernel of \eqref{eq:Contract} is spanned by those $\big( \bigotimes_{t \in T} x_t^{[\ell(t)]} \big) \otimes \sigma$ such that there is a $j \in T$ with $\ell(j)=0$ and $\sigma(j)=j$.
\end{lemma}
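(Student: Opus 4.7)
The plan is to $\GL(V)$-equivariantly split the short exact sequence \eqref{eq:Res} in the stable range, use this to describe the cohomological kernel as the image of $coev$-insertions, and then translate these insertions to the basis side via the upward walled Brauer functoriality of $\psi_{T,T}$.

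By the results of Section \ref{sec:Stability} the composition stabilises with $\dim(V)$ and agrees with the analogous composition for $\GL(V)$ in a stable range, so we may pass to $\GL(V)$ and assume $n := \dim(V)$ is as large as we wish; in particular we may choose $n \not\equiv 0 \pmod{p}$. Under this assumption $\tfrac{1}{n} ev : V \otimes V^\vee \to \bF_p$ is a $\GL(V)$-equivariant retraction of $coev$, splitting \eqref{eq:Res} as $V \otimes V^\vee \cong sl(V)^\vee \oplus \bF_p$. Taking tensor powers over $T$ yields a $\GL(V)$-equivariant decomposition
$$
(V \otimes V^\vee)^{\otimes T} \;\cong\; \bigoplus_{J \subseteq T} (sl(V)^\vee)^{\otimes T \setminus J}
$$
under which the quotient $(V \otimes V^\vee)^{\otimes T} \to (sl(V)^\vee)^{\otimes T}$ is projection onto the $J = \emptyset$ summand. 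Passing to cohomology, the composition of the lemma is therefore a split surjection whose kernel is the image of the sum over $t \in T$ of the pushforwards
$$
(coev_t)_* : H^*(\GL(V) ; (V \otimes V^\vee)^{\otimes T \setminus t}) \lra H^*(\GL(V) ; (V \otimes V^\vee)^{\otimes T}),
$$
where $coev_t$ inserts $coev : \bF_p \to V \otimes V^\vee$ at the $t$-th tensor factor.

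It remains to identify this image in the $\psi_{T,T}$-basis. The map $coev_t$ is exactly the morphism $(T \setminus t, T \setminus t) \to (T, T)$ in $\mathsf{uwBr}$ consisting of the standard inclusions together with the bijection $\{t\} \to \{t\}$; this uses that under $\Ext^0_\GL(I,I) \cong H^0(\GL; V \otimes V^\vee)$ the identity corresponds to $coev$, i.e.\ $x^{[0]} = coev$. Since $\psi$ is natural on $\mathsf{uwBr}$ (Section \ref{sec:uwBr}), by the explicit formula given there this morphism carries $\prod_{u \in T \setminus t} x_u^{[\ell(u)]} \otimes \phi$ to $\prod_{u \in T} x_u^{[\ell'(u)]} \otimes \phi'$ with $\ell'|_{T \setminus t} = \ell$ and $\ell'(t) = 0$, and with $\phi'|_{T \setminus t} = \phi$ and $\phi'(t) = t$. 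Hence the image of $(coev_t)_*$ is exactly the span of basis elements $\prod_u x_u^{[\ell(u)]} \otimes \sigma$ with $\ell(t) = 0$ and $\sigma(t) = t$, and summing over $t \in T$ gives the claimed description.

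The main subtlety to get right is the ``fixed-point'' condition $\sigma(j) = j$: it arises because inserting $coev$ at the $t$-th $V \otimes V^\vee$-factor pairs $V_t$ with $V^\vee_t$ and so forces the new bijection to fix $t$. The weaker condition ``some $\ell(j)=0$'' alone is not enough, as can be seen directly from the fact that the transposition $(1\,2) \in \Sigma_{\{1,2\}}^{ad}$ maps to a nonzero invariant in $H^0(\GL(V) ; (sl(V)^\vee)^{\otimes 2})$ (computed as the image of the 1-dimensional $\GL(V)$-invariant summand of $(sl(V)^\vee)^{\otimes 2}$).
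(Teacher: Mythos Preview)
Your proof is correct and takes a genuinely different route from the paper's. The paper tensors together $T$ copies of the two-term resolution $0 \to \bF_p \overset{coev}\to V \otimes V^\vee \to sl(V)^\vee \to 0$ and runs the resulting hyperhomology spectral sequence
\[
E_1^{s,t} = \bigoplus_{\substack{S \subset T \\ |S|=s}} H^t(\SL(V) ; (V \otimes V^\vee)^{\otimes T \setminus S}) \Longrightarrow H^{t+s}(\SL(V) ; (sl(V)^\vee)^{\otimes T}),
\]
identifying the $d_1$ with the coevaluation-insertion maps and observing (via Theorem~\ref{thm:StabCohTensorPowers}) that the $E_2$-page collapses onto the line $s=0$. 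You instead exploit stabilisation in $\dim(V)$ to reduce to $n \not\equiv 0 \pmod p$, where $\tfrac{1}{n}ev$ splits the sequence $\GL(V)$-equivariantly; the spectral sequence then degenerates for the trivial reason that the resolution itself splits, and you read off the kernel directly from the resulting direct-sum decomposition. Both arguments finish the same way, translating the coevaluation insertions through the $\mathsf{uwBr}$-naturality of $\psi$. Your approach is more elementary---it sidesteps the spectral sequence entirely---but it leans on the stabilisation of $H^*(\SL(V);(sl(V)^\vee)^{\otimes T})$ and of the map in question to justify computing at a single convenient $n$; this is legitimate (dualise to $(sl(V))^{\otimes T} \hookrightarrow (V \otimes V^\vee)^{\otimes T}$, which is a map of finitely-generated $\mathsf{VIC}(k)$-modules, and use the stability results of Section~\ref{sec:Stability}), though it is a step that deserves to be made explicit. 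The paper's spectral-sequence argument has the mild advantage of working uniformly for all large $V$ without this reduction.
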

\begin{proof}
By the results of Section \ref{sec:Stability}, we may increase the dimension of $V$ if we wish, and in particular we may suppose that $\dim(V) \not\equiv 0 \mod p$, so that \eqref{eq:Res} is $\GL(V)$-equivariantly split by a unit times the map $ev : V \otimes V^\vee \to \bF_p$. Consider $\bF_p \overset{coev}\to V \otimes V^\vee$ as a chain complex whose homology is $sl(V)^\vee$ supported in degree 0. The $T$-th tensor power of this chain complex has the form
$$\cdots \lra \bigoplus_{j \in T} (V \otimes V^\vee)^{\otimes T \setminus j} \lra (V \otimes V^\vee)^{\otimes T},$$
where the rightmost map is given by inserting $coev$ on each of the $T$ summands. By the K{\"u}nneth theorem the homology of this complex is $(sl(V)^\vee)^{\otimes T}$ supported in degree zero, giving an exact sequence
$$\bigoplus_{j \in T} (V \otimes V^\vee)^{\otimes T \setminus j} \lra (V \otimes V^\vee)^{\otimes T} \lra (sl(V)^\vee)^{\otimes T} \lra 0.$$
We have arranged that the middle map is $\GL(V)$-equivariantly split, so this sequence remains exact after applying $H^*(\SL(V) ; -)$. In a stable range of degrees the map $\psi_{T,T}$ is an isomorphism by Theorem \ref{thm:StabCohTensorPowers} and the discussion after it, as are the maps $\psi_{T \setminus j, T \setminus j}$. Thus after applying $H^*(\SL(V) ; -)$ we obtain an exact sequence
$$\bigoplus_{j \in T} \Gamma_{\bF_p}[x]^{\otimes T\setminus j} \otimes \bF_p\{\Sigma^{ad}_{T \setminus j}\} \to \Gamma_{\bF_p}[x]^{\otimes T} \otimes \bF_p\{\Sigma^{ad}_T\} \overset{\text{\eqref{eq:Contract}}}\to H^*(\SL(V) ; (sl(V)^\vee)^{\otimes T}) \to 0$$
where the left-hand map is described by the functoriality on the upward walled Brauer category, as in Section \ref{sec:uwBr}. Namely, on the $j$th summand it is induced by the map $(inc, inc, \mathrm{Id}_{\{j\}}) : (T \setminus j, T \setminus j) \to (T,T)$ in the upward walled Brauer category, which we defined to send $(\bigotimes_{s \in T \setminus j} x_s^{[\ell'(s)]}) \otimes \sigma'$ to $(\bigotimes_{t \in T} x_t^{[\ell(t)]}) \otimes \sigma$, where $\sigma \in \Sigma^{ad}_T$ fixes $j \in T$  and is given by $\sigma'$ on $T \setminus j$, and $\ell$ sends $j$ to 0 and agrees with $\ell'$ on $T \setminus j$. The image of this map is spanned by the claimed elements. 
\end{proof}

\begin{corollary}\label{cor:CalcGps}
There are classes
\begin{align*}
c_t \in H^0(\SL(V) ; \Lambda^t[sl(V)^\vee]) & \text{ for $t \geq 3$ odd}\\
e_t \in H^2(\SL(V) ; \Lambda^t[sl(V)^\vee]) & \text{ for $t \geq 1$}
\end{align*}
and isomorphisms
\begin{align*}
H^0(\SL(V) ; \Lambda^*[sl(V)^\vee]) &\cong \Lambda^*_{\bF_p}[c_3, c_5, c_7, \ldots]\\
H^1(\SL(V) ; \Lambda^*[sl(V)^\vee]) &\cong 0\\
H^2(\SL(V) ; \Lambda^*[sl(V)^\vee]) &\cong \bF_p\{e_1, e_2, e_3, \ldots\} \otimes \Lambda^*_{\bF_p}[c_3, c_5, c_7, \ldots]
\end{align*}
in degrees $* < p$ for $\dim(V)$ sufficiently large.
\end{corollary}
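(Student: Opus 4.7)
The plan is to deduce Corollary \ref{cor:CalcGps} from Lemma \ref{lem:CohAdRep} by extracting the $\mathrm{sgn}$-isotypic component of $H^i(\SL(V); (sl(V)^\vee)^{\otimes T})$ under the diagonal $\Sigma_T$-action. Since $T<p$, the group algebra $\bF_p[\Sigma_T]$ is semisimple and $\Lambda^T[sl(V)^\vee]$ is canonically the $\mathrm{sgn}$-summand of $(sl(V)^\vee)^{\otimes T}$, giving
$$H^i(\SL(V); \Lambda^T[sl(V)^\vee]) \cong \Hom_{\Sigma_T}\bigl(\mathrm{sgn},\, H^i(\SL(V); (sl(V)^\vee)^{\otimes T})\bigr).$$
The vanishing $H^1=0$ is then immediate: Lemma \ref{lem:CohAdRep} realises the latter, in a stable range, as a quotient of $\Gamma_{\bF_p}[x]^{\otimes T} \otimes \bF_p\{\Sigma_T^{ad}\}$, and every basis monomial $\prod_t x_t^{[\ell(t)]}\otimes\sigma$ sits in the even cohomological degree $2\sum_t\ell(t)$.

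For $H^0$ and $H^2$ I would proceed by the orbit--stabiliser method applied to the description in Lemma \ref{lem:CohAdRep}. The diagonal action is $\tau\cdot(\ell,\sigma) = (\ell\circ\tau^{-1},\,\tau\sigma\tau^{-1})$, so orbits are classified by the cycle type of $\sigma$ together with the placement of the labels; by Frobenius reciprocity an orbit contributes a one-dimensional summand to the $\mathrm{sgn}$-isotypic part precisely when the sign character restricts trivially to its stabiliser. That stabiliser factors as a product of within-cycle rotations (whose generators are $k$-cycles of sign $(-1)^{k-1}$) and swaps between cycles of the same labelled type (each such swap being a product of $k$ transpositions, hence of sign $(-1)^k$).

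Specialising to $H^0$, where all labels vanish so $\sigma$ is a derangement with cycle lengths $\geq 2$, the two sign constraints force each cycle length to be odd and forbid any repetition, which recovers $\Lambda^*_{\bF_p}[c_3,c_5,c_7,\ldots]$ with $c_{2k+1}$ the class of a single unlabelled $(2k+1)$-cycle. For $H^2$ exactly one vertex carries the label $1$, singling out a decorated cycle alongside unlabelled cycles subject to the $H^0$ analysis; the decoration breaks the rotational symmetry of its cycle and thereby simplifies the corresponding stabiliser contribution. The main obstacle lies in completing this $H^2$ bookkeeping: identifying which decorated-cycle configurations survive the sign-restriction, verifying that the result assembles multiplicatively with the $c_{2j+1}$ into the claimed free module over $\Lambda^*_{\bF_p}[c_3,c_5,\ldots]$, and isolating explicit representatives for the generators $e_{2i}$ that will feed into the computation of the differential $d_2(c_t)=t\cdot e_{t-1}$ in Proposition \ref{prop:SSbehaviour}.
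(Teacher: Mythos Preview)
Your approach is essentially the paper's own: the paper first computes $H^i(\SL(V);\Lambda^*[V\otimes V^\vee])$ via the same sign-coinvariant analysis and then passes to $sl(V)^\vee$, whereas you go straight to $(sl(V)^\vee)^{\otimes T}$ via Lemma~\ref{lem:CohAdRep} and extract the $\mathrm{sgn}$-isotypic component. Your $H^0$ and $H^1$ arguments are complete and correct.

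For $H^2$, completing your outline is routine. Since the decoration kills the rotational symmetry of its own cycle, there is \emph{no} parity constraint on the decorated cycle's length $k\geq 1$; the undecorated cycles must (exactly as in your $H^0$ analysis) have distinct odd lengths $\geq 3$. This yields
\[
H^2(\SL(V);\Lambda^*[sl(V)^\vee]) \;\cong\; \bF_p\{e_k : k \geq 1\} \otimes \Lambda^*_{\bF_p}[c_3,c_5,\ldots],
\]
with $e_k$ the class of a single decorated $k$-cycle. This actually disagrees with the Corollary as printed, which lists only the even-indexed $e$'s: the paper's assertion that ``the marked cycle must be of even length'' is an error (for instance the decorated $3$-cycle gives a nonzero $e_3$, so $H^2(\SL(V);\Lambda^3[sl(V)^\vee])$ is one-dimensional rather than zero; one can also check this against the Poincar\'e series computed in the paper's appendix). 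The slip is harmless downstream: Proposition~\ref{prop:SSbehaviour} and the injectivity argument following it only require the even-indexed $e_{t-1}$ (for odd $t$) and their linear independence over $\Lambda^*[c_3,c_5,\ldots]$, which the corrected description certainly provides.
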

\begin{proof}
Recall that we write $\ul{t} := \{1,2,\ldots, t\}$. Let $\tilde{c}_t \in H^0(\SL(V) ; \Lambda^t_{\bF_p}[V \otimes V^\vee])$ denote the image of the class 
$$1 \otimes (1,2,3,\ldots, t) \in \Gamma_{\bF_p}[x]^{\otimes \ul{t}} \otimes \bF_p\{\Sigma^{ad}_{\ul{t}}\}$$
under the maps
\begin{equation}\label{eq:push}
\Gamma_{\bF_p}[x]^{\otimes \ul{t}} \otimes \bF_p\{\Sigma^{ad}_{\ul{t}}\} \to H^*(\SL(V) ; (V \otimes V^\vee)^{\otimes \ul{t}}) \to H^*(\SL(V) ; \Lambda^t_{\bF_p}[V \otimes V^\vee])
\end{equation}
and $c_t$ its further image in $H^*(\SL(V) ; \Lambda^t_{\bF_p}[sl(V)^\vee])$. The discussion above shows that $c_1=0$. Writing $\bF_p^-$ for the sign representation, for $t<p$ we have
$$H^0(\SL(V) ; \Lambda^t_{\bF_p}[V \otimes V^\vee]) \cong \bF_p\{\Sigma^{ad}_{\ul{t}}\} \otimes_{\Sigma_{\ul{t}}} \bF_p^-.$$
Recalling that we have assumed that $p$ is odd, this is given by the way conjugacy classes split in the alternating group: if a conjugacy class contains an even cycle or two odd cycles of the same length, then it becomes trivial on applying $- \otimes_{\Sigma_{\ul{t}}} \bF_p^-$; otherwise it contributes a 1-dimensional space. This identifies this space with the degree $t$ part of $\Lambda^*_{\bF_p}[\tilde{c}_1, \tilde{c}_3, \tilde{c}_5, \ldots]$, and hence identifies $H^0(\SL(V) ; \Lambda^*_{\bF_p}[V \otimes V^\vee])$ with this graded-commutative algebra.

As $\Lambda^t_{\bF_p}[sl(V)^\vee]$ is a summand of $(sl(V)^\vee)^{\otimes \ul{t}}$ for $t < p$, using Lemma \ref{lem:CohAdRep} we obtain the claimed formula for $H^0(\SL(V) ; \Lambda^*_{\bF_p}[sl(V)^\vee])$, and also for $H^1(\SL(V) ; \Lambda^*_{\bF_p}[sl(V)^\vee])$ because $H^*(\SL(V) ; (V \otimes V^\vee)^{\otimes \ul{t}})$ is supported in even degrees in the stable range by Theorem \ref{thm:StabCohTensorPowers}.

Let $\tilde{e}_t \in H^2(\SL(V) ; \Lambda^t_{\bF_p}[V \otimes V^\vee])$ denote the image of the class 
$$x_1^{[1]} \otimes (1,2,3,\ldots, t) \in \Gamma_{\bF_p}[x]^{\otimes \ul{t}} \otimes \bF_p\{\Sigma^{ad}_{\ul{t}}\}$$
under the maps \eqref{eq:push} and $e_t$ its further image in $H^*(\SL(V) ; \Lambda^t_{\bF_p}[sl(V)^\vee])$.  We have
$$H^2(\SL(V) ; \Lambda^t_{\bF_p}[V \otimes V^\vee]) \cong \left(\bF_p\{x_i^{[1]} \, | \, i \in \ul{t}\} \otimes \bF_p\{\Sigma^{ad}_{\ul{t}}\}\right) \otimes_{\Sigma_{\ul{t}}} \bF_p^-.$$
We think of the first factor as being the space of permutations of $\ul{t}$ written as disjoint cycles, with one entry marked, on which $\Sigma_{\ul{t}}$ acts by conjugation. As above we find that to contribute the unmarked cycles must all be of different odd lengths, and the marked cycle may be of any length. As a module over $H^0(\SL(V) ; \Lambda^*_{\bF_p}[V \otimes V^\vee])$ we therefore have that $H^2(\SL(V) ; \Lambda^*_{\bF_p}[V \otimes V^\vee])$ is free on the basis of marked cycles, i.e.\ $\tilde{e}_1, \tilde{e}_2, \tilde{e}_3, \ldots$. Using as above that $\Lambda^t_{\bF_p}[sl(V)^\vee]$ is a summand of $(sl(V)^\vee)^{\otimes \ul{t}}$ for $t < p$, we find the same description for $H^2(\SL(V) ; \Lambda^*_{\bF_p}[sl(V)^\vee])$ as a module over $H^0(\SL(V) ; \Lambda^*_{\bF_p}[sl(V)^\vee])$.
\end{proof}

\subsection{Multiplicative structure on the $E_2$-page of the Serre spectral sequence}\label{sec:SerreSS}

We will need to use some details of the $d_2$-differentials in the spectral sequence for a group extension
\begin{equation}\label{eq:GpExt}
1 \lra K \lra G \lra Q \lra 1,
\end{equation}
with coefficients in a field $\bk$: it has the form
$$E_2^{s,t} = H^s(Q ; H^t(K ; \bk)) = \Ext^s_{\bk[Q]}(\bk, H^t(K;\bk)) \Longrightarrow H^{s+t}(G;\bk).$$

\begin{lemma}\label{lem:d2}
There are canonical elements $d_2^t \in \Ext_{\bk[Q]}^2(H^t(K;\bk), H^{t-1}(K;\bk))$  for $t \geq 1$ such that:
\begin{enumerate}[(i)]
\item The differential $d_2 : E_2^{s, t} \to E_2^{s+2, t-1}$ is given by the Yoneda product with $d_2^t$.

\item The element $-d_2^1 \in \Ext_{\bk[Q]}^2(H^1(K;\bk), \bk) \cong H^2(Q ; H_1(K;\bk))$ classifies the extension obtained from \eqref{eq:GpExt} by pushout along $K \overset{\text{ab}}\to H_1(K;\bZ) \to H_1(K;\bk)$.

\item The square
\begin{equation*}
\begin{tikzcd}[column sep = 5em]
H^{t'}(K;\bk) \otimes H^{t''}(K;\bk) \rar{- \smile-} \arrow[d, "d_2^{t'} \otimes \mathrm{Id} + (-1)^{t'} \mathrm{Id} \otimes d_2^{t''}"] & H^{t'+ t''}(K;\bk) \arrow[d, "d_2^{t'+t''}"] \\
\parbox{4.5cm}{$H^{t'-1}(K;\bk) \otimes H^{t''}(K;\bk)[2] \oplus H^{t'}(K;\bk) \otimes H^{t''-1}(K;\bk)[2]$} \rar{(- \smile -)[2]}& H^{t'+ t''-1}(K;\bk)[2]
\end{tikzcd}
\end{equation*}
commutes in the derived category of $\bk[Q]$-modules.
\end{enumerate}
\end{lemma}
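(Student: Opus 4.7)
The plan is to interpret the LHS spectral sequence as the Grothendieck spectral sequence for the composition $(-)^G = ((-)^K)^Q$, or equivalently as the hypercohomology spectral sequence obtained by applying $R\Gamma_Q$ to the object $X := R\Gamma_K\,\bk \in D(\bk[Q])$. Its cohomology is $H^t(X) = H^t(K;\bk)$ as $\bk[Q]$-modules, and $X$ carries a natural commutative algebra structure in $D(\bk[Q])$ induced by the cup product on $K$-cochains. The spectral sequence of the statement is then the one associated to the Postnikov filtration of $X$, with $E_2^{s,t} = \Ext^s_{\bk[Q]}(\bk, H^t(X))$, and the product on $E_2$ is induced by this algebra structure.

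For part (i), I would construct $d_2^t$ as (a projection of) the $k$-invariant of the Postnikov tower of $X$. Explicitly, the truncation triangle
$$\tau_{\leq t-1} X \lra \tau_{\leq t} X \lra H^t(K;\bk)[-t] \lra \tau_{\leq t-1} X [1]$$
provides a connecting morphism $H^t(K;\bk)[-t] \to \tau_{\leq t-1} X [1]$, and composing with the projection $\tau_{\leq t-1} X [1] \to H^{t-1}(K;\bk)[-t+2]$ to the top Postnikov stage yields the desired class $d_2^t \in \Ext^2_{\bk[Q]}(H^t(K;\bk), H^{t-1}(K;\bk))$. That Yoneda product with $d_2^t$ realises the spectral-sequence differential is the standard identification of the $d_2$-differential in a Postnikov spectral sequence with its first $k$-invariant; it can be verified by tracing a class $\alpha \in \Hom_{D(\bk[Q])}(\bk, H^t(X)[s])$ through the long exact sequences associated to these Postnikov triangles.

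For part (ii), I would use that the multiplication $\mu : X \otimes X \to X$ is compatible with the Postnikov filtration, inducing morphisms $\tau_{\leq t'} X \otimes \tau_{\leq t''} X \to \tau_{\leq t'+t''} X$ lifting $\mu$. Naturality of the $k$-invariant construction with respect to this refined multiplication produces a commutative square in $D(\bk[Q])$ which, after shifting and projecting to the relevant Postnikov stages, is precisely the Leibniz square of the statement; the Koszul sign $(-1)^{t'}$ arises from commuting the shift $[1]$ implicit in $d_2^t$ past the tensor factor of total degree $t'$. The main technical obstacle is verifying the $k$-invariant description of $d_2$ and its compatibility with $\mu$ with correct signs, but both are standard facts about Postnikov towers of algebra objects in triangulated categories and require no new ideas.
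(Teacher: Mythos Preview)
Your proposal is correct and follows essentially the same strategy as the paper: realise the spectral sequence as the Postnikov spectral sequence of an algebra object, identify $d_2^t$ with the first $k$-invariant, and deduce the Leibniz rule from compatibility of the multiplication with Postnikov truncations. The paper in fact remarks that ``it is surely possible to prove this working completely algebraically'' and then opts instead to pass to parameterised stable homotopy theory over $B = BQ$, working with the $H_B\bk$-algebra $C = F_B(E, H_B\bk)$ of fibrewise cochains in $\mathsf{Sp}_{/B}$ rather than with $R\Gamma_K\,\bk \in D(\bk[Q])$; the heart of the $t$-structure used there is exactly the category of local systems on $BQ$, i.e.\ $\bk[Q]$-modules, so the two settings match up.

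The practical difference is that the paper's $\infty$-categorical setting makes the multiplicative compatibility with Postnikov truncations genuinely automatic and functorial, whereas in the bare triangulated category $D(\bk[Q])$ the step ``$\mu$ induces morphisms $\tau_{\leq t'} X \otimes \tau_{\leq t''} X \to \tau_{\leq t'+t''} X$ lifting $\mu$'' and the subsequent sign bookkeeping require either an enhancement or a choice of explicit model (e.g.\ a filtered DGA of $K$-cochains with $Q$-action). This is not a gap in your argument---it is, as you say, standard---but it is the one place where some care beyond a hand-wave is needed, and is presumably why the paper prefers the parameterised-spectra formulation. In exchange, your approach has the advantage of staying entirely within the algebraic world where the lemma is actually applied.
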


We were surprised to not be able to find this result in the literature, although the analogue of (i) and (ii) in homology has been developed in various works of Legrand, see e.g.\ \cite{Legrand}, and is discussed for the change-of-rings spectral sequence by Suárez-Alvarez \cite[Theorem 2.2.3]{MSA}.

\begin{proof}[Proof of Lemma \ref{lem:d2}]
Let $E_*G \to \bk$ be the standard free resolution of the trivial $\bk[G]$-module, which in homological degree $p$ is $\bk[G^p]$, and consider the chain complex
$$C := \mathrm{Hom}_{\bk[K]}(E_*G, \bk) = [\mathrm{Hom}_\bk(E_*G, \bk)]^K,$$
which has a residual action of $G/K=Q$. The homology of this complex in degree $-i$ is $H^i(K;\bk)$, with the $Q$-action induced by the outer $Q$-action on $K$ coming from the extension \eqref{eq:GpExt}.

Deconcatenation gives a map of $\bk[G]$-modules 
\begin{align*}
\Delta : E_*G &\lra E_*G \otimes_\bk E_*G\\
(g_1, \ldots, g_p) &\longmapsto \sum_{i=0}^{p} (g_1, \ldots, g_i) \otimes (g_{i+1}, \ldots, g_p)
\end{align*}
which yields a morphism $\phi \otimes \psi \mapsto (\phi \otimes \psi) \circ \Delta : C \otimes_\bk C \to C$ of chain complexes over $\bk[Q]$. Precomposition with $E_*G \to \bk$ gives a morphism $\bk \to C$ of chain complexes over $\bk[Q]$. On homology this gives the cup-product on $H^*(K;\bk)$, and its unit.

Using this structure, we consider $C$ to be a unital associative ring object in the derived category $\mathsf{D}(\bk[Q])$ of $\bk[Q]$, which we equip with the natural $t$-structure and corresponding truncation functors denoted $\tau$, as well as the symmetric monoidal structure given by $- \otimes_\bk -$ (with the diagonal $Q$-action). The monoidal product of connective objects is connective, from which it follows that the monoidal product of an $a$-connective and a $b$-connective object is $(a+b)$-connective: thus the multiplication and unit on $C$ gives morphisms $\tau_{\geq -i} C \otimes_\bk \tau_{\geq -j} C \to \tau_{\geq -i-j}C$ and $\bk \to \tau_{\geq 0} C$.

The distinguished triangles for adjacent connective covers of $C$ yield its Whitehead tower, having the form
\begin{equation*}
\begin{tikzcd}[row sep=small]
 & 0 \dar\\
H^1(K;\bk)[-2]  \rar & \tau_{\geq 0} C \dar \rar{\sim} & H^0(K;\bk)[0]\simeq \bk\\
H^2(K;\bk)[-3] \rar & \tau_{\geq -1} C \dar \rar & H^1(K;\bk)[-1] \\
H^3(K;\bk)[-4] \rar & \tau_{\geq -2} C \rar\dar & H^2(K;\bk)[-2] \\
 & \vdots
\end{tikzcd}
\end{equation*}
The horizontal compositions represent classes
\begin{equation*}
\begin{aligned}
d_2^t \in & \,\, \Ext^2_{\bk[Q]}(H^t(K;\bk), H^{t-1}(K;\bk)) \\
&= [H^t(K;\bk), H^{t-1}(K;\bk)[2]]_{\mathsf{D}(\bk[Q])}\\
& \cong [H^t(K;\bk)[-t-1], H^{t-1}(K;\bk)[1-t]]_{\mathsf{D}(\bk[Q])}.
\end{aligned}
\end{equation*}
Applying $[\bk, -]_{\mathsf{D}(\bk[Q])}$ to this diagram yields an exact couple and so a spectral sequence: this is the (Lyndon--Hochschild--)Serre spectral sequence for the group extension \eqref{eq:GpExt}, with
$$E_2^{s,t} := [\bk[-s-t], H^t(K;\bk)[-t]]_{\mathsf{D}(\bk[Q])} \cong \Ext^s_{\bk[Q]}(\bk, H^t(K;\bk))$$
and converging to $[\bk[-s-t], C]_{\mathsf{D}(\bk[Q])} \cong H^{s+t}(G;\bk)$. The $d_2$-differentials are tautologically given by Yoneda product with the $d_2^t$'s, giving (i).

For (ii), note that if $M$ is a $\bk[Q]$-module then the filtered object $(\tau_{\geq \bullet} C) \otimes_\bk M$ yields the (Lyndon--Hochschild--)Serre spectral sequence with coefficients in $M$,
$$E_2^{s,t} = H^s(Q; H^t(K;\bk)\otimes_\bk M) \Longrightarrow H^{s+t}(G;M).$$
Thus consider the map between spectral sequences with $M=H_1(K;\bk)$ induced by the pushout extension
\begin{equation*}
\begin{tikzcd}[row sep=small]
1 \rar & K \rar \dar & G \rar \dar & Q \rar \arrow[d, equals]& 1\\
1 \rar & H_1(K;\bk) \rar & E \rar & Q \rar & 1.
\end{tikzcd}
\end{equation*}
Writing $coev \in H^1(H_1(K;\bk) ; \bk) \otimes_\bk H_1(K;\bk) = \mathrm{Hom}_\bZ(H_1(K;\bk) , \bk) \otimes_\bk H_1(K;\bk)$ for the class dual to evaluation, its image under the differential 
$$d_2 : H^0(Q ; H^1(H_1(K;\bk) ; \bk) \otimes H_1(K;\bk) ) \lra H^2(Q ; H_1(K;\bk) )$$
in the spectral sequence for the lower extension is \emph{minus} the class classifying the lower extension \cite[Theorem 4]{HS}. By naturality this class is the composition
$$\bk[-2] \overset{coev}\lra H^1(K;\bk) [-2] \otimes_\bk H_1(K;\bk) \overset{d_2^1 \otimes H_1(K;\bk) }\lra \bk \otimes_\bk H_1(K;\bk),$$
which corresponds to the class $d_2^1$ under the isomorphism $\Ext_{\bk[Q]}^2(H^1(K;\bk), \bk) \cong \Ext_{\bk[Q]}^2(\bk, H_1(K;\bk)) = H^2(Q ; H_1(K;\bk))$.

For (iii) we appeal to the fact that $\mathsf{D}(\bk[Q])$ has an enhancement to the unbounded derived $\infty$-category as constructed in \cite[\S 1.3.5]{HA} by localising the category $\mathsf{Ch}(\bk[Q])$ of chain complexes at the quasi-isomorphisms. This $\infty$-category is stable \cite[Proposition 1.3.5.9]{HA}, and the $t$-structure we have been using refines to an $\infty$-categorical $t$-structure \cite[\S 1.2.1]{HA}. Furthermore, $- \otimes_\bk -$ on $\mathsf{Ch}(\bk[Q])$ induces a symmetric monoidal structure, compatible with the $t$-structure in the sense that the monoidal product of connective objects is connective. Placing ourselves in this setting, we may rely on Hedenlund's thesis \cite[Part II]{HedenlundThesis}. Namely, $C$ defines a (commutative) algebra object in the derived $\infty$-category of $\bk[Q]$, and the Whitehead tower gives a multiplicative filtration of it, so by \cite[Theorem II.1.21]{HedenlundThesis} we have a Leibniz rule in the form of the commutative diagram
\begin{equation*}
\begin{tikzcd}[column sep = 4em]
H^{t'}(K;\bk)[-t'] \otimes H^{t''}(K;\bk)[-t''] \arrow[r, "- \smile -"] \arrow[d, "{d_2^{t'}[-t']} \otimes \mathrm{Id} + \mathrm{Id} \otimes {d_2^{t''}[-t'']}"] & H^{t'+t''}(K;\bk)[-t'-t''] \arrow[d, "{d_2^{t'+t''}[-t'-t'']}"]\\
\parbox{6.5cm}{$H^{t'-1}(K;\bk)[2-t'] \otimes  H^{t''}(K;\bk)[-t''] \oplus H^{t'}(K;\bk)[-t']  \otimes H^{t''-1}(K;\bk)[2-t'']$} \rar{- \smile -}
 & H^{t'+t''-1}(K;\bk)[2-t'-t'']
\end{tikzcd}
\end{equation*}
in $\mathsf{D}(\bk[Q])$. There is a subtlety about signs. Shifting is implemented by $\bk[1] \otimes_\bk -$, so the bottom cup-product map on the second summand actually requires commuting $\bk[1]$ past $H^{t'}(K;\bk)[-t']$ before cupping, incurring a sign $(-1)^{t'}$. When we shift up by $t'+t''$ in order to put this square in the form stated in the lemma, this can be expressed by writing the left-hand vertical map as $d_2^{t'} \otimes \mathrm{Id} + (-1)^{t'} \mathrm{Id} \otimes d_2^{t''}$, and the bottom map as just cupping.
\end{proof}

\begin{remark}
For readers wishing to completely avoid $\infty$-categories, the proof of \cite[Theorem II.1.21]{HedenlundThesis} and the results leading up to it can be implemented in the tensor triangulated category $\mathsf{D}(\bk[Q])$ by making use of the modest additional axioms (TC3)-(TC5) of \cite{MayTriangulated}, which hold in this setting as $\mathsf{D}(\bk[Q])$ is the homotopy category of a monoidal model category satisfying the pushout-product axiom.
\end{remark}

\begin{remark}
For the Serre spectral sequence of a homotopy fibre sequence $F \to E \overset{\pi}\to B$, the analogous conclusion may be obtained by replacing $\mathsf{D}(\bk[Q])$ with the symmetric monoidal stable $\infty$-category $H_B\bk\text{-}\mathsf{mod}$ of modules in parameterised spectra $\mathsf{Sp}_B$ over the constant Eilenberg--MacLane spectrum $H_B\bk$. Then $C := F_B(\Sigma^\infty_B E, H_B\bk)$ is a ring object via the fibrewise diagonal maps $E \to E \times_B E$. Its Whitehead tower gives rise to the Serre spectral sequence for $\pi : E \to B$ and can be analysed in parallel to the above by again invoking Hedenlund's thesis \cite{HedenlundThesis}.
\end{remark}

\subsection{Proof of Proposition \ref{prop:SSbehaviour}}

Recall that we wish to understand the differentials $d_2 : {^{I\!I\!I}E}^{0,*}_2 \to {^{I\!I\!I}E}^{2,*-1}_2$ in the spectral sequence
$$^{I\!I\!I}E^{s,t}_2 = H^s(\SL_n(\bZ/p) ; {H}^t_\mathrm{cts}(\SL_n(\bZ_p, p))) \Longrightarrow H^{s+t}_\mathrm{cts}(\SL_n(\bZ_p) ),$$
associated to the fibration \eqref{eq:EdgeSS}. In Section \ref{sec:AnalyticGps} we explained how to identify $H^t_\mathrm{cts}(\SL_n(\bZ_p, p) ) = \Lambda^t_{\bF_p}[sl_n(\bF_p)^\vee]$ as an $\SL(\bZ/p)$-module, so by Corollary \ref{cor:CalcGps} we have calculated that
\begin{align*}
{^{I\!I\!I}E}^{0,*}_2 &\cong \Lambda^*_{\bF_p}[c_3, c_5, c_7, \ldots]\\
{^{I\!I\!I}E}^{2,*}_2 &\cong \bF_p\{e_1, e_2, e_3, \ldots\} \otimes \Lambda^*_{\bF_p}[c_3, c_5, c_7, \ldots]
\end{align*}
for $* < p$ and all large enough $n$. To evaluate the differential we apply the discussion in Section \ref{sec:SerreSS}. That discussion provides elements
\begin{align*}
d_2^t &\in \mathrm{Ext}^2_{\bF_p[\SL_n(\bZ/p)]}(\Lambda^t_{\bF_p}[sl_n(\bF_p)^\vee], \Lambda^{t-1}_{\bF_p}[sl_n(\bF_p)^\vee]),
\end{align*}
such that $d_2 : H^s(\SL_n(\bZ/p) ; \Lambda^t_{\bF_p}[sl_n(\bF_p)^\vee]) \to H^{s+2}(\SL_n(\bZ/p) ; \Lambda^{t-1}_{\bF_p}[sl_n(\bF_p)^\vee])$ is given by Yoneda product with $d_2^t$. For each $t \geq 3$ odd, we therefore wish to evaluate the composition
$$\bF_p \overset{c_t}\lra \Lambda^t_{\bF_p}[sl_n(\bF_p)^\vee]  \overset{d_2^t}\lra  \Lambda^{t-1}_{\bF_p}[sl_n(\bF_p)^\vee][2]$$
as a morphism in the derived category of $\bF_p[\SL_n(\bZ/p)]$-modules, i.e.\ an element of $\Ext^2_{\bF_p[\SL_n(\bZ/p)]}(\bF_p, \Lambda^{t-1}_{\bF_p}[sl_n(\bF_p)^\vee]) = H^2(\SL_n(\bZ/p); \Lambda^{t-1}_{\bF_p}[sl_n(\bF_p)^\vee])$. The Leibniz rule as described in Lemma \ref{lem:d2} (iii), applied $t$-many times, gives a commutative diagram
\begin{equation*}
\begin{tikzcd}
(sl_n(\bF_p)^\vee)^{\otimes t} \dar \arrow[rrrrr, "\sum_{i=1}^t (-1)^{i-1} \mathrm{Id}^{\otimes i-1} \otimes d_2^1 \otimes \mathrm{Id}^{\otimes t-i}"] & & & & & \bigoplus_{i=1}^t (sl_n(\bF_p)^\vee)^{\otimes t-1}[2] \dar\\
\Lambda^t_{\bF_p}[sl_n(\bF_p)^\vee]  \arrow[rrrrr, "d_2^{t}"]& & & & &  \Lambda^{t-1}_{\bF_p}[sl_n(\bF_p)^\vee][2],
\end{tikzcd}
\end{equation*}
in the derived category of $\bF_p[\SL_n(\bZ/p)]$-modules. Writing $V_n = \bF_p^n$, defining
$$\tilde{d}_2^1 : V_n \otimes V_n^\vee \lra sl(V_n)^\vee \overset{d_2^1}\lra \bF_p[2],$$
and recalling the classes 
$$\tilde{c}_t \in H^0(\SL_n(\bF_p) ; \Lambda^t_{\bF_p}[V_n \otimes V_n^\vee]) = \Ext^0_{\bF_p[\SL_n(\bZ/p)]}(\bF_p, \Lambda^{t}_{\bF_p}[sl_n(\bF_p)^\vee])$$
defined in the proof of Corollary \ref{cor:CalcGps}, the diagram
\begin{equation*}
\begin{tikzcd}
 & (V_n \otimes V_n^\vee)^{\otimes t} \dar \arrow[rrrrr, "\sum_{i=1}^t (-1)^{i-1} \mathrm{Id}^{\otimes i-1} \otimes \tilde{d}_2^1 \otimes \mathrm{Id}^{\otimes t-i}"] & & & & & \bigoplus_{i=1}^t (V_n \otimes V_n^\vee)^{\otimes t-1}[2] \dar\\
\bF_p  \rar{c_t} \arrow[ru, "\tilde{c}_t"]& (sl_n(\bF_p)^\vee)^{\otimes t}  \arrow[rrrrr, "\sum_{i=1}^t (-1)^{i-1} \mathrm{Id}^{\otimes i-1} \otimes d_2^1 \otimes \mathrm{Id}^{\otimes t-i}"] & & & & & \bigoplus_{i=1}^t (sl_n(\bF_p)^\vee)^{\otimes t-1}[2],
\end{tikzcd}
\end{equation*}
commutes in the derived category of $\bF_p[\SL_n(\bZ/p)]$-modules: the triangle commutes by the definition of $c_t$ in terms of $\tilde{c}_t$, and the square commutes because it does so on each summand by definition of $\tilde{d}_2^1$.

\begin{lemma}\label{lem:WhatIsd2}
The element 
\begin{align*}
\tilde{d}_2^1 &\in \mathrm{Ext}^2_{\bF_p[\SL_n(\bZ/p)]}(V_n \otimes V_n^\vee, \bF_p)\\
& \quad\quad \cong H^2(\SL_n(\bZ/p); \mathrm{Hom}(V_n \otimes V_n^\vee, \bF_p))\\
& \quad\quad \cong H^2(\SL_n(\bZ/p); V_n \otimes V_n^\vee) \cong \bF_p\{x_1^{[1]}\} \otimes \bF_p\{\Sigma^{ad}_1\}
\end{align*}
corresponds to $x^{[1]}_1 \otimes (1)$.
\end{lemma}
\begin{proof}
There is a map of fibrations of pro-spaces
\begin{equation*}
\begin{tikzcd}
\{B\SL_n(\bZ/p^r, p)\}_r \dar \rar  & \{B\SL_n(\bZ/p^r)\}_r \dar \rar & B\SL_n(\bZ/p) \arrow[d, equals]\\
B\SL_n(\bZ/p^2, p) \rar & B\SL_n(\bZ/p^2) \rar & B\SL_n(\bZ/p).
\end{tikzcd}
\end{equation*}
It follows from the discussion in Section \ref{sec:AnalyticGps} that the map 
$$H^i(\SL_n(\bZ/p^2, p);\bF_p) \lra H^i_\mathrm{cts}(\SL_n(\bZ_p, p);\bF_p) = \colim_r H^i(\SL_n(\bZ/p^r, p);\bF_p)$$
on cohomology between the fibres is an isomorphism in degrees $i \leq 1$, and in degree 1 this cohomology is $sl_n(\bZ/p)^\vee$. Thus the element $d_2^1 \in \Ext^2_{\bF_p[\SL_n(\bZ/p)]}(sl_n(\bZ/p)^\vee, \bF_p)$ associated by Lemma \ref{lem:d2} to the top fibration sequence is equal to the corresponding element associated to the bottom sequence.

The bottom fibration sequence is described in Remark \ref{rem:Conventionx}. There is an isomorphism $I + p A \mapsto A \, \mathrm{mod}\, p : \SL_n(\bZ/p^2, p) \overset{\sim}\to sl_n(\bZ/p)$, and by Lemma \ref{lem:d2} (ii) the element 
$$d^1_2 \in \mathrm{Ext}^2_{\bF_p[\SL_n(\bZ/p)]}(sl_n(\bZ/p)^\vee, \bF_p) \cong H^2(\SL_n(\bZ/p) ; sl_n(\bZ/p))$$
associated to the bottom fibration sequence is \emph{minus} the class $e_n$ which classifies this abelian extension. In Remark \ref{rem:Conventionx} we defined the class $x = x^{[1]} \otimes (1) \in H^2(\SL_n(\bZ/p); V \otimes V^\vee)$ to be the image of the class $-e_n$ under the natural map $H^2(\SL_n(\bZ/p) ; sl_n(\bZ/p)) \to H^2(\SL_n(\bZ/p) ; V \otimes V^\vee)$. The latter map also sends $d_2^1$ to $\tilde{d}_2^1$ by definition, giving the required identity.
\end{proof}

\begin{lemma}
For $1 \leq i \leq t$ the composition $(\mathrm{Id}^{\otimes i-1} \otimes \tilde{d}_2^1 \otimes \mathrm{Id}^{\otimes t-i}) \circ \tilde{c}_t$ corresponds to 
$$x^{[1]}_{i} \otimes (1, 2, \ldots, t-1) \in  \bF_p\{x_j^{[1]} \, | \, j \in \ul{t-1}\} \otimes \bF_p\{\Sigma_{t-1}^{ad}\} \cong H^2(\SL(V) ; (V \otimes V^\vee)^{\otimes \ul{t-1}}),$$
where $x^{[1]}_t := x^{[1]}_{1}$.
\end{lemma}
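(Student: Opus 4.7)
\smallskip
\noindent\emph{Proof plan.}
The plan is to decompose $\tilde{d}_2^1$ into two geometrically transparent pieces and then reduce the statement to a direct application of Lemma \ref{lem:ev}. By the preceding lemma, $\tilde{d}_2^1$ corresponds to $x^{[1]}_1 \otimes (1) \in H^2(\SL(V); V \otimes V^\vee)$; under the natural duality $H^*(\GL; V \otimes V^\vee) \cong \Ext^*_{\GL}(V,V)$ this is the class $x \in \Ext^2_{\GL}(I,I)$ of Theorem \ref{thm:BokstedtPeriodicity}. Consequently, $\tilde{d}_2^1$ admits the factorisation
$$\tilde{d}_2^1 : V \otimes V^\vee \xrightarrow{\,x \otimes \mathrm{Id}_{V^\vee}\,} V[2] \otimes V^\vee \xrightarrow{\,ev\,} \bF_p[2].$$

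Applying this factorisation on the $i$-th tensor factor, the composition in the statement equals $\epsilon_{i,i} \circ \bigl(\mathrm{Id}^{\otimes i-1} \otimes (x \otimes \mathrm{Id}) \otimes \mathrm{Id}^{\otimes t-i}\bigr) \circ \tilde{c}_t$. The first operation corresponds, in the $\Gamma_{\bF_p}[x]^{\otimes \ul{t}} \otimes \bF_p\{\Sigma^{ad}_{\ul t}\}$-description, simply to replacing the label $x_i^{[0]}$ on the $i$-th strand by $x_i^{[1]}$, since Yoneda multiplication by $x$ on the $i$-th strand matches the divided power multiplication in $\Gamma_{\bF_p}[x]^{\otimes \ul t}$. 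Thus the first step transforms $\tilde{c}_t = 1 \otimes (1,2,\ldots,t)$ into $x_i^{[1]} \otimes (1,2,\ldots,t)$ (with $x_j^{[0]}=1$ for $j \neq i$).

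It then remains to apply Lemma \ref{lem:ev} with $s = t = i$ to this labelled class. Since $\sigma = (1,2,\ldots,t)$ sends $i$ to $i+1 \neq i$, we are in the third case: the binomial coefficient is $\binom{1+0}{1}=1$, the new label function satisfies $\ell'(i-1)=1$ and $\ell'(j)=0$ otherwise, and the permutation $\sigma'$ on $\ul{t}\setminus\{i\}$ is determined by $\sigma'(i-1)=i+1$ together with $\sigma'(j)=j+1$ elsewhere. This is the cycle $(1,2,\ldots,i-1,i+1,\ldots,t)$ on $\ul{t}\setminus\{i\}$, which under the order-preserving bijection $\ul{t}\setminus\{i\}\cong \ul{t-1}$ becomes $(1,2,\ldots,t-1)$, with the $x^{[1]}$-label landing at the position labelled $i-1$ (interpreting $i-1$ cyclically on the cycle $(1,2,\ldots,t-1)$ when $i=1$). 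This gives exactly the claimed class $x^{[1]}_{i-1} \otimes (1,2,\ldots,t-1)$.

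The only genuinely subtle point is the factorisation $\tilde{d}_2^1 = ev \circ (x \otimes \mathrm{Id})$, which relies on recognising $x_1^{[1]} \otimes (1)$ as the class corresponding to Yoneda multiplication by $x$; once this is in hand the remainder is a formal application of Lemma \ref{lem:ev} (equivalently, an unwrapping of the graphical model for the walled Brauer functoriality of Section \ref{sec:wBr}).
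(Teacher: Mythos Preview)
Your proof is correct and uses essentially the same underlying mechanism as the paper's proof—both are computations in the walled Brauer formalism developed in Section \ref{sec:wBr}—but the organisation is genuinely cleaner. The paper identifies the class $\mathrm{Id}^{\otimes i-1} \otimes \tilde{d}_2^1 \otimes \mathrm{Id}^{\otimes t-i}$ as an explicit element of $\Gamma_{\bF_p}[x]^{\otimes \ul{2t-1}} \otimes \bF_p\{\mathrm{Bij}(\ul{2t-1},\ul{2t-1})\}$, cups with $\tilde{c}_t$, and then performs $2t$ simultaneous evaluations, invoking a large figure to track the combinatorics. Your factorisation $\tilde{d}_2^1 = ev \circ (x \otimes \mathrm{Id}_{V^\vee})$ collapses this to a single Yoneda multiplication (which, as you note, is just the relabelling $x_i^{[0]} \mapsto x_i^{[1]}$ built into the definition of $\psi_{S,T}$) followed by a single application of $\epsilon_{i,i}$, for which Lemma \ref{lem:ev} can be quoted directly. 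This avoids the need to write out the $(2t-1)$-element permutation and the accompanying diagram.

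The one point worth being slightly more careful about is the passage from the preceding lemma's identification of $\tilde{d}_2^1$ with $x^{[1]}_1 \otimes (1)$ to the claimed factorisation: you are implicitly using that the chain of isomorphisms $\Ext^2(V \otimes V^\vee, \bF_p) \cong H^2(\SL(V); V^\vee \otimes V) \cong H^2(\SL(V); V \otimes V^\vee) \cong \Ext^2(V,V)$ carries $\tilde{d}_2^1$ to $x$, and that the tensor--hom adjunction realises this as $ev \circ (x \otimes \mathrm{Id}_{V^\vee})$. This is correct (and you flag it as the subtle step), but spelling out that the adjunction $\Ext^*(A \otimes B, C) \cong \Ext^*(A, C \otimes B^\vee)$ is exactly $\phi \mapsto ev \circ (\phi \otimes \mathrm{Id})$ on the nose would make the argument self-contained.
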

\begin{proof}
We wish to evaluate the map
\begin{align*}
H^*(\SL(V);  (V \otimes V^\vee)^{\otimes \ul{t}}) &\otimes H^*(\SL(V) ; \mathrm{Hom}((V \otimes V^\vee)^{\otimes \ul{t}}, (V \otimes V^\vee)^{\otimes \ul{t-1}}))\\
& \lra H^*(\SL(V) ; (V \otimes V^\vee)^{\otimes \ul{t-1}})
\end{align*}
given by cup product followed by evaluation. To do so it is clearest to generalise, and describe the analogous composition map
\begin{equation}\label{eq:BigComposition}
\begin{aligned}
&H^*(\SL(V);  \mathrm{Hom}(V^{\otimes S} \otimes (V^\vee)^{\otimes T}, V^{\otimes A}
 \otimes (V^\vee)^{\otimes B})) \\
 &\quad\quad \otimes H^*(\SL(V) ; \mathrm{Hom}(V^{\otimes A} \otimes (V^\vee)^{\otimes B}, V^{\otimes X} \otimes (V^\vee)^{\otimes Y}))\\
&\quad\quad\quad\quad \lra H^*(\SL(V);  \mathrm{Hom}(V^{\otimes S} \otimes (V^\vee)^{\otimes T}, V^{\otimes X} \otimes (V^\vee)^{\otimes Y}))
\end{aligned}
\end{equation}
in a stable range. Adjoining over gives an isomorphism
$$H^*(\SL(V);  \mathrm{Hom}(V^{\otimes S} \otimes (V^\vee)^{\otimes T}, V^{\otimes A}
 \otimes (V^\vee)^{\otimes B})) \cong H^*(\SL(V);  V^{\otimes A \sqcup T} \otimes (V^\vee)^{\otimes B \sqcup S}),$$
and the latter is identified with $\Gamma_{\bF_p}[x]^{\otimes A \sqcup T} \otimes \bF_p\{\mathrm{Bij}(B \sqcup S, A \sqcup T)\}$ in a stable range by the discussion in Section \ref{sec:FullWBr}. As described in in Section \ref{sec:GraphicalInt}, elements here may be depicted as full walled Brauer diagrams from $(\emptyset,\emptyset)$ to $(A \sqcup T,B \sqcup S)$ where each strand is decorated by an $x^{[i]}$, as shown in Figure \ref{fig:End1New} (a) below. Equivalently, they may be depicted as full walled Brauer diagrams from $(S,T)$ to $(A,B)$ where each strand is decorated by an $x^{[i]}$, as shown in Figure \ref{fig:End1New} (b): the equivalence is given by considering $S$ as being above the wall on the left-hand side, and $T$ as being below the wall on the left-hand side.

\begin{figure}[h]
\begin{center}
\includegraphics{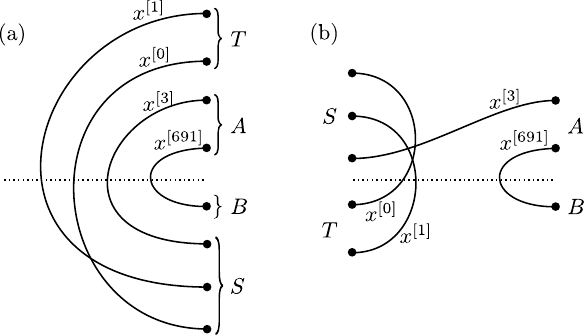}
\end{center}
\caption{(a) Elements of $\Gamma_{\bF_p}[x]^{\otimes A \sqcup T} \otimes \bF_p\{\mathrm{Bij}(B \sqcup S, A \sqcup T)\}$ as depicted in Section \ref{sec:GraphicalInt}. (b) An equivalent but more intuitive depiction; $S$ and $T$ have been moved to the other side of the wall, and rotated.}\label{fig:End1New}
\end{figure}

The latter style of depiction makes the composition map \eqref{eq:BigComposition} more intuitive. In this depiction it is given by the natural analogue of the discussion in Section \ref{sec:GraphicalInt}: concatenate such diagrams, multiply labels on the same strand together using the divided power multiplication on the $x^{[i]}$'s, then set closed components labelled by $x^{[i]}$ with $i>0$ to zero, and those labelled by $x^{[0]}$ to $\dim(V)$.

\begin{figure}[h]
\begin{center}
\includegraphics{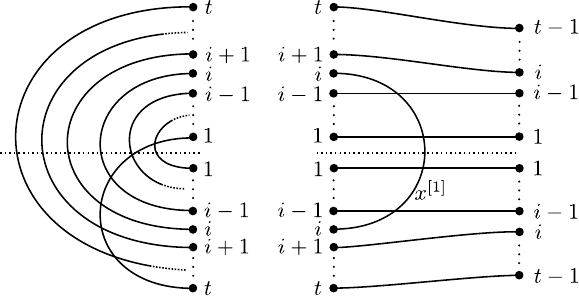}
\end{center}
\caption{The elements $\tilde{c}_t$ and $\mathrm{Id}^{\otimes i-1} \otimes \tilde{d}_2^1 \otimes \mathrm{Id}^{\otimes t-i}$. Unlabelled arcs are implicitly labelled $x^{[0]}$.}\label{fig:Big}
\end{figure}

Returning to the case at hand, the class $\tilde{c}_t \in H^0(\SL(V) ; (V \otimes V^\vee)^{\otimes \ul{t}})$ is represented by the left-hand diagram in Figure \ref{fig:Big}. By Lemma \ref{lem:WhatIsd2} the class $\mathrm{Id}^{\otimes i-1} \otimes \tilde{d}_2^1 \otimes \mathrm{Id}^{\otimes t-i} \in H^2(\SL(V) ; \mathrm{Hom}((V \otimes V^\vee)^{\otimes \ul{t}}, (V \otimes V^\vee)^{\otimes \ul{t-1}}))$ is represented by the right-hand diagram in Figure \ref{fig:Big}. Composing these as described above gives $x_i^{[1]} \otimes (1,2,\ldots, t-1)$, with the convention that $x^{[1]}_t := x^{[1]}_1$.
\end{proof}

In particular, for an odd $t \geq 3$ the class
$$\sum_{i=1}^t (-1)^{i-1} (\mathrm{Id}^{\otimes i-1} \otimes \tilde{d}_2^1 \otimes \mathrm{Id}^{\otimes t-i}) \circ \tilde{c}_t \in H^2(\GL(V) ; (V \otimes V^\vee)^{\otimes t-1})$$
mapped further to
$$H^2(\GL(V) ; \Lambda^{t-1}_{\bF_p}(V \otimes V^\vee)) = \left(\bF_p\{x_i^{[1]} \, | \, i \in \ul{t-1}\} \otimes \bF_p\{\Sigma_{t-1}^{ad}\}\right) \otimes_{\Sigma_{t-1}} \bF_p^-$$
is
$$\left[\sum_{i=1}^t (-1)^{i-1} x^{[1]}_{i} \otimes (1, 2, \ldots, t-1)\right] \otimes_{\Sigma_{t-1}} 1$$
(with $x^{[1]}_{t} := x^{[1]}_{1}$) and it follows, by acting on the $i$th term by the permutation $(1, 2, \ldots, t-1)^{-(i-1)} \in \Sigma_{t-1}$ of sign $(-1)^{i-1}$, that this is the same as
$$t \cdot \left[ x^{[1]}_{1} \otimes (1,2,\ldots, t-1)\right] \otimes_{\Sigma_{t-1}} 1.$$
When we map from $(V \otimes V^\vee)^{\otimes t-1}$ to $(sl(V)^\vee)^{\otimes t-1}$, according to the construction in Corollary \ref{cor:CalcGps} this is the element called $t \cdot e_{t-1}$.

This finishes the proof of Proposition \ref{prop:SSbehaviour}.

\begin{remark}[Stability range]\label{rem:StabRangeThmA}
Corollary \ref{cor:CalcGps} holds as long as $2 \cdot 2 \leq \dim(V)-2-2\cdot(p-1)$, i.e.\ $\dim(V) \geq 2p+4$, using that $\Lambda^i[sl(V)^\vee]$ is a summand of $(V_{[1,1]})^{\otimes i}$ for $i < p$, so is a coefficient system of degree $2i$, and the stability range in Section \ref{sec:stab}. Thus Proposition \ref{prop:SSbehaviour} holds for $n \geq 2p+4$, and so Theorem \ref{thm:ResIsZero} does too (in fact in the case $m>1$ it holds for all $n$). 

The lemma in Section \ref{sec:Ext} requires $H^1(\SL_n(\bZ/p) ; (V \otimes V^\vee)^{\otimes t})$ to be in the stable range for all $t < p-1$, which by the stability range in Section \ref{sec:stab} holds for $2 \cdot 1 \leq n -2 -2(p-2)$, i.e.\ $n \geq 2p$. 

The final ingredient in the proof of Theorem \ref{thm:mainCong} is homological stability for completed cohomology. In \cite{CEStability} an explicit range was not given, but the more recent work of Iwasa \cite[Theorem 1.3]{Iwasa} can be applied in our situation, and after taking (continuous) duals it shows that $\widetilde{H}^i(\SL_{n+1}(\bZ)) \to \widetilde{H}^i(\SL_{n}(\bZ))$ is an isomorphism for $2i \leq n-1$. In particular $\widetilde{H}^i(\SL_{n}(\bZ))$ is in the stable range for all $i<p$ as long as $n \geq 2p-1$. Thus Theorem \ref{thm:mainCong} holds as long as $n \geq 2p+4$.
\end{remark}

\section{Away from $p$}\label{sec:AwayFromChar} 

For completeness let us explain the analogue of Theorem \ref{thm:mainCong} with coefficients coprime to $p$. This is much simpler to analyse. It will be expressed in terms of the fibre of the map
$$\tilde{\kappa} : \SK(\bZ) \lra \SK(\bZ/p^m).$$

\begin{theorem}
Let $\ell$ be a prime different to $p$. In a stable range of homological degrees the group $\SL_n(\bZ/p^m)$ acts trivially on $H^*(\SL_n(\bZ, p^m);\bF_\ell)$, and the latter is isomorphic to $H^*(\Omega^\infty \mathrm{hofib}(\tilde{\kappa}) ; \bF_\ell)$ as an $\bF_\ell$-algebra.
\end{theorem}
\begin{proof}
It follows from work of Charney \cite{CharneyCong} that $H^*(\SL_n(\bZ, p^m) ; \bF_\ell)$ exhibits homological stability. Proposition 5.5  of that paper holds as condition (C1) asks that the inclusion $p^m\bZ \to \bZ$ induces an isomorphism on $\bF_\ell$-homology, which it does, and this provides the input for Theorem 5.2 of that paper. 

It then follows by an argument like that of Lemma \ref{lem:ActTrivOnSL} that $\SL_n(\bZ/p^m)$ acts trivially on $H^*(\SL_n(\bZ, p^m) ; \bF_\ell)$ in the stable range. In the map of fibre sequences
\begin{equation*}
\begin{tikzcd}[column sep=small]
B\SL_n(\bZ, p^m) \rar \dar & \Omega^\infty\mathrm{hofib}( \tilde{\kappa}) \dar\\
B\SL_n(\bZ) \dar \rar& \Omega^\infty \SK(\bZ) \dar{\Omega^\infty \tilde{\kappa}}\\
B\SL_n(\bZ/ p^m) \rar& \Omega^\infty \SK(\bZ/p^m) 
\end{tikzcd}
\end{equation*}
the bottom and middle maps are isomorphisms on homology in a stable range, so using the triviality of the $\SL_n(\bZ/p^m)$-action on $H^*(\SL_n(\bZ, p^m) ; \bF_\ell)$ it follows from the Zeeman comparison theorem that the top map is an isomorphism in a stable range too.
\end{proof}

\begin{corollary}
Let $\ell$ be a prime different to $p$. The map $\SL_n(\bZ, p^m) \to \SL_n(\bZ,p)$ induces an isomorphism on $\bF_\ell$-(co)homology in the stable range.
\end{corollary}
\begin{proof}
The kernel of $\SL_n(\bZ/p^m) \to \SL_n(\bZ/p)$ is a $p$-group so has trivial $\bZ[\tfrac{1}{p}]$-homology, so this map is a $\bZ[\tfrac{1}{p}]$-homology equivalence. Stabilising and plus-constructing, it follows that $\SK(\bZ/p^m)[\tfrac{1}{p}] \overset{\sim}\to \SK(\bZ/p)[\tfrac{1}{p}]$.
\end{proof}

Now Quillen \cite{QuillenFiniteFields} has calculated $H^*(\Omega^\infty \SK(\bZ/p) ; \bF_\ell)$ and $\pi_*(\SK(\bZ/p ; \bZ_\ell))$ completely, so one could perform an analysis similar to that of Section \ref{sec:CompletionMap} to describe $H^*(\SL_n(\bZ, p^m);\bF_\ell)$ in the stable range. We leave this to the interested reader.

\bibliographystyle{amsalpha}
\bibliography{biblio}

\end{document}